\theoremstyle{thmstyleone}%
\newtheorem{theorem}{Theorem}%  meant for continuous numbers
\theoremstyle{thmstyletwo}%
\newtheorem{remark}{Remark}%
\newtheorem{proposition}{Proposition}%
\newtheorem{corollary}{Corollary}
\newtheorem{lemma}{Lemma}
\theoremstyle{thmstylethree}%
\newtheorem{definition}{Definition}%
\newlist{assumption}{enumerate}{1}					%%%% (A1)--(A5)
\setlist[assumption]{label=(\textsc{a}\arabic*)}
\crefname{assumptioni}{Assumption}{Assumptions}
\newlist{assumption2}{enumerate}{1}
\setlist[assumption2]{label=(\textsc{a}6)}			%%% Assumption (A6) for 2nd OS section
\crefname{assumptioni}{Assumption}{Assumptions}
\newlist{assumption3}{enumerate}{1}
\setlist[assumption3]{label=(\textsc{a}6')}			%%% Assumption (A6') for 2nd OS explicit section
\crefname{assumptioni}{Assumption}{Assumptions}
\newcommand{\Rbb}{\mathbb{R}}
\newcommand{\1}{\mathbbm{1}}
\newcommand{\Linop}{\mathbb{L}}
\newcommand{\cone}{\text{cone}}
\newcommand{\norm}[1]{\|#1\|}
\newcommand{\proj}{\textrm{proj}}
\DeclareMathOperator{\Id}{\mathrm{Id}}
\newcommand{\dx}{\,\mathrm{d}x}
\newcommand{\ds}{\,\mathrm{d}s}
\newcommand{\barK}{K}
\newcommand{\Ha}{\mathcal{H}}           %%% Hausdorff measure
\newcommand{\dH}{\,\mathrm{d}\mathcal{H}}
\newcommand{\barr}{{\bar{r}}}
\begin{document}
	\title[Optimality conditions and Lipschitz stability ...]{Optimality conditions and Lipschitz stability for non-smooth semilinear elliptic optimal control problems with sparse controls}
	
	%%=============================================================%%
	%% Prefix	-> \pfx{Dr}
	%% GivenName	-> \fnm{Joergen W.}
	%% Particle	-> \spfx{van der} -> surname prefix
	%% FamilyName	-> \sur{Ploeg}
	%% Suffix	-> \sfx{IV}
	%% NatureName	-> \tanm{Poet Laureate} -> Title after name
	%% Degrees	-> \dgr{MSc, PhD}
	%% \author*[1,2]{\pfx{Dr} \fnm{Joergen W.} \spfx{van der} \sur{Ploeg} \sfx{IV} \tanm{Poet Laureate} 
		%%                 \dgr{MSc, PhD}}\email{iauthor@gmail.com}
	%%=============================================================%%
	
	%\author*[1,2]{\fnm{Vu Huu} \sur{Nhu}}\email{nhu.vuhuu@phenikaa-uni.edu.vn}
	\author*[1]{\fnm{Vu Huu} \sur{Nhu}}\email{nhu.vuhuu@phenikaa-uni.edu.vn}

	%\author[2,3]{\fnm{Second} \sur{Author}}\email{iiauthor@gmail.com}
	\author[1]{\fnm{Phan Quang} \sur{Sang}}\email{sang.phanquang@phenikaa-uni.edu.vn}
	\equalcont{These authors contributed equally to this work.}
	
	%\author[1,2]{\fnm{Third} \sur{Author}}\email{iiiauthor@gmail.com}
	%\equalcont{These authors contributed equally to this work.}
	
	\affil*[1]{\orgdiv{Faculty of Fundamental Sciences}, \orgname{PHENIKAA University}, \orgaddress{\city{Yen Nghia, Ha Dong, Hanoi}, \postcode{12116}, \country{Vietnam}}}

	%\affil[2]{\orgdiv{Department}, \orgname{Organization}, \orgaddress{\street{Street}, \city{City}, \postcode{10587}, \state{State}, \country{Country}}}
	
	%\affil[3]{\orgdiv{Department}, \orgname{Organization}, \orgaddress{\street{Street}, \city{City}, \postcode{610101}, \state{State}, \country{Country}}}
	
	%%==================================%%
	%% sample for unstructured abstract %%
	%%==================================%%
\abstract{%
			This paper is concerned with first- and second-order optimality conditions as well as the stability for non-smooth semilinear optimal control problems 
			%\textcolor{blue}{%text}
			involving the $L^1$-norm
		%} 
			of the control in the cost functional. 
			In addition to the appearance of 
			%\textcolor{blue}{%text}
			the $L^1$-norm 
		%}%
			leading to the non-differentiability of the objective and promoting the sparsity of the optimal controls, the non-smoothness of the nonlinear coefficient in the state equation causes the same property of the control-to-state operator. Exploiting a regularization scheme, we derive $C$-stationarity conditions for any local optimal control. Under a structural assumption on the associated state, we define the curvature functional for the part not 
			%\textcolor{blue}{%text}
			including the $L^1$-norm 
		%}%
			of controls of the objective for which the second-order necessary and sufficient optimality conditions are shown. Furthermore, 
			%\textcolor{blue}{%text}
			under a more restrictive structural assumption imposed on 
		%}%
			the mentioned state, an explicit formulation of the curvature is established and thus the explicit second-order optimality conditions are stated. Finally, the Lipschitz stability of local solutions with respect to the sparsity parameter is shown. %	
}

\keywords{%
	optimal control, non-smooth optimization, piecewise differentiable function, semilinear elliptic partial differential equation, sparse control, optimality condition, curvature functional, stability. %
}

\pacs[MSC Classification]{49K20, 49J20,  35J25, 35J61, 35J91,  90C48}

\maketitle

\small 

\section{Introduction}
In this paper, we consider the following non-smooth semilinear elliptic optimal control problem
\begin{equation}
	\label{eq:P}
	\tag{$P_\kappa$}
	\left\{
	\begin{aligned}
		&\min J_\kappa(u)\\
		& \alpha \leq u(x) \leq \beta \quad \text{{for a.a. }} x \in \Omega,
	\end{aligned}
	\right.
\end{equation}
where $J_\kappa(u) := F(u) + \kappa j(u)$ with $F: L^\infty(\Omega) \to \Rbb$ and $j: L^1(\Omega) \to \Rbb$ defined by
\begin{equation} \nonumber
	F(u) = \int_\Omega L(x,y_u(x)) \dx + \frac{\nu}{2} \int_\Omega |u(x)|^2 \dx \quad \text{and} \quad j(u) = \int_\Omega |u(x)| \dx.
\end{equation}
Here $y_u$ stands for the unique solution in $H^1_0(\Omega) \cap C(\overline\Omega)$ of the state equation
\begin{equation} \label{eq:state}
	\left\{
	\begin{aligned}
		A y + f(y) & = u && \text{in } \Omega, \\
		y &=0 && \text{on } \partial\Omega,
	\end{aligned}
	\right.
\end{equation}
where 
$A$ is the linear operator given by
\begin{equation} \nonumber
	Ay = -\sum_{i,j=1}^{{N}} \partial_{j}[a_{ij}(x) \partial_{i} y] + a_0(x)y
\end{equation} 
with 
%\textcolor{blue}{%text}
$N \geq 1$ 
%}%
and $\partial_i$, $i = 1,\ldots, N$,  standing for the partial derivatives, 
and 
$f: \Rbb \to \Rbb$ is finitely $PC^2$ (continuous and $C^2$ apart from finitely many points) and defined as
\begin{equation} 
	\label{eq:PC1-rep}
	f(t) = \sum_{i =0}^{\barK} \1_{(\tau_{i},\tau_{i+1}]}(t)f_{i}(t)  \quad \text{for all } t \in \Rbb,
\end{equation}
with $-\infty =: \tau_0 < \tau_1 <\cdots <\tau_\barK < \tau_{\barK +1} := \infty$ 
and finitely many $C^2$ functions $f_i$, $0 \leq i \leq \barK$, that are monotonically increasing and satisfy the
%\textcolor{blue}{%
following continuity condition
%}%
at $\tau_i:$ 
\begin{equation} \label{eq:PC1-continuous-cond}
	f_{i-1}(\tau_i) = f_{i}(\tau_i) \quad \text {for all } 1 \leq i \leq \barK.
\end{equation}
Here and in what follows, we use the convention $(t, +\infty] := (t, \infty)$ and the symbol $\1_M$ denotes the characteristic function of a set $M$, i.e., $\1_M(x) = 1$ if $x \in M$ and $\1_M(x) =0$ otherwise. 

\medskip
A typical example of the non-smooth state equation is the following 
\begin{equation}
	\label{eq:max-pde}
	-\Delta  y + \max(y,0)  = u \text{ in } \Omega, \quad y =0  \text{ on } \partial\Omega,
	%\left\{
	%\begin{aligned}
	%	-\Delta  y + \max(y,0) & = u && \text{in } \Omega, \\
	%	y &=0 && \text{on } \partial\Omega,
	%\end{aligned}
	%\right.
\end{equation}
which arises, 
%\textcolor{blue}{%
for instance, 
%}%
in models of the deflection of a stretched thin membrane partially covered by water (see \cite{Kikuchi1984}).

\medskip 

The salient features of the optimal control problem \eqref{eq:P} are, of course, the non-differentiability of the nonlinearity coefficient $f$ in the state equation and the appearance of the $L^1$-norm of the controls. 
The first issue leads to the non-differentiability of the corresponding control-to-state operator; see \cref{prop:control-to-state} below, while the second one brings about the non-smoothness of the objective functional in the control variable and accounts for the sparsity of the optimal control. 
As a consequence, standard techniques based on the second-order differentiability of the control-to-state operators, e.g. as in \cite{CasasHerzogWachsmuth2012,CasasTroltzsch2020}, for deriving the first- and second-order optimality conditions of \eqref{eq:P}  are typically inapplicable, making the analytical and numerical treatment challenging.

\medskip

Let us put this research work into perspective. An optimal control problem with an $L^1$-term in the cost functional was originally investigated in \cite{Stadler2009}, where the state equation is linear. The second-order necessary and sufficient optimality conditions are not needed since this problem is convex. Later,   the semilinear elliptic optimal control problems were analyzed in \cite{CasasHerzogWachsmuth2012} for box control constraints, in \cite{CasasTroltzsch2014,CasasTroltzsch2020} for pointwise state constraints. The numerical analysis 
%\textcolor{blue}{%
was
%}%
also studied in these papers except for \cite{CasasTroltzsch2020}. 
Recently, no-gap second-order optimality conditions for optimization problems with non-uniformly convex integrands were considered in \cite{WachsmuthWachsmuth2022}, where the integral functional is extended to the space of measures and the obtained second-order derivatives contain integrals on $(N-1)$-dimensional manifolds. The results derived in \cite{WachsmuthWachsmuth2022} were applied to bang-off-bang control problems governed by smooth semilinear elliptic PDEs. 
%\textcolor{blue}{%text}
	The appearance of integrals on $(N-1)$-dimensional manifolds in the expression of second-order generalized derivatives was first observed in \cite{ChristofMeyer2019}, 
	where the authors studied the stability of solutions to $H^1_0$-elliptic variational inequalities of the second kind that include a non-differentiable Nemytskii operator.
%	see their second-order Taylor-type expansion for the $L^1$-norm in \cite[Cor.~4.10]{ChristofMeyer2019}.
%}%

\medskip 

Regarding optimal control of non-smooth PDEs without 
%\textcolor{blue}{%text}
the $L^1$-norm
%}%
of the control in the objective, there are comparatively few contributions. 
On the topic related to first-order optimality conditions, we refer to \cite{Constantin2018} for a semilinear elliptic equation,  to \cite{ClasonNhuRosch} for a class of quasilinear elliptic equations, and to \cite{MeyerSusu2017} for semilinear parabolic equations.
For the second-order necessary and sufficient optimality conditions, 
as far as we know a few contributions were addressed by Clason et al.  \cite{ClasonNhuRosch2020} for quasilinear elliptic equations and by Nhu \cite{Nhu2021Optimization} for the specific case \eqref{eq:max-pde}.
In these papers, in order to derive the second-order optimality conditions, a curvature functional of the objective was introduced and this functional is considered as a generalized second-order derivative of the cost functional. Note that, in these contributions, 
%\textcolor{blue}{%text}
no explicit formulas for the curvature functional have
%}%
been established.

%%%% the aim of the paper
\bigskip
The main aim of this paper is to derive the first-order optimality conditions, explicit second-order (necessary and sufficient) optimality conditions, and the stability of local minimizers of \eqref{eq:P}; see \cref{thm:1st-OS,thm:2nd-OS-nec-explicit,thm:2nd-OS-suf-explicit,thm:stability-Lipschitz} below. To achieve this goal, we first obtain $C$-stationarity conditions by following a regularization scheme \cite{Barbu1984}; see, also \cite{Constantin2018,MeyerSusu2017,ClasonNhuRosch}, where the associated adjoint state and multipliers are proven to be uniquely determined for any local optimal control under the assumption that the level sets at $\tau_i$, $1 \leq i \leq K$, of the considered optimal state have measure zero, see \cref{thm:1st-OS} and \cref{cor:projections} below.
In order to derive the second-order optimality conditions, we follow \cite{ClasonNhuRosch2020} to introduce the curvature functional $Q$ of the smooth component $F$ 
in the objective and then combine with the technique in \cite{CasasHerzogWachsmuth2012}.
Under the following  assumption on the structure of 
%\textcolor{blue}{%
	the mentioned state
%}%
$\bar y$ (depending on the parameter $\kappa$):
\begin{equation} 
	\sum_{i =1}^K\lambda^N\left( \{ |\bar y - \tau_i| < \epsilon \} \right) \leq c_s \epsilon \quad \text{for all } \epsilon \in (0,1) \nonumber
\end{equation}
with some constant $c_s >0$, \cref{prop:Q-weak2subder} below shows that the curvature functional is identical to the strong second subderivative of $F$ introduced in \cite[Def.~3.2]{WachsmuthWachsmuth2022}. 
(Here $\lambda^d$ stands for the $d$-dimensional Lebesgue measure and when $d=N$, we simply write $dx := d\lambda^N(x)$.)
%It is noted that in \cite{WachsmuthWachsmuth2022}, the integrand of the objective is required to be convex. 
A second-order Taylor type expansion then gives the second-order conditions for \eqref{eq:P} after proving 
%\textcolor{blue}{%text}
	some key technical limits.
%}% 
Finally, under a more restrictive  condition on the structure of $\bar y$ as follows
\begin{equation*}
	\bar y(x) = \tau_i \quad \implies \quad |\nabla \bar y(x)| \neq 0, \quad  1 \leq i \leq K,
\end{equation*}
an explicit formulation of the curvature functional $Q$ is established and contains the integral on $(N-1)$-dimensional manifolds (see \cref{thm:Q-explicit-form}). Consequently, we derive, respectively, the corresponding second-order necessary and sufficient optimality conditions in the explicit form as follows:
\begin{equation} \nonumber
	Q(\bar u, \bar p; h) \geq 0 \, 
	{%text}
	\forall h\in \mathcal{C}_{L^\barr(\Omega)}({U}_{ad};\bar u)
}%
	\quad \text{and} \quad Q(\bar u, \bar p; h) > 0 \, 
	{%text}
	\forall h\in \mathcal{C}_{L^2(\Omega)}({U}_{ad};\bar u)\setminus \{0\}
}%
\end{equation} 
%\textcolor{blue}{%text}
	for some fixed number $\barr > \frac{N}{2}$ and $\barr \geq 2$, where
%}%
\begin{multline} \label{eq:expclit-Q}
	Q(\bar u, \bar p; h) =\int_\Omega  \frac{\partial^2 L}{\partial y^2}(x, \bar y) (S'(\bar u) h)^2 + \nu h^2 - \1_{\{ \bar y \notin E_f \}} \bar p f''_{yy}( \bar y)(S'(\bar u) h)^2 \dx \\
	 + \sum_{i=1}^K \sigma_i \int_{ \{ \bar y = \tau_i \} } \frac{\bar p (S'(\bar u)h)^2}{|\nabla \bar y|} \dH^{N-1}.   
\end{multline} 
Here $\bar p$ is an adjoint state associated with the control $\bar u$;
$\{ \bar y = \tau_i \} := \{x \in \overline\Omega \mid \bar y(x) = \tau_i \}$; 
$E_f$ is the set of non-smooth points of $f$ and determined in \eqref{eq:non-smooth-points} below;
$\sigma_i$, defined in \eqref{eq:sigma-i}, stands for the  jump of the derivative of $f$ in the singular point $\tau_i$;
%\textcolor{blue}{%text}
$\mathcal{C}_{L^\barr(\Omega)}({U}_{ad};\bar u)$ (depending on $\kappa$) is a cone of critical directions defined as in \eqref{eq:critical-cone};
%}%
$S$ is the control-to-state operator determined later in \cref{sec:standing-assumption};
and 
$\Ha^d$ is the $d$-dimensional Hausdorff measure on $\Rbb^N$ that is scaled as in \cite[Def.~2.1]{Evans1992}.
%\textcolor{blue}{%
These explicit formulas 
%}%
could play a significant role in showing the error estimates for the numerical approximation of \eqref{eq:P}; see, e.g. \cite{ClasonNhuRosch2022,ClasonNhuRosch2023_part2}. Such error estimates will be studied in a follow-up research work. 
%\textcolor{blue}{%text}
	The explicit formulation \eqref{eq:expclit-Q} for the curvature functional $Q$ is an improvement in comparison with the results obtained in \cite{ClasonNhuRosch2020,Nhu2021Optimization,ClasonNhuRosch2022}. 
	There, \cite{ClasonNhuRosch2020,ClasonNhuRosch2022} and \cite{Nhu2021Optimization} considered, respectively, the curvature functional for quasilinear and semilinear elliptic problems. 	
	In \cite{ClasonNhuRosch2020,Nhu2021Optimization}, only the general definition of $Q$ was given  and its explicit formulation was not established.
	In \cite{ClasonNhuRosch2022}, 
	an explicit formulation for the curvature functional in the two-dimensional setting was stated 
 	by exploiting an analysis of level sets, where the domain $\Omega$ is required to be a bounded convex polygonal. 
 	In this paper, we shall formulate \eqref{eq:expclit-Q} in the $N$-dimensional setting with $N$ being any nonzero integer
 	and in the case where the domain has a Lipschitz boundary only.
 	In order to do this, in contrast to the approach employed in \cite{ClasonNhuRosch2022}, we will follow the technique from \cite{ChristofMeyer2019} (also \cite{WachsmuthWachsmuth2022} and \cite{ChristofWachsmuth2018}). We first prove \eqref{eq:expclit-Q} in the lower dimensional space and then validate it in a higher dimensional setting via using a standard partition-of-unity argument.
%}%
Finally, for the study of stability as $\kappa \to 0$, we show in \cref{thm:stability-Lipschitz} that, under a quadratic growth condition for a local minimizer of $(P_0)$ ($\kappa = 0$), the optimal solutions to \eqref{eq:P} converge strongly and the rate of convergence is $O(\kappa)$.   
This result on the Lipschitz stability with respect to (w.r.t.) the sparsity parameter is new.

\medskip 

Finally, let us emphasize that our approach 
%\textcolor{blue}{%text}
	showing the second-order optimality conditions
%}%
is different from the one in \cite{WachsmuthWachsmuth2022} (see, also \cite{ChristofWachsmuth2018}). There, the authors first considered  general optimization problems with a non-uniformly convex and non-smooth integral functional and they then applied the established results to bang-off-bang optimal control problems with smooth elliptic PDEs to obtain the no-gap second-order optimality conditions, which are equivalent to a quadratic growth condition in 
\textcolor{blue}{%text}
the $L^1$-norm.
}% 
In order to do that, a structural assumption imposing on the adjoint state is required; see, Assumption 4.12 in \cite{WachsmuthWachsmuth2022} (and Assumption 6.6 in \cite{ChristofWachsmuth2018}). This condition is essential and plays a significant contribution in proving a so-called \emph{nondegeneracy condition} (see \cite[Lems~2.12 \& 4.9]{WachsmuthWachsmuth2022}). Our approach, by contrast, do not need any structural assumption on the adjoint state.

\bigskip

The organization of the remainder of this paper is as follows. In \cref{sec:standing-assumption}, we introduce the notation,  standing assumptions that shall be employed throughout the whole article, and some preliminary results on the state equation. 
In \cref{sec:1stOS}, the first-order optimality conditions are established, 
%\textcolor{blue}{%text}
in which 
%}%
in addition to the existence of the corresponding adjoint state, two other  multipliers exist. One of them comes from the subdifferential in the sense of convex analysis of the $L^1$-norm functional $j$ and the other pointwise belongs to Clarke’s subdifferential of the non-smooth coefficient $f$. 
\cref{sec:2ndOS} is devoted to the second-order optimality conditions for \eqref{eq:P}. The definition and some required properties of the curvature functional of $F$ 
are stated in \cref{sec:2nd-OC:curvature}. 
Second-order conditions that are based on the curvature functional are presented in \cref{sec:2ndOS-based-Q}, while the explicit forms of these conditions  are provided in \cref{sec:2ndOS-explicit-form}. Finally, the stability of optimal solutions is stated in \cref{sec:stability}.
%Finally, the paper ends with an appendix which proves some needed properties of the control-to-state operator.

\section{Notation, standing assumptions and preliminary results} \label{sec:standing-assumption}

\paragraph*{Notation}
For a given point $u$ in a Banach space $X$ and a number $\rho>0$, symbols $B_X(u,\rho)$ and $\overline B_X(u,\rho)$ denote the open and closed balls of radius $\rho$ centered at $u$, respectively. 
For Banach spaces $X$ and $Y$, symbols $X \hookrightarrow Y$ and $X \Subset Y$ stand, respectively, for the canonical continuous embedding and the canonical compact embedding of $X$ in $Y$; the notation $\Linop(X,Y)$ denotes the Banach space of all linear continuous operators from $X$ to $Y$. 
By $c_0^+$, we denote the set of all positive sequences that converge to zero. For any function $y=y(x)$, symbols $y^+$ and $y^{-}$ stand, respectively, for the positive and negative parts of $y$, that is, $y^+(x) := \max\{0, y(x)\}$ and $y^{-}(x) := \max\{0, -y(x) \}$ for all $x$. 
The symbol $\partial_C f$ 
%\textcolor{blue}{%text}
	denotes the Clarke subdifferential 
%}%
of a locally Lipschitz continuous function $f$, while for a convex function $g$ the notation $\partial g$ denotes the subdifferential in the sense of convex analysis.
For a function $g:\Omega\to \Rbb$ and a subset $M \subset \Rbb$, 
%\textcolor{blue}{%text}
 	we put $\{ g \in M \} := \{ x \in \Omega \mid g(x) \in M\}$.
  	Analogously, for given functions $g_1,g_2$ and subsets $M_1, M_2 \subset \Rbb$, we set
  	$\{ g_1 \in M_1, g_2 \in M_2 \} := \{ g_1 \in M_1\} \cap \{g_2 \in M_2 \}$.
%}%
Finally, by $C$ we denote a generic positive constant, which might be different at different places of occurrence. We also write, e.g., $C_\xi$ for a constant dependent only on the parameter $\xi$.

\medskip

Throughout the paper, we need the following standing assumptions.
\begin{assumption} \label{ass:standing}
	%\leavevmode
	%\begin{itemize}
	\item \label{ass:domain}
		$\Omega \subset \Rbb^N$, 
		{%text}
		$N \geq 1$, 
	}%
		is a bounded domain with a Lipschitz boundary $\partial \Omega$. 
	\item \label{ass:Tikhonov-para}
		There hold $\nu > 0$, $\kappa \geq 0$, and $- \infty < \alpha < 0 < \beta < \infty$.
		
	\item \label{ass:nonsmooth-coefficient}
		{%text}
		Function $f$ is not differentiable at $\tau_i$, $1 \leq i \leq K$, that is,
		\begin{equation}
			\label{eq:sigma-i}
			\sigma_i := f'_{i-1}(\tau_i) - f'_{i}(\tau_i) \neq 0 \quad \text{for all} \quad 1 \leq i \leq K.
		\end{equation}	
	}%	
	\item \label{ass:ellipticity}
		The coefficients of $A$ admit the following regularity properties: $a_0 \in L^\infty(\Omega)$, $a_0(x) \geq 0$ for a.a. $x \in \Omega$, $a_{ij} \in L^\infty(\Omega)$, and 
		\begin{equation} \nonumber
			\sum_{i,j=1}^N a_{ij}(x) \xi_i\xi_j \geq \Lambda |\xi|^2 \quad \text{for a.a. } x \in \Omega, \, \text{for all } \xi \in \Rbb^N, \, \text{and for some } \Lambda >0.
		\end{equation} 	
	\item \label{ass:integrand}
	The function $L: \Omega \times \Rbb \to \Rbb$ is Carath\'{e}odory of class $C^2$ w.r.t. the second variable  such that $L(\cdot,0) \in L^1(\Omega)$. Moreover, for any $M>0$, functions $\phi_M \in L^2(\Omega)$, $\psi_M \in L^1(\Omega)$ exist and satisfy
	\begin{align*}
		&| \frac{\partial L}{\partial y} (x,y)| \leq \phi_M(x), \quad |\frac{\partial^2 L}{\partial y^2}(x,y)| \leq \psi_M(x), \\
		&|\frac{\partial^2 L}{\partial y^2}(x,y_1) -\frac{\partial^2 L}{\partial y^2}(x,y_2) | \leq \psi_M(x)|y_1 - y_2|
	\end{align*}
	for a.a. $x \in \Omega$ and for all $y, y_1, y_2 \in \Rbb$ satisfying $|y|, |y_1|, |y_2| \leq M$.

	%\end{itemize}
\end{assumption}

A standard for the choice of $L$ is the quadratic function
\begin{equation*}
	L(x,y) = \frac{1}{2}(y- y_d(x))^2 \quad \text{with $y_d \in L^2(\Omega)$.}
\end{equation*}

\medskip
We now consider the state equation \eqref{eq:state}.
%Recall the state equation \eqref{eq:state}
%\begin{equation} \tag{\ref{eq:state}}
%	A y + f(y) = u \, \text{in } \Omega, \quad 	y =0 \, \text{on } \partial\Omega, \quad u \in L^2(\Omega).
	%\left\{
	%\begin{aligned}
	%	A y + f(y) & = u && \text{in } \Omega, \\
	%	y &=0 && \text{on } \partial\Omega
	%\end{aligned}
	%\right.
%\end{equation}
%\textcolor{blue}{%text}
Let $\barr > \frac{N}{2}$ be arbitrary, but fixed. 
%}%
From \cite[Thm.~4.7]{Troltzsch2010}, for each $u\in L^\barr(\Omega)$, there exists a unique solution $y_u \in H^1_0(\Omega) \cap C(\overline\Omega)$ of \eqref{eq:state}. By $S$ we now denote the solution operator of \eqref{eq:state} as a function from $L^\barr(\Omega)$ to $H^1_0(\Omega) \cap C(\overline\Omega)$. Since $L^\barr(\Omega) \Subset W^{-1,p}(\Omega)$ for some $p >N$, $S$ is weakly-to-strongly continuous; see, e.g., \cite[Thm.~2.2]{CasasTroltzsch2009}.

\medskip 

We now denote by $E_f$ the exceptional set of all non-differentiability points of $f$, that is,
\begin{equation}
	\label{eq:non-smooth-points}
	E_{f} := \left\{ \tau_1, \tau_2,\ldots,\tau_\barK\right\}.
\end{equation}
%Minding \eqref{eq:PC1-rep}, these terms measure the jump of the derivative of $f$ in the singular points $\tau_i$ and will play a crucial part in the second-order optimality conditions for \eqref{eq:P}.
Obviously, $f$ is directionally differentiable and its directional derivative is given by
\begin{equation}
	\label{eq:f-dir-der}
	f'(t; h) =  \left\{
	\begin{aligned}
		& \sum_{i=0}^{\barK}  \1_{(\tau_{i},\tau_{i+1})}(t)f_i'(t)h && \text{if } t \notin E_f,\\
		& \1_{(0, \infty)}(h) f_{i}'(\tau_{i})h + \1_{(-\infty, 0)}(h) f_{i-1}'(\tau_{i})h && \text{if } t = \tau_i, 1 \leq i \leq K 
	\end{aligned}
	\right.
\end{equation}
%\begin{multline} \label{eq:f-dir-der}
	%f'(t; h) = 
	%\sum_{i=0}^{\barK}  \1_{(\tau_{i},\tau_{i+1})}(t)f_i'(t)h  \\
	%\begin{aligned}
	%	& +\sum_{i=1}^{\barK}  \1_{\{\tau_{i}\}}(t) \left[ \1_{(0, \infty)}(h) f_{i}'(\tau_{i})h + \1_{(-\infty, 0)}(h) f_{i-1}'(\tau_{i})h\right]
	%\end{aligned}
%\end{multline}
%for some $1 \leq i \leq K$
with noting that $\tau_0 = - \infty$ and $\tau_{K+1} = +\infty$. 

\medskip
%\textcolor{blue}{%text} 
From now on, let $\barr$ be an arbitrary, but fixed number satisfying
\begin{equation}
	\label{eq:barr-choice}
	\barr > \frac{N}{2} \quad \text{and} \quad \barr \geq 2.
\end{equation}
%}%
The next proposition states the Lipschitz continuity and the directional differentiability of the control-to-state operator $S$. 
%\textcolor{blue}{%text}
	Its proof is similar to that of
%}%
\cite[Prop.~3.1]{Nhu2021Optimization} and  thus omitted here.
%%%and is presented in \ref{sec:appendix}. 

\begin{proposition} \label{prop:control-to-state}
	The control-to-state mapping $S: L^\barr(\Omega) \to H^1_0(\Omega) \cap C(\overline\Omega)$, $u \mapsto y$, associated with the state equation \eqref{eq:state} satisfies the following assertions:
	\begin{enumerate}[label=(\roman*)]
		\item \label{it:Lipschitz-app} 
		%\textcolor{blue}{%}
		$S$ is globally Lipschitz continuous; 
	%}%
		%Moreover, $S$ is globally Lipschitz continuous as a function from $L^\barr(\Omega)$ to $H^1_0(\Omega)$;
		\item \label{it:dir-der-app} $S$ is Hadamard directionally differentiable at any $u \in L^\barr(\Omega)$ in any direction $h \in L^\barr(\Omega)$. Moreover, for any $u,h \in L^\barr(\Omega)$, a $S'(u;h) \in H^1_0(\Omega) \cap C(\overline\Omega)$ exists and satisfies
		\begin{equation} \label{eq:weak-strong-Hadamard-app}
			\frac{S(u + t_k h_k) - S(u)}{t_k} \to S'(u;h) \quad \text{in } H^1_0(\Omega) \cap C(\overline\Omega) 
		\end{equation}
		when $h_k \rightharpoonup h$ in $L^\barr(\Omega)$ and $t_k \to 0^+$.
		Furthermore, $\delta_h := S'(u;h)$ uniquely solves 
		\begin{equation}
			\label{eq:dir-der-app}
			A \delta_h + f'(S(u);\delta_h)  = h \, \text{in } \Omega, \quad \delta_h =0 \, \text{on } \partial\Omega.
			%\left \{
			%\begin{aligned}
			%	A \delta_h + f'(S(u);\delta_h) & = h && \text{in } \Omega, \\
			%	\delta_h &=0 && \text{on } \partial\Omega.
			%\end{aligned}
			%\right.
		\end{equation}
		Here with a little abuse of notation, we denote by $f'(S(u);\delta_h)$ the superposition operator;
		\item \label{it:max-pri-app} $S'(u;h)$ fulfills the maximum principle, i.e., 
		\[
			h \geq 0 \, \text{a.a. in } \Omega  \, \implies \, S'(u;h) \geq 0 \, \text{a.a. in } \Omega;
		\]
		\item \label{it:wlsc-app} $S'(u;\cdot)$ is weakly-to-strongly continuous.%, i.e., 
		%as function from $L^\barr(\Omega)$ to $H^1_0(\Omega) \cap C(\overline\Omega)$, i.e.,
%		\begin{equation*}
%			h_n \rightharpoonup h \quad \text{in } L^\barr(\Omega) \quad \implies \quad S'(u; h_n) \to S'(u;h)  \quad \text{in } H^1_0(\Omega) \cap C(\overline\Omega).
%		\end{equation*}
	\end{enumerate} 
\end{proposition}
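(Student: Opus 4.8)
The plan is to treat the four assertions in turn, using throughout the two structural facts that each $f_i$ is increasing---so $f$ is monotone and $(f(s)-f(t))(s-t)\ge 0$---and that $f$ is Lipschitz on bounded sets, being continuous and piecewise $C^2$. For \ref{it:Lipschitz-app}, given $u_1,u_2$ in a bounded subset of $L^2(\Omega)$ with states $y_1,y_2$, I would subtract the two state equations and test the difference with $y_1-y_2$; ellipticity of $A$ together with monotonicity of $f$ yields $\|y_1-y_2\|_{H^1_0}\le C\|u_1-u_2\|_{L^2}$. For the $C(\overline\Omega)$-estimate I would regard $w:=y_1-y_2$ as the solution of the \emph{linear} equation $Aw+c\,w=u_1-u_2$ with $c:=(f(y_1)-f(y_2))/(y_1-y_2)\ge 0$ (and $c:=0$ where $y_1=y_2$); since $0\le c\in L^\infty(\Omega)$ and $u_1-u_2\in L^2(\Omega)$ with $2>N/2$ for $N\le 3$, Stampacchia's $L^\infty$-estimate for linear elliptic equations gives $\|w\|_{C(\overline\Omega)}\le C\|u_1-u_2\|_{L^2}$.

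For \ref{it:dir-der-app} I would first show that the linearized problem \eqref{eq:dir-der-app} is uniquely solvable: for fixed $y=S(u)$ the superposition map $z\mapsto f'(y;z)$ is monotone and positively homogeneous (its pointwise values are $f_i'(y)z$ off the kink set and the continuous piecewise-linear expression in \eqref{eq:f-dir-der} on $\{y=\tau_i\}$), so $A+f'(y;\cdot)$ is a coercive, continuous, monotone operator on $H^1_0(\Omega)$ and the Browder--Minty theorem applies. To prove the convergence \eqref{eq:weak-strong-Hadamard-app}, set $y_k:=S(u+t_kh_k)$ and $z_k:=(y_k-y)/t_k$, and derive from the difference equation and monotonicity a uniform $H^1_0$-bound, then a uniform $C(\overline\Omega)$-bound via the linear $L^\infty$-theory as in \ref{it:Lipschitz-app}. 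Passing to a subsequence, $z_k\rightharpoonup z$ in $H^1_0(\Omega)$ and $z_k\to z$ in $L^2(\Omega)$ and a.e.; the crux is to identify $\lim_k (f(y+t_kz_k)-f(y))/t_k=f'(y;z)$. This I would do pointwise: where $y(x)\notin E_f$ the value $y(x)$ lies in an open interval on which $f$ is $C^1$, so a mean-value argument yields $f_i'(y(x))z(x)$; where $y(x)=\tau_i$ the sign of $z_k(x)$ selects the one-sided derivative $f_i'(\tau_i)$ or $f_{i-1}'(\tau_i)$, reproducing exactly \eqref{eq:f-dir-der}, while the Lipschitz bound $|(f(y+t_kz_k)-f(y))/t_k|\le L|z_k|$ supplies the $L^2$-domination needed to pass to the limit in the weak formulation. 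Uniqueness then forces $z=\delta_h$, so the whole sequence converges, and I upgrade to \emph{strong} convergence in $H^1_0(\Omega)\cap C(\overline\Omega)$ by testing the difference of the limit and approximate linearized equations and re-using the $L^\infty$-estimate.

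The main obstacle is precisely this pointwise identification of the limit of the difference quotients on the level sets $\{y=\tau_i\}$, where the sign of $z_k(x)$ governs which branch of $f'$ is activated; controlling the interaction between $t_k\to 0^+$, the weak-to-a.e.\ convergence of $z_k$, and the jump in $f'$ at each $\tau_i$ is the delicate point, and it is here that the continuity \eqref{eq:PC1-continuous-cond} and the monotonicity of the pieces are essential.

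For \ref{it:max-pri-app}, with $w:=S'(u;h)$ solving \eqref{eq:dir-der-app} and $h\ge 0$, I would test with $w^-\in H^1_0(\Omega)$: ellipticity gives $\langle Aw,w^-\rangle\le-\Lambda\|\nabla w^-\|_{L^2}^2$, while on $\{w<0\}$ the directional derivative satisfies $f'(y;w)\le 0$ (each $f_i'\ge 0$), so $\int_\Omega f'(y;w)\,w^-\le 0$; since $\int_\Omega h\,w^-\ge 0$, the identity $\langle Aw,w^-\rangle+\int_\Omega f'(y;w)\,w^-=\int_\Omega h\,w^-$ forces $\|\nabla w^-\|_{L^2}=0$, i.e.\ $w\ge 0$. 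Finally, for \ref{it:wlsc-app} I would run the same compactness-and-uniqueness argument as in \ref{it:dir-der-app}: for $h_n\rightharpoonup h$ the solutions $\delta_n:=S'(u;h_n)$ of \eqref{eq:dir-der-app} are uniformly bounded in $H^1_0(\Omega)\cap C(\overline\Omega)$, every subsequence admits a further subsequence converging weakly in $H^1_0(\Omega)$ and strongly in $C(\overline\Omega)$ to a solution of the linearized problem with datum $h$, which by uniqueness equals $S'(u;h)$, and the strong $H^1_0$-convergence again follows from the monotone structure together with the linear $L^\infty$-estimate.
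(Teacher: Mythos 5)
Your proposal is correct, and for assertions \ref{it:Lipschitz-app}--\ref{it:max-pri-app} it follows essentially the paper's own route: the energy estimate plus Stampacchia's $L^\infty$-bound for \ref{it:Lipschitz-app}; for \ref{it:dir-der-app}, uniform bounds on the difference quotients, compactness, pointwise a.e.\ identification of $\lim_k (f(y+t_kz_k)-f(y))/t_k$ with $f'(y;z)$ under Lipschitz domination, uniqueness of the limit, and then the $C(\overline\Omega)$-upgrade via Stampacchia together with the compact embedding $L^2(\Omega)\Subset W^{-1,p}(\Omega)$, $p>N$ (the paper outsources the $H^1_0$-part of this to the cited works of Clason--Nhu--R\"osch and Meyer--Susu and only details the $L^\infty$-upgrade, so your self-contained sketch of the branch-selection argument on $\{y=\tau_i\}$, including your Browder--Minty treatment of unique solvability of \eqref{eq:dir-der-app}, fills in exactly what those references supply); and for \ref{it:max-pri-app} the identical test with $\delta_h^-$ exploiting nonnegativity of the one-sided slopes. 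The one place you genuinely diverge is \ref{it:wlsc-app}: you run a soft compactness-and-uniqueness argument (uniform bound, weak-$H^1_0$/a.e.\ limit, identification via pointwise continuity of $s\mapsto f'(t;s)$ and domination, then strong convergence from monotonicity and Stampacchia), whereas the paper argues directly, decomposing $f'(y_u;\delta_k)-f'(y_u;\delta)$ into three pieces $B_k^1,B_k^2,B_k^3$, checking the sign of each against $(\delta_k-\delta)^{\pm}$, and testing with the positive and negative parts to obtain the quantitative estimate $\norm{\delta_k-\delta}_{H^1_0(\Omega)}\leq C\norm{h_k-h}_{H^{-1}(\Omega)}$, from which strong convergence follows by $L^2(\Omega)\Subset H^{-1}(\Omega)$. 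Both are valid: the paper's version buys an explicit Lipschitz estimate for $h\mapsto S'(u;h)$ from $H^{-1}(\Omega)$ to $H^1_0(\Omega)$, which is stronger and reusable, while yours is more elementary and avoids the delicate sign analysis on the level sets $\{y_u=\tau_i\}$. One small ordering slip in your \ref{it:wlsc-app}: you claim the subsequence converges strongly in $C(\overline\Omega)$ before the limit is identified, but a uniform $C(\overline\Omega)$-bound alone does not give this; the clean order is to identify the limit via the $L^2$/a.e.\ convergence and uniqueness first, and only then obtain $C(\overline\Omega)$-convergence from Stampacchia, as your closing sentence in fact does.
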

Next, we define a solution operator of linear elliptic PDEs. 
\begin{definition} \label{def:sol-oper-adjoint}
	Given a function $\chi \in L^\infty(\Omega)$ with $\chi \geq 0$ a.a. in $\Omega$, the operator $G_\chi \in \Linop(L^\barr(\Omega), H^1_0(\Omega) \cap C(\overline\Omega) )$  maps any $h \in L^\barr(\Omega)$ to $z =: G_\chi h  \in H^1_0(\Omega) \cap C(\overline\Omega)$, where $z$ uniquely solves the equation $A z  + \chi z  = h$ in $\Omega$, $z =0$ on $\partial\Omega$.
%	\begin{equation*} %\label{eq:G-oper}
%		A z  + \chi z  = h \, \text{in } \Omega, \quad 	z =0 \, \text{on } \partial\Omega.
%		%\left\{
%		%\begin{aligned}
%		%	A z  + \chi z & = h && \text{in } \Omega, \\
%		%	z &=0 && \text{on } \partial\Omega.
%		%\end{aligned}
%		%\right.
%	\end{equation*} 
\end{definition}
From now on, let us fix $\epsilon_0$ such that
\begin{equation}
	\label{eq:epsilon-zero}
	0 < \epsilon_0 < \frac{\tau_{i+1}- \tau_i}{2} \quad \text{for all} \quad 1 \leq i \leq K -1.
\end{equation}
%Then for any $y \in C(\overline\Omega)$, the sets $\{ |y - \tau_i| < \epsilon_0 \}$, $1 \leq i \leq K$, are open in $\Omega$ and mutually disjoint.

The following proposition presents a precise characterization of points at which the control-to-state operator $S$ is G\^{a}teaux-differentiable. 
%This will be of major importance for the study of the second-order optimality condition in the next section.
\begin{proposition}[{cf. \cite[Cor.~2.3]{Constantin2018}}]
	\label{prop:G-diff-control2state}
	The control-to-state operator $S: L^\barr(\Omega) \to H^1_0(\Omega) \cap C(\overline\Omega)$ is G\^{a}teaux-differentiable 
	%\textcolor{blue}{%text}
	at $u$ 
%}%
	if and only if 
	\begin{equation}
		\label{eq:zero-measure}
		\lambda^N(\{ S(u) \in E_f\}) =0.
	\end{equation}
	Moreover, under the condition \eqref{eq:zero-measure}, the directional derivative $\delta_h := S'(u;h)$ in a direction $h \in L^\barr(\Omega)$ is determined as 
	$\delta_h = G_{\chi}h$ with $\chi(x) = \1_{\{y_u \notin E_f \}}(x) f'(y_u(x))$ for a.a. $x \in \Omega$ and for $y_u :=S(u)$.	
\end{proposition}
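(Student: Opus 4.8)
The plan is to exploit that, by \cref{prop:control-to-state}\ref{it:dir-der-app}, the directional derivative $h \mapsto S'(u;h) =: \delta_h$ already exists in every direction and depends continuously on $h$; consequently $S$ is G\^{a}teaux-differentiable at $u$ precisely when this map is \emph{linear}. The sole obstruction to linearity is the kink of the superposition term $f'(S(u);\cdot)$ in the defining equation \eqref{eq:dir-der-app}: by \eqref{eq:f-dir-der}, $r \mapsto f'(t;r)$ is linear exactly when $t \notin E_f$, whereas at $t = \tau_i$ it switches slope from $f'_{i-1}(\tau_i)$ to $f'_i(\tau_i)$ with gap $\sigma_i \neq 0$. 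Since this dichotomy is driven pointwise by the level sets $\{S(u) = \tau_i\}$, their Lebesgue measure is the decisive quantity, which is why the criterion takes the form \eqref{eq:zero-measure}.

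For the sufficiency (and simultaneously the ``moreover'' part) I would assume \eqref{eq:zero-measure}. Then $S(u)(x) \notin E_f$ for a.a.\ $x$, so for a.a.\ $x$ the map $r \mapsto f'(S(u)(x);r) = f'(S(u)(x))\,r$ is linear, and hence $f'(S(u);\delta_h) = \chi\,\delta_h$ a.e.\ with $\chi = \1_{\{y_u \notin E_f\}} f'(y_u)$. Because each $f_i$ is increasing we have $f'_i \geq 0$, so $\chi \geq 0$, and $\chi \in L^\infty(\Omega)$ since $y_u = S(u)$ is bounded; thus $G_\chi$ from \cref{def:sol-oper-adjoint} is well defined. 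Equation \eqref{eq:dir-der-app} then reduces to the \emph{linear} problem $A\delta_h + \chi \delta_h = h$, i.e.\ $\delta_h = G_\chi h$, which is linear and bounded in $h$. This establishes G\^{a}teaux-differentiability with $S'(u) = G_\chi$ and yields the stated formula for $S'(u;h)$.

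For the necessity I would argue by contraposition. Suppose $\lambda^N(\{S(u) \in E_f\}) > 0$; as $E_f$ is finite, $M_i := \{S(u) = \tau_i\}$ has positive measure for some $1 \leq i \leq K$. Taking $h \equiv 1 \in L^2(\Omega)$, \cref{prop:control-to-state}\ref{it:max-pri-app} gives $\delta_1 \geq 0$; resolving the kink in \eqref{eq:f-dir-der} with $\delta_1 \geq 0$ shows $\delta_1$ solves the linear equation $A\delta_1 + c\,\delta_1 = 1$ with $c := \1_{\{S(u) \notin E_f\}} f'(S(u)) + \sum_{j=1}^{K} \1_{\{S(u) = \tau_j\}} f'_j(\tau_j) \in L^\infty(\Omega)$, $c \geq 0$, so $\delta_1 = G_c 1$ and the strong maximum principle yields $\delta_1 > 0$ a.e.\ in $\Omega$, in particular on $M_i$. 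If $S$ were G\^{a}teaux-differentiable, linearity of $S'(u;\cdot)$ would force $\delta_{-1} = -\delta_1$; writing the weak form of \eqref{eq:dir-der-app} for $h = 1$ and $h = -1$, substituting $\delta_{-1} = -\delta_1$, and subtracting (so that $A$ and the right-hand sides cancel) would give $f'(S(u);\delta_1) + f'(S(u);-\delta_1) = 0$ a.a.\ in $\Omega$. Evaluating the left-hand side on $M_i$, where $\delta_1 > 0$, produces $f'_i(\tau_i)\delta_1 + f'_{i-1}(\tau_i)(-\delta_1) = -\sigma_i \delta_1$, whence $\sigma_i \delta_1 = 0$ a.e.\ on $M_i$; since $\sigma_i \neq 0$ this contradicts $\delta_1 > 0$ on the positive-measure set $M_i$. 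Hence $S$ is not G\^{a}teaux-differentiable, which proves the contrapositive.

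The routine portions are the sufficiency computation and the identification of the superposition term with $\chi\,\delta_h$. I expect the crux to be the necessity direction, where the main obstacle is upgrading the weak sign information $\delta_1 \geq 0$ furnished by the maximum principle to strict positivity $\delta_1 > 0$ a.e.\ (via the strong maximum principle for $A + c$ with $0 \leq c \in L^\infty(\Omega)$), and then extracting, through \eqref{eq:f-dir-der} and the nonvanishing jumps $\sigma_i$, the pointwise contradiction localized on the level set $M_i$.
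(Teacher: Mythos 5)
Your proof is correct, and its sufficiency half (including the identification $\delta_h = G_\chi h$ with $\chi = \1_{\{y_u \notin E_f\}}f'(y_u)$) coincides with the paper's argument. The necessity half, however, takes a genuinely different route. Both proofs exploit the same consequence of linearity: $\delta_{-h} = -\delta_h$, so that combining the two equations \eqref{eq:dir-der-app} kills the operator $A$ and the right-hand sides and leaves the pointwise identity $f'(y_u;\delta_h) + f'(y_u;-\delta_h) = -\sum_{i}\1_{\{\tau_i\}}(y_u)\,|\delta_h|\,\sigma_i = 0$. The difference lies in how one manufactures a direction whose derivative is strictly positive a.e.\ on a positive-measure level set $\{y_u = \tau_{i_0}\}$. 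The paper works backwards: it chooses a smooth bump $\varphi_{i_0} > 0$ on $\{|y_u - \tau_{i_0}| < \epsilon_0\}\setminus\partial\Omega$ and \emph{defines} $h := A\varphi_{i_0} + f'(y_u;\varphi_{i_0})$, so that by uniqueness $\delta_h = \varphi_{i_0}$ is positive exactly where needed; this is entirely elementary. You instead take $h \equiv 1$, correctly resolve the kink using $\delta_1 \geq 0$ (from \cref{prop:control-to-state}\ref{it:max-pri-app}) to write $\delta_1 = G_c 1$ with $c := \1_{\{y_u\notin E_f\}}f'(y_u) + \sum_j \1_{\{y_u=\tau_j\}}f'_j(\tau_j) \geq 0$, and then upgrade to $\delta_1 > 0$ a.e.\ via the strong maximum principle. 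This is valid, but note what it costs: since $a_{ij}$, $a_0$, and $c$ are merely $L^\infty$, strict positivity cannot be obtained from the classical Hopf-type maximum principle; it requires the weak Harnack inequality of De Giorgi--Nash--Moser for divergence-form operators with measurable coefficients, together with connectedness of $\Omega$. So your argument trades the paper's explicit bump-function construction for a heavier (though standard) piece of elliptic machinery; the paper's route is more self-contained, while yours has the small aesthetic advantage of using a single canonical direction $h\equiv 1$ that handles all level sets $\{y_u = \tau_i\}$ simultaneously. One slip of wording: to cancel $A$ and the right-hand sides you must \emph{add} the equations for $h = 1$ and $h = -1$ (as the paper does), not subtract them; the identity you then state is nevertheless the correct one.
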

\begin{proof}
	Assume that \eqref{eq:zero-measure} is fulfilled. For any $h \in L^\barr(\Omega)$, $\delta_h := S'(u;h)$ satisfies \eqref{eq:dir-der-app}. According to \eqref{eq:f-dir-der}, there holds
	\begin{align*}
		f'(y_u;\delta_h) &=  \sum_{i=0}^{\barK}  \1_{(\tau_{i},\tau_{i+1})}(y_u)f_i'(y_u)\delta_h \\
		& \qquad  +\sum_{i=1}^{\barK}  \1_{\{\tau_{i}\}}(y_u) \left[ \1_{(0, \infty)}(\delta_h) f_{i}'(\tau_{i})\delta_h + \1_{(-\infty, 0)}(\delta_h) f_{i-1}'(\tau_{i})\delta_h\right]\\
		& = \sum_{i=0}^{\barK}  \1_{(\tau_{i},\tau_{i+1})}(y_u)f_i'(y_u)\delta_h = f'(y_u)\delta_h % \1_{\{y_u \notin E_f \}}f'(y_u)\delta_h
	\end{align*}
	a.a. in $\Omega$.
	Here we have used the condition \eqref{eq:zero-measure} to have that $\1_{\{\tau_{i}\}}(y_u) =0$ a.a. in $\Omega$ and for all $1 \leq i \leq K$. Therefore $f'(y_u;\cdot)$ is linear in the second variable and so is $S'(u;\cdot)$. We have thus shown the G\^{a}teaux-differentiability of $S$ in $u$. Moreover, we obviously have $\delta_h = G_{\chi} h$ with $\chi(x) = \1_{\{y_u \notin E_f \}}(x) f'(y_u(x))$ for a.a. $x \in \Omega$.	
	
	Conversely, we now prove the condition \eqref{eq:zero-measure} under the G\^{a}teaux-differentiability of $S$ in $u$. To this end, take any $h \in L^\barr(\Omega)$ and set $\delta_h := S'(u;h)$ and $\delta_{-h} := S'(u;-h)$. 
	Thanks to the directional derivative formula \eqref{eq:f-dir-der}, we have
	\begin{multline*}
		f'(y_u;\delta_h) + f'(y_u; \delta_{-h})  = \sum_{i=0}^{\barK}  \1_{(\tau_{i},\tau_{i+1})}(y_u)f_i'(y_u)(\delta_h +\delta_{-h}) \\
		\begin{aligned}
		  & +\sum_{i=1}^{\barK}  \1_{\{\tau_{i}\}}(y_u) \left\{  f_{i}'(\tau_{i}) [\1_{(0, \infty)}(\delta_h)\delta_h + \1_{(0, \infty)}(\delta_{-h}) \delta_{-h}] \right. \\
		  & \left. + f_{i-1}'(\tau_{i})[\1_{(-\infty, 0)}(\delta_h)  \delta_h + \1_{(-\infty, 0)}(\delta_{-h}) \delta_{-h}] \right\}.
		\end{aligned}
	\end{multline*}
	Since $\delta_h + \delta_{-h} =0$, there holds
	\begin{multline*}
		f'(y_u;\delta_h) + f'(y_u; \delta_{-h}) = \sum_{i=1}^{\barK}  \1_{\{\tau_{i}\}}(y_u) \delta_h \left\{ f_{i}'(\tau_{i}) \left[ \1_{(0, \infty)}(\delta_h) - \1_{(0, \infty)}( -\delta_{h}) \right] \right. \\
		\begin{aligned}
			%& = \sum_{i=1}^{\barK}  \1_{\{\tau_{i}\}}(y_u) \delta_h \left\{ f_{i}'(\tau_{i}) \left[ \1_{(0, \infty)}(\delta_h) - \1_{(0, \infty)}( -\delta_{h}) \right] \right.\\
			& \qquad \left. + f_{i-1}'(\tau_{i})\left[ \1_{(-\infty, 0)}(\delta_h) - \1_{(-\infty, 0)}(-\delta_{h})\right] \right\} \\
			& = \sum_{i=1}^{\barK}  \1_{\{\tau_{i}\}}(y_u) |\delta_h| [f_{i}'(\tau_{i}) - f_{i-1}'(\tau_{i}) ]  = -\sum_{i=1}^{\barK}  \1_{\{\tau_{i}\}}(y_u) |\delta_h|  \sigma_i.
		\end{aligned}
	\end{multline*}
	Here, we have just used the definition \eqref{eq:sigma-i} of $\sigma_i$ to derive the last identity.
	Moreover, adding the equations for $\delta_h$ and $\delta_{-h}$ yields $f'(y_u;\delta_h) + f'(y_u; \delta_{-h})  =0$. This implies that
	\begin{equation} \label{eq:vanish}
		\sum_{i=1}^{\barK}  \1_{\{\tau_{i}\}}(y_u) |\delta_h|  \sigma_i = 0.
	\end{equation}
	Obviously, the sets $\{ |y_u - \tau_i| < \epsilon_0 \} \backslash \partial\Omega$, $1 \leq i \leq K$, are open in $\Omega$ and mutually disjoint. Here $\epsilon_0$ is defined as in \eqref{eq:epsilon-zero}. We now fix $1 \leq i_0 \leq K$ and choose $\varphi_{i_0} \in C^\infty(\Rbb^N)$ such that
	\begin{equation} \nonumber
		\varphi_{i_0}  >0 \, \text{on } \{ |y_u - \tau_{i_0}| < \epsilon_0 \} \backslash \partial\Omega \quad \text{and} \quad \varphi_{i_0} = 0 \, \text{on } \Rbb^N \backslash( \{ |y_u - \tau_{i_0}| < \epsilon_0 \} \backslash \partial\Omega);
	\end{equation} 
	see, e.g. \cite[Lem.~B.1]{Constantin2018}. 
	Setting $h := A\varphi_{i_0} + f'(y_u;\varphi_{i_0}) \in L^\barr(\Omega)$ and using the uniqueness of solutions to \eqref{eq:dir-der-app}, we have $\delta_h = \varphi_{i_0}$.
	Combining this with \eqref{eq:vanish} 
	gives
	$
		\1_{\{\tau_{i_0}\}}(y_u) \varphi_{i_0}  \sigma_{i_0} = 0,
	$
	and thus $\lambda^N(\{ y_u = \tau_{i_0} \}) =0$,
		%\textcolor{blue}{%text}
	due to \cref{ass:nonsmooth-coefficient}.	
	%}
	Similarly, $\lambda^N(\{ y_u = \tau_{i} \}) =0$ for any $1 \leq i \leq K$. We thus have \eqref{eq:zero-measure}.	
\end{proof}

The following corollary is a direct consequence of \cref{prop:G-diff-control2state} and a chain rule. Its proof is elementary and thus is skipped.
\begin{corollary}
	\label{cor:G-diff}
	Let $u \in L^\barr(\Omega)$ satisfy \eqref{eq:zero-measure}. 
	Then $F$ is G\^{a}teaux-differentiable in $u$. Moreover, for any $h_n \rightharpoonup h$ in $L^\barr(\Omega)$, there holds 
	\begin{equation} \nonumber
		\lim\limits_{n \to \infty } \frac{F(u + t_nh_n) - F(u)}{t_n} = F'(u)h = \int_\Omega (p_u + \nu u) h\dx 
	\end{equation} 
	with $p_u := G_{\chi}^*(\frac{\partial L}{\partial y}(\cdot, y_u) )$, where $\chi(x) := \1_{\{y_u \notin E_f \}}(x) f'(y_u(x))$ for a.a. $x \in \Omega$.
\end{corollary}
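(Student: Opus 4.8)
The plan is to view $F$ as the sum of a smooth tracking term and the Tikhonov term and to differentiate each through a chain rule. Writing $\mathcal{L}(y) := \int_\Omega L(x,y(x))\dx$, we have $F(u) = \mathcal{L}(S(u)) + \frac{\nu}{2}\norm{u}_{L^2(\Omega)}^2$. \cref{ass:integrand} guarantees that $\mathcal{L}: C(\overline\Omega) \to \R$ is continuously Fr\'{e}chet differentiable with $\mathcal{L}'(y)z = \int_\Omega \frac{\partial L}{\partial y}(x,y(x)) z(x)\dx$, while \cref{prop:G-diff-control2state} ensures, under \eqref{eq:zero-measure}, that $S$ is G\^{a}teaux differentiable at $u$ with $S'(u)h = G_\chi h$ for the stated coefficient $\chi$. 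Composing these yields the G\^{a}teaux differentiability of $F$ at $u$ together with $F'(u)h = \int_\Omega \frac{\partial L}{\partial y}(x,y_u) G_\chi h \dx + \nu\int_\Omega u h\dx$; rewriting the first integral through the adjoint operator $G_\chi^*$, which is legitimate because $\frac{\partial L}{\partial y}(\cdot,y_u)\in L^2(\Omega)$ by the domination in \cref{ass:integrand}, gives $\int_\Omega p_u h \dx$ with $p_u = G_\chi^*(\frac{\partial L}{\partial y}(\cdot,y_u))$, which is the claimed formula.

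To obtain the sharper limit along a weakly convergent sequence $h_n \weakto h$ in $L^2(\Omega)$ with $t_n \to 0^+$, I would treat the two terms separately. For the tracking term, set $z_n := t_n^{-1}(S(u+t_nh_n)-S(u))$; by the Hadamard property \eqref{eq:weak-strong-Hadamard-app} of \cref{prop:control-to-state} together with the linearity of $S'(u;\cdot)=G_\chi(\cdot)$ valid under \eqref{eq:zero-measure}, one has $z_n \to z:=G_\chi h$ strongly in $C(\overline\Omega)$. A first-order Taylor expansion with integral remainder then gives
\[
\frac{\mathcal{L}(S(u+t_nh_n))-\mathcal{L}(S(u))}{t_n} = \int_\Omega \int_0^1 \frac{\partial L}{\partial y}(x, y_u + s t_n z_n)\, z_n \ds \dx.
\]
Since $t_n z_n \to 0$ and $z_n \to z$ uniformly on $\overline\Omega$, the arguments $y_u + s t_n z_n$ remain in a fixed ball $\{|y|\le M\}$, so the integrand converges pointwise to $\frac{\partial L}{\partial y}(x,y_u)\,z$ and is dominated by a multiple of $\phi_M \in L^2(\Omega)$ from \cref{ass:integrand}; dominated convergence then passes the limit to $\int_\Omega \frac{\partial L}{\partial y}(x,y_u) G_\chi h \dx = \int_\Omega p_u h \dx$.

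For the Tikhonov term, expanding the square gives $t_n^{-1}\bigl[\frac{\nu}{2}\norm{u+t_nh_n}_{L^2(\Omega)}^2 - \frac{\nu}{2}\norm{u}_{L^2(\Omega)}^2\bigr] = \nu\int_\Omega u h_n \dx + \frac{\nu}{2} t_n \norm{h_n}_{L^2(\Omega)}^2$. Because $h_n \weakto h$, the first summand converges to $\nu\int_\Omega u h\dx$, and since $\{h_n\}$ is bounded in $L^2(\Omega)$ the second is $O(t_n)\to 0$. Adding the two limits yields exactly $\int_\Omega(p_u+\nu u)h\dx$, and choosing $h_n\equiv h$ recovers the G\^{a}teaux derivative, so both assertions follow. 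The only nonroutine point is the limit passage in the tracking term: it hinges on upgrading the directional difference quotients to \emph{strong} convergence in $C(\overline\Omega)$, supplied by the Hadamard differentiability of $S$, so that the Carath\'{e}odory integrand may be controlled uniformly and the domination in \cref{ass:integrand} applied.
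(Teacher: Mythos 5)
Your proposal is correct and follows exactly the route the paper intends: the paper omits the proof as "a direct consequence of \cref{prop:G-diff-control2state} and a chain rule," and your argument is precisely that chain rule carried out in detail — Fr\'{e}chet differentiability of the integral functional via \cref{ass:integrand}, linearity of $S'(u;\cdot)=G_\chi$ from \cref{prop:G-diff-control2state}, the adjoint identification giving $p_u$, and the weak-sequence limit handled through the Hadamard property \eqref{eq:weak-strong-Hadamard-app} together with dominated convergence. Nothing is missing; the write-up supplies the details the paper deliberately skipped.
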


\section{First-order optimality conditions} \label{sec:1stOS}
We first address the existence of minimizers. By $U_{ad}$, 
%\textcolor{blue}{%text}
	we denote the set of admissible points 
%}%
of \eqref{eq:P}, that is,
\begin{equation} \nonumber
	U_{ad} : = \left\{ u \in L^\infty(\Omega) \mid \alpha \leq u(x) \leq \beta \, \text{for a.a. } x \in \Omega  \right\}.
\end{equation} 
Obviously, $U_{ad}$ is a closed, convex and bounded set in $L^\infty(\Omega)$ and thus weakly closed in $L^\barr(\Omega)$. From this and the weak lower semicontinuity of the objective $J_\kappa(u)$, we deduce from a standard argument the following result on existence of optimal controls for \eqref{eq:P}. 
\begin{proposition}
	\label{prop:existence}
	There exists a local minimizer $\bar u_\kappa \in U_{ad}$ of \eqref{eq:P}.
\end{proposition}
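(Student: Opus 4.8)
The plan is to apply the direct method of the calculus of variations, exactly as signalled by the paragraph preceding the statement. First I would record that $U_{ad}$ is nonempty (e.g. $0 \in U_{ad}$ by \cref{ass:Tikhonov-para}) and, being closed, convex and bounded in $L^\infty(\Omega)$, is weakly sequentially compact in the reflexive space $L^2(\Omega)$. Next I would verify that $J$ is bounded below on $U_{ad}$: since $U_{ad}$ is bounded in $L^\infty(\Omega)$, \cref{prop:control-to-state}\ref{it:Lipschitz-app} yields a uniform bound $M$ on $\norm{y_u}_{C(\overline\Omega)}$ for $u \in U_{ad}$, so that the growth bound on $\partial L/\partial y$ in \cref{ass:integrand} together with $L(\cdot,0) \in L^1(\Omega)$ controls $\int_\Omega L(x,y_u)\dx$ from below, while the Tikhonov and $L^1$ terms are nonnegative. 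Hence $m := \inf_{u \in U_{ad}} J(u) > -\infty$, and I fix a minimizing sequence $\{u_n\} \subset U_{ad}$ with $J(u_n) \to m$.

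By weak sequential compactness I would extract a subsequence (not relabelled) with $u_n \rightharpoonup \bar u$ in $L^2(\Omega)$, and weak closedness of $U_{ad}$ gives $\bar u \in U_{ad}$. The crux is then the weak lower semicontinuity of $J$ along this sequence, which I would establish termwise. The map $u \mapsto \tfrac{\nu}{2}\norm{u}_{L^2(\Omega)}^2$ is convex and continuous, hence weakly lower semicontinuous; likewise $j(u) = \norm{u}_{L^1(\Omega)}$ is convex and continuous on $L^2(\Omega)$ (as $\Omega$ is bounded), hence weakly lower semicontinuous. For the remaining term I would invoke the weak-to-strong continuity of $S$ recorded in \cref{sec:standing-assumption} (cf. \cite[Thm.~2.2]{CasasTroltzsch2009}): from $u_n \rightharpoonup \bar u$ it follows that $y_{u_n} = S(u_n) \to S(\bar u) = y_{\bar u}$ strongly in $C(\overline\Omega)$, so by continuity of $L(x,\cdot)$ and dominated convergence (the integrands being dominated by $|L(\cdot,0)| + M\phi_M \in L^1(\Omega)$, since $\phi_M \in L^2(\Omega) \subset L^1(\Omega)$) one gets $\int_\Omega L(x,y_{u_n})\dx \to \int_\Omega L(x,y_{\bar u})\dx$; this term is in fact weakly continuous.

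Combining the three pieces yields $J(\bar u) \le \liminf_{n} J(u_n) = m$, while $\bar u \in U_{ad}$ forces the reverse inequality $J(\bar u) \ge m$ by definition of $m$; hence $J(\bar u) = m$, so $\bar u$ is a global minimizer and, a fortiori, the local minimizer claimed. I expect the only genuine difficulty to lie in the weak (indeed weak lower) semicontinuity of $u \mapsto \int_\Omega L(x,y_u)\dx$, and this is entirely resolved by the weak-to-strong continuity of the control-to-state operator $S$; the boundedness-below step and the semicontinuity of the convex $L^2$- and $L^1$-terms are routine once the uniform $C(\overline\Omega)$-bound on the states is in hand.
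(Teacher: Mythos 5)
Your proposal is correct and follows exactly the route the paper intends: the paper gives no written proof, only the remark preceding the statement that $U_{ad}$ is weakly closed in $L^2(\Omega)$ and $J$ is weakly lower semicontinuous, so existence follows "from a standard argument" --- namely the direct method you carried out, with the weak-to-strong continuity of $S$ (cited in \cref{sec:standing-assumption}) handling the tracking term and convexity plus continuity handling the $L^2$- and $L^1$-terms. Your fleshed-out version, including the uniform $C(\overline\Omega)$-bound on states from \cref{prop:control-to-state}\ref{it:Lipschitz-app} and the observation that the global minimizer is a fortiori local, is a faithful and complete rendering of that argument.
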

%\textcolor{blue}{%text}
	To simplify the notation, 
%}%
from now on except \cref{sec:stability} we will drop the subscript $\kappa$ in the symbol for any minimizer $\bar u_\kappa$ of \eqref{eq:P}, that is, we shall write $\bar u$ as a local minimizer defined as in \cref{prop:existence}.

%\begin{proof}
%	The proof is standard and based on the boundedness and the weak closedness of the admissible set $U_{ad}$ as well as the weak lower semicontinuity of the objective $J(u)$.
%\end{proof}

\medskip 

In the study of the first- and second-order optimality conditions, in addition to the lack of the differentiability of the control-to-state operator, there is an obstacle coming from the fact that the functional $j(u) = \norm{u}_{L^1(\Omega)}$ is only directionally differentiable. Its directional derivative is defined as
\begin{equation}
	\label{eq:j-dir-der}
	j'(u, v) := \lim\limits_{t \to 0^+} \frac{j(u+tv) - j(u)}{t} = \int_{\{ u>0 \} } v\dx - \int_{\{ u < 0 \} } v\dx + \int_{\{ u =0 \} } |v|\dx
\end{equation}
for $u, v \in L^1(\Omega).$
Moreover, since $j$ is Lipschitz continuous and convex, the subdifferential in the  sense of convex analysis coincides with the generalized gradients introduced by Clarke \cite{Clarke2}. By a simple computation, we derive
\begin{equation}
	\label{eq:subderivative-j-formulation}
	\lambda \in \partial j(u) \quad \Longleftrightarrow \quad
	\lambda(x) \left\{ \begin{aligned}
	&=1 && \text{ for a.a. } x \in \{ u>0\},\\
	&=-1 &&\text{ for a.a. } x \in \{ u<0\},\\
	&\in [-1,1] && \text{ for a.a. } x \in \{ u=0\}.
	\end{aligned}
	\right.
\end{equation}
Furthermore, one has
\begin{equation}
	\label{eq:subdiff-directional-relation}
	\max \limits_{\lambda \in \partial j(u)} \int_\Omega \lambda(x) v(x) \dx = j'(u;v) \leq  \frac{j(u+tv) - j(u)}{t} \quad \forall 0 < t <1.
\end{equation}
We refer to  \cite{CasasHerzogWachsmuth2012,CasasTroltzsch2020,WachsmuthWachsmuth2011,CasasTroltzsch2019,Casas2012,CasasTroltzsch2014} for these facts on the subdifferential of $j$. Finally, for $v_n \rightharpoonup v$ in $L^1(\Omega)$ and $\{t_n\} \in c_0^+$, there holds
\begin{equation} \label{eq:j-weak-dir-deri}
	\liminf\limits_{n \to \infty} \frac{j(u+t_nv_n) - j(u)}{t_n} \geq j'(u; v).
\end{equation}
To show this, by a simple computation, we can write
\begin{multline*}
	\frac{j(u+t_nv_n) - j(u)}{t_n} = \int_{\{ u>0 \} } \frac{|u + t_n v_n| - u}{t_n} \dx + \int_{\{ u < 0 \} } \frac{|u + t_n v_n| + u}{t_n} \dx \\
	\begin{aligned}
		& + \int_{\{ u =0 \} } |v_n| \dx 
		 \geq \int_{\{ u>0 \} } v_n  \dx + \int_{\{ u < 0 \} } (-v_n) \dx + \int_{\{ u =0 \} } |v_n| \dx. 
	\end{aligned}
\end{multline*}
Thanks to the weak lower semicontinuity of 
%\textcolor{blue}{%text}
the $L^1$-norm, 
%}
one has
\begin{equation} \nonumber
	\liminf\limits_{n \to \infty}\int_{\{ u =0 \} } |v_n| \dx  \geq \int_\Omega \1_{\{ u =0 \} } |v|\dx = \int_{\{ u =0 \} } |v|\dx,
\end{equation} 
which infers
\begin{align*}
	\liminf\limits_{n \to \infty}\frac{j(u+t_nv_n) - j(u)}{t_n} & \geq \int_{\{ u>0 \} } v\dx - \int_{\{ u < 0 \} } v\dx + \int_{\{ u =0 \} } |v|\dx.
\end{align*}
This together with \eqref{eq:j-dir-der} leads  to \eqref{eq:j-weak-dir-deri}. As seen in the proof of \cref{thm:2nd-OS-suf}, the estimate \eqref{eq:j-weak-dir-deri} plays an important role in showing that a direction $h$ belongs to a critical cone defined in \eqref{eq:critical-cone} below when $h$ is a weak accumulation point of an appropriate sequence generated by applying a contradiction argument.

\medskip 
To derive C-stationarity conditions, we apply the adapted penalization method of Barbu \cite{Barbu1984}.
The state equation shall be regularized via a classical mollification of the non-smooth nonlinearity.
Let $\psi \in C^\infty_0(\Rbb)$ be a non-negative function such that $\mathrm{supp}(\psi) \subset [-1,1]$, $\int_{\Rbb} \psi(\tau)d\tau = 1$
and define the family $\{f_\epsilon\}_{\epsilon>0}$ of functions
\begin{equation} \notag
	f_\epsilon := \frac{1}{\epsilon}f * \left(\psi\circ (\epsilon^{-1}{\Id})\right)
\end{equation}
with $f *g$ standing for the convolution of $f$ and $g$, and $\Id$ being the identity mapping.
Then, $f_\epsilon \in C^\infty(\Rbb)$ by a standard result.
In addition, a simple calculation shows that
%\textcolor{blue}{%text}
\begin{equation*} 
	f'_\epsilon(t) \geq 0 \quad \forall t \in \Rbb.
\end{equation*}
%}%
Moreover, for any $M>0$, there exists a constant $C_M >0$ independent of $\epsilon$ satisfying for all $\epsilon \in (0,1)$ that
\begin{align} \label{eq:f_regu}
	&|f_\epsilon(t) - f(t) | \leq C_{M}\epsilon && \text{for all} \quad t \in \Rbb, |t|\leq M, \\
	%\shortintertext{and}
	& |f_\epsilon (t_1) - f_\epsilon(t_2) | \leq C_{M} |t_1 - t_2 | && \text{for all} \quad t_i \in \Rbb, |t_i | \leq M, i=1,2. \label{eq:f_regu_Lip}
\end{align}
We now consider the regularized state equation associated with \eqref{eq:state}
\begin{equation} \label{eq:state-regularized}
	A y + f_\epsilon(y)  = u \, \text{in } \Omega, \quad
	y =0 \, \text{on } \partial\Omega \quad \text{for } u \in L^\barr(\Omega).
	%\left\{
	%\begin{aligned}
	%	A y + f_\epsilon(y) & = u && \text{in } \Omega, \\
	%	y &=0 && \text{on } \partial\Omega
	%\end{aligned}
	%\right.
\end{equation}
By $S_\epsilon$ we denote the regularization of the control-to-state operator $S$, that is, $S_\epsilon$ maps any $u \in L^\barr(\Omega)$ to the corresponding $y \in H^1_0(\Omega) \cap C(\overline\Omega)$ uniquely solving \eqref{eq:state-regularized}. 
Some useful properties of $S_\epsilon$ are listed below.
\begin{theorem} \label{thm:S-reg}
	The following assertions are fulfilled:
	\begin{enumerate}[label = (\roman*)]
		\item \label{item:S-reg-deri} 
		$S_\epsilon$ is of class $C^1$. Furthermore, for any $u, h \in L^\barr(\Omega)$, $z:= S'_\epsilon(u)h$ is the unique solution in $H^1_0(\Omega) \cap C(\overline\Omega)$ of the Dirichlet problem
		\begin{equation*} 
			A z + f'_\epsilon(S_\epsilon(u))z  = h \, \text{in } \Omega, \quad
			z =0 \, \text{on } \partial\Omega;
			%\left\{
			%\begin{aligned}
			% A z + f'_\epsilon(S_\epsilon(u))z & = h && \text{in } \Omega, \\
			 %z &=0 && \text{on } \partial\Omega;
			%\end{aligned}
			%\right.
		\end{equation*}
		\item \label{item:w2s-converge}
		 If $u_n \rightharpoonup u$ in $L^\barr(\Omega)$ as $n \to \infty$, then $S_\epsilon(u_n) \to S_\epsilon (u)$ in $H^1_0(\Omega) \cap C(\overline\Omega)$ as $n \to \infty$;
		 \item \label{item:S-regularization} 
		 For any bounded set $U$ in $L^\barr(\Omega)$, there exists a constant $C_U>0$ such that
		 \begin{equation} \nonumber
		 	\norm{S_\epsilon(u) - S(u) }_{H^1_0(\Omega)} + \norm{S_\epsilon(u) - S(u)}_{C(\overline\Omega)} \leq C_U \epsilon \quad \text{for all } u \in U;
		 \end{equation} 
		 \item \label{item:w2s-converge-main}
		 If $u_\epsilon \rightharpoonup u$ in $L^\barr(\Omega)$ as $\epsilon \to 0^+$, then $S_\epsilon(u_\epsilon) \to S(u)$ in $H^1_0(\Omega) \cap C(\overline\Omega)$ as $\epsilon \to 0^+$.
	\end{enumerate}
\end{theorem}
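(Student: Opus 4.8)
The plan is to establish the four assertions essentially independently, relying throughout on the smoothness of $f_\epsilon$, on its monotonicity (the $f_i$ are increasing, hence so is $f$, and mollification against a nonnegative kernel preserves this, giving $f'_\epsilon \geq 0$), and on the uniform estimates \eqref{eq:f_regu}--\eqref{eq:f_regu_Lip}. For the first assertion I would invoke the implicit function theorem: since $f_\epsilon \in C^\infty(\R)$ induces a $C^1$ superposition operator on $C(\overline\Omega)$, the map $u \mapsto S_\epsilon(u)$ is $C^1$, and its linearization at $y = S_\epsilon(u)$ is the operator $A + f'_\epsilon(y)\,\cdot$, which coincides with $G_\chi^{-1}$ for $\chi = f'_\epsilon(y) \geq 0$ (cf. \cref{def:sol-oper-adjoint}) and is therefore boundedly invertible; differentiating the state equation then yields the stated linear problem for $z = S'_\epsilon(u)h$. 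For the second assertion I would repeat verbatim the reasoning recalled for $S$ in \cref{sec:standing-assumption}: a sequence $u_n \rightharpoonup u$ in $L^2(\Omega)$ converges strongly in $W^{-1,p}(\Omega)$ for some $p > N$ because $L^2(\Omega) \Subset W^{-1,p}(\Omega)$, the states $S_\epsilon(u_n)$ are uniformly bounded in $H^1_0(\Omega) \cap C(\overline\Omega)$, and the Lipschitz/monotone structure of $f_\epsilon$ upgrades this to strong convergence, the limit being $S_\epsilon(u)$ by uniqueness.

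The crux is the uniform rate. Fixing a bounded set $U \subset L^2(\Omega)$ and $M$ with $\norm{S(u)}_{C(\overline\Omega)}, \norm{S_\epsilon(u)}_{C(\overline\Omega)} \leq M$ for all $u \in U$ and $\epsilon \in (0,1)$, I set $w := S_\epsilon(u) - S(u)$ and subtract the two state equations to get $Aw + [f_\epsilon(S_\epsilon(u)) - f_\epsilon(S(u))] = -[f_\epsilon(S(u)) - f(S(u))]$. Writing the first bracket as $c(x)\,w$ with $c(x) \geq 0$ by monotonicity of $f_\epsilon$, this becomes $Aw + c\,w = g$ with $\norm{g}_{L^\infty(\Omega)} \leq C_M \epsilon$ by \eqref{eq:f_regu}, hence $\norm{g}_{L^2(\Omega)} \leq C\epsilon$. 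Testing with $w$, using the coercivity of $A$ and discarding the nonnegative term $\int_\Omega c\,w^2 \dx$, I bound the right-hand side by $C\epsilon\,\norm{w}_{L^2(\Omega)} \leq C\epsilon\,\norm{w}_{H^1_0(\Omega)}$, which gives the $H^1_0$-estimate. The $C(\overline\Omega)$-estimate then follows by writing $w = G_c g$ and using the bound $\norm{G_c g}_{C(\overline\Omega)} \leq C\norm{g}_{L^2(\Omega)} \leq C'\epsilon$, where $C$ is independent of the nonnegative coefficient $c$.

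The last assertion is then immediate from the splitting $S_\epsilon(u_\epsilon) - S(u) = [S_\epsilon(u_\epsilon) - S(u_\epsilon)] + [S(u_\epsilon) - S(u)]$: a weakly convergent net is bounded, so the first term is $O(\epsilon)$ by the uniform rate just proved, while the second tends to zero by the weak-to-strong continuity of $S$ recalled in \cref{sec:standing-assumption}. The main obstacle I anticipate is precisely the $C(\overline\Omega)$ half of the uniform rate: the energy method controls only the $H^1_0$-norm, so one must supply the $L^2$-to-$C(\overline\Omega)$ bound for $G_c$ with a constant that does \emph{not} deteriorate as $c = c_\epsilon$ ranges over nonnegative $L^\infty(\Omega)$ functions. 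This rests on a Stampacchia/De Giorgi-type $L^\infty$-estimate, available here because $N \in \{1,2,3\}$ makes $L^2(\Omega) \hookrightarrow L^q(\Omega)$ admissible for some $q > N/2$, and because adding the nonnegative zeroth-order term $c$ can only improve, never worsen, the bound.
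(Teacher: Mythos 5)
Your proposal is correct, and for the core estimate \ref{item:S-regularization} it follows essentially the paper's own route: an energy estimate plus a Stampacchia-type $L^\infty$ bound whose constant is uniform over nonnegative zeroth-order coefficients. The only (cosmetic) difference there is the decomposition: you exploit the monotonicity of $f_\epsilon$ (which does follow from mollifying the increasing $f$ against a nonnegative kernel), writing $f_\epsilon(S_\epsilon(u)) - f_\epsilon(S(u)) = c\,w$ with $c \geq 0$ and placing the mollification error $f_\epsilon(S(u)) - f(S(u))$ at the \emph{unregularized} state, whereas the paper uses the monotonicity of $f$ itself, setting $a_\epsilon := \1_{\{y_\epsilon \neq y\}}\frac{f(y_\epsilon)-f(y)}{y_\epsilon - y}$ and placing the error $f(y_\epsilon)-f_\epsilon(y_\epsilon)$ at the regularized state; both are covered by \eqref{eq:f_regu} since both states lie in the ball of radius $M$, and the uniformity-in-$c$ of the $L^\infty$ estimate that you justify by the truncation argument is exactly what the paper gets from Stampacchia's theorem (Chipot, Thm.~12.4) with ``$C$ independent of $\epsilon$''. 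Where you genuinely diverge is assertion \ref{item:w2s-converge-main}: the paper proves it from scratch, subtracting the equations for $S_\epsilon(u_\epsilon)$ and $S(u)$, testing to get $\norm{y_\epsilon - y}_{H^1_0(\Omega)} \leq C[\norm{u_\epsilon - u}_{H^{-1}(\Omega)} + \epsilon]$, and then applying Stampacchia again together with the compact embeddings $L^2(\Omega) \Subset H^{-1}(\Omega)$ and $L^2(\Omega) \Subset W^{-1,p}(\Omega)$; your triangle-inequality splitting $S_\epsilon(u_\epsilon) - S(u) = [S_\epsilon(u_\epsilon) - S(u_\epsilon)] + [S(u_\epsilon) - S(u)]$ instead reduces \ref{item:w2s-converge-main} to the uniform rate \ref{item:S-regularization} plus the weak-to-strong continuity of $S$, which the paper records (with citation) in \cref{sec:standing-assumption}. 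Your version is more modular and arguably cleaner; the paper's is self-contained in that it does not lean on that cited continuity property. For \ref{item:S-reg-deri} and \ref{item:w2s-converge} both you and the paper defer to standard arguments (the paper simply cites the literature), so no issue arises there.
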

\begin{proof}
	The arguments for \ref{item:S-reg-deri} and \ref{item:w2s-converge} are standard; see, e.g. \cite[Thms.~2.1 \& 2.3]{CasasTroltzsch2020}. For \ref{item:S-regularization}, by taking any bounded set $U \subset L^\barr(\Omega)$, there exists a constant $M = M(U) >0$ satisfying
	\begin{equation} \nonumber
		\norm{S(u)}_{C(\overline\Omega)}, \norm{S_\epsilon(u)}_{C(\overline\Omega)} \leq M \quad \text{for all } u \in U, \epsilon \in (0,1).
	\end{equation} 
	Taking $u \in U$ arbitrarily, setting $y := S(u)$, $y_\epsilon := S_\epsilon(u)$, and subtracting the equations for $y_\epsilon$ and $y$ yield
	\begin{equation*}
		\label{eq:subtracting-y}
		A (y_\epsilon - y) + (f(y_\epsilon) - f(y))   = f(y_\epsilon) - f_\epsilon(y_\epsilon) \, \text{in } \Omega, \quad
		y_\epsilon - y =0 \ \text{on } \partial\Omega.
		%\left\{
		%\begin{aligned}
		%	A (y_\epsilon - y) + (f(y_\epsilon) - f(y))  & = f(y_\epsilon) - f_\epsilon(y_\epsilon) && \text{in } \Omega, \\
		%	y_\epsilon - y &=0 && \text{on } \partial\Omega.
		%\end{aligned}
		%\right.
	\end{equation*}
	Testing the above equation by $y_\epsilon - y$, applying the monotonicity of $f$, and employing the strong ellipticity condition in \cref{ass:ellipticity}, we deduce from a Cauchy–Schwarz inequality that
	\begin{multline*}
		\Lambda \norm{\nabla (y_\epsilon - y)}_{L^2(\Omega)}^2  \leq \int_\Omega [f(y_\epsilon) - f_\epsilon(y_\epsilon)](y_\epsilon - y) \dx \\
		\leq \norm{ y_\epsilon - y}_{L^2(\Omega)} \norm{f(y_\epsilon) - f_\epsilon(y_\epsilon)}_{L^2(\Omega)} \leq  C_M\epsilon \lambda^N(\Omega)^{1/2}\norm{ y_\epsilon - y}_{L^2(\Omega)},
	\end{multline*}
	where we have just used  \eqref{eq:f_regu} and the fact that $\norm{y_\epsilon}_{C(\overline\Omega)}, \norm{y}_{C(\overline\Omega)} \leq M$ to get the last inequality. This and the Poincar\'{e} inequality imply that
	$
		\norm{y_\epsilon - y}_{H^1_0(\Omega)} \leq C \epsilon.
	$
	We now prove that 
	\begin{equation}
		\label{eq:Linfty-esti}
		\norm{y_\epsilon - y}_{L^\infty(\Omega)} \leq C \epsilon
	\end{equation}
	and \ref{item:S-regularization} then follows. To this aim, we put
	$
		a_\epsilon  := \1_{\{ y_\epsilon \neq y \}} \frac{f(y_\epsilon) - f(y)}{y_\epsilon - y}
	$
	and have $0 \leq a_\epsilon \leq C_M$ a.a. in $\Omega$ thanks to the monotonicity and the local Lipschitz continuity of $f$. Obviously, $y_\epsilon$ and $y$ satisfy
	\begin{equation*}
		\label{eq:subtracting-rewritten}
		A (y_\epsilon - y) + a_\epsilon (y_\epsilon - y)   = f(y_\epsilon) - f_\epsilon(y_\epsilon) \, \text{in } \Omega, \quad
		y_\epsilon - y  =0 \, \text{on } \partial\Omega.
		%\left\{
		%\begin{aligned}
		%	A (y_\epsilon - y) + a_\epsilon (y_\epsilon - y)  & = f(y_\epsilon) - f_\epsilon(y_\epsilon) && \text{in } \Omega, \\
		%	y_\epsilon - y &=0 && \text{on } \partial\Omega.
		%\end{aligned}
		%\right.
	\end{equation*}
	Applying the Stampacchia theorem \cite[Thm.~12.4]{Chipot2009} gives
	\begin{equation} \nonumber
		\norm{y_\epsilon - y}_{L^\infty(\Omega)} \leq C \norm{f(y_\epsilon) - f_\epsilon(y_\epsilon)}_{W^{-1,p}(\Omega)}  \leq C \norm{f(y_\epsilon) - f_\epsilon(y_\epsilon)}_{L^\infty(\Omega)}
	\end{equation} 
	for some $p>N$ and for some positive constant $C$ independent of $\epsilon$. The combination of this with \eqref{eq:f_regu} and the fact that $\norm{y_\epsilon}_{C(\overline\Omega)}, \norm{y}_{C(\overline\Omega)} \leq M$ yields \eqref{eq:Linfty-esti}.

		It remains to prove \ref{item:w2s-converge-main}. For that purpose, we assume that $u_\epsilon \rightharpoonup u$ in $L^\barr(\Omega)$ and set $y := S(u)$ and $y_\epsilon := S_\epsilon(u_\epsilon)$. Subtracting the equations for $y_\epsilon$ and $y$ yields
	\begin{equation}
		\label{eq:subtracting-y-reg}
		\left\{
		\begin{aligned}
			A (y_\epsilon - y) + (f(y_\epsilon) - f(y))  & = u_\epsilon - u + f(y_\epsilon) - f_\epsilon(y_\epsilon) && \text{in } \Omega, \\
			y_\epsilon - y &=0 && \text{on } \partial\Omega.
		\end{aligned}
		\right.
	\end{equation}
	%\textcolor{blue}{%text}
	Since $\barr > \frac{N}{2}$, there exists a number $p >N$ such that $L^\barr(\Omega) \Subset W^{-1,p}(\Omega)$.
	%}%
	Applying the Stampacchia theorem to \eqref{eq:subtracting-y-reg} then yields
	\begin{equation} \nonumber
	\norm{y_\epsilon - y}_{L^\infty(\Omega)} \leq C \left[ \norm{u_\epsilon - u}_{W^{-1,p}(\Omega)} + \norm{f(y_\epsilon) - f_\epsilon(y_\epsilon)}_{W^{-1,p}(\Omega)}  \right],
	\end{equation} 
	which together with \eqref{eq:f_regu} gives $y_\epsilon \to y$ in $L^\infty(\Omega)$.
	On the other hand, testing the above equation by $y_\epsilon - y$, applying the monotonicity of $f$, and using the strong ellipticity condition in \cref{ass:ellipticity}, we have
	\begin{align*}
	\Lambda \norm{\nabla (y_\epsilon - y)}_{L^2(\Omega)}^2 & \leq C_\barr\norm{u_\epsilon - u}_{L^\barr(\Omega)} \norm{y_\epsilon -y}_{L^\infty(\Omega)} + \int_\Omega [f(y_\epsilon) - f_\epsilon(y_\epsilon)](y_\epsilon - y) \dx.
	\end{align*}
	From this, \eqref{eq:f_regu}, and the limit  $y_\epsilon \to y$ in $L^\infty(\Omega)$, we have $\norm{\nabla (y_\epsilon - y)}_{L^2(\Omega)} \to 0$. 
	%	From this, a Cauchy–Schwarz inequality, and  the Poincar\'{e} inequality, there holds
	%	\begin{align*} \nonumber
	%		\norm{y_\epsilon - y}_{H^1_0(\Omega)} & \leq C \left[\norm{u_\epsilon - u}_{H^{-1}(\Omega)} + \norm{f(y_\epsilon) - f_\epsilon(y_\epsilon)}_{L^2(\Omega)} \right] \\
	%		& \leq C\left[\norm{u_\epsilon - u}_{H^{-1}(\Omega)} + \epsilon \right].
	%	\end{align*} 
	%	Here the inequality \eqref{eq:f_regu} has been employed to show the last estimate. Then $y_\epsilon \to y$ in $H^1_0(\Omega)$, due to the compact embedding $L^2(\Omega) \Subset H^{-1}(\Omega)$. On the other hand, since $N \leq 3$, we can take a number $p>N$ such that $L^2(\Omega) \Subset W^{-1,p}(\Omega)$. 
	Assertion \ref{item:w2s-converge-main} is then proven.
\end{proof}
\medskip 

%\textcolor{blue}{%text}
	In the remainder of this section, 
%}%
let $\bar u := \bar u_\kappa$ be an arbitrary but fixed minimizer of \eqref{eq:P}. In order to derive the first-order optimality conditions at $\bar u$, we consider the regularized optimal control problem
\begin{equation} \label{eq:P-reg}
	\tag{P$_{\kappa,\epsilon}$} 
	\min_{u\in U_{ad}}  H_\epsilon(u) := J_{\kappa,\epsilon}(u)  + \frac{1}{2} \norm{u - \bar u}_{L^2(\Omega)}^2,
\end{equation}
where
\begin{equation} \nonumber
	J_{\kappa,\epsilon} (u):=  F_\epsilon(u) + \kappa j(u) \quad \text{with} \quad F_\epsilon (u) := \int_\Omega L(x, S_\epsilon(u)(x)) \dx + \frac{\nu}{2} \norm{u}_{L^2(\Omega)}^2.
\end{equation}
%\textcolor{blue}{%text}
As a result of the choice of $\barr$ in \eqref{eq:barr-choice}, the functional $ L^\barr(\Omega) \ni u \mapsto \norm{u}_{L^2(\Omega)}^2 \in \Rbb$ is continuously differentiable.
%}%
Thanks to assertion \ref{item:S-reg-deri} in \cref{thm:S-reg} and the chain rule,  $F_\epsilon$ is thus of class $C^1$ as a functional on $L^\barr(\Omega)$. By combining this with the convexity and Lipschitz continuity of the functional $j$, we can apply the classical abstract results presented in \cite{BonnansShapiro2000,Clarke2} to deduce the necessary optimality conditions for \eqref{eq:P-reg}.
\begin{proposition}[{cf. \cite[Thm.~3.1]{CasasHerzogWachsmuth2012}}] \label{prop:OS-reg}
	The regularized problem \eqref{eq:P-reg} admits at least one local minimizer $\bar u_\epsilon \in U_{ad}$. Moreover, there exist $\bar p_\epsilon \in H^1_0(\Omega) \cap C(\overline\Omega)$ and $\bar \lambda_\epsilon \in \partial j(\bar u_\epsilon)$ such that
	\begin{subequations}
		\label{eq:1st-OS-reg}
		\begin{align}
			& A^* \bar p_\epsilon +  f_\epsilon'(\bar y_\epsilon) \bar p_\epsilon  = \frac{\partial L}{\partial y}(x,\bar y_\epsilon) \, \text{in } \Omega, \quad
			\bar p_\epsilon =0 \, \text{on } \partial\Omega,
			%\left\{
			%\begin{aligned}
			%A^* \bar p_\epsilon +  f_\epsilon'(\bar y_\epsilon) \bar p_\epsilon &= \frac{\partial L}{\partial y}(x,\bar y_\epsilon) && \text{in } \Omega, \\
			%\bar p_\epsilon &=0 && \text{on } \partial\Omega,
			%\end{aligned}
			%\right.  
			\label{eq:adjoint-OS-reg} \\
			&\int_\Omega \left(\bar p_\epsilon + \nu \bar u_\epsilon + (\bar u_\epsilon - \bar u) + \kappa \bar \lambda_\epsilon\right)\left( u - \bar u_\epsilon \right) \dx \geq 0 \quad \text{for all } u \in {U}_{ad}  \label{eq:normal-OS-reg}
		\end{align}
	\end{subequations}
	with $\bar y_\epsilon := S_\epsilon(\bar u_\epsilon)$. 
\end{proposition}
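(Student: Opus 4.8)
The plan is to establish existence of a minimizer by the direct method and then to derive \eqref{eq:1st-OS-reg} by combining the smoothness of the reduced objective with convex subdifferential calculus for the term $\kappa j$, following the scheme of \cite[Thm.~3.1]{CasasHerzogWachsmuth2012}.

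First I would prove existence. The set $U_{ad}$ is convex, closed and bounded in $L^2(\Omega)$, hence weakly sequentially compact. Taking a minimizing sequence $\{u_n\}\subset U_{ad}$ and, after extraction, a weak limit $u_n \rightharpoonup \bar u_\epsilon \in U_{ad}$, it suffices to show that $H_\epsilon$ is weakly lower semicontinuous. By assertion \ref{item:w2s-converge} in \cref{thm:S-reg}, $S_\epsilon(u_n) \to S_\epsilon(\bar u_\epsilon)$ strongly in $C(\overline\Omega)$; combined with the Carath\'eodory and growth properties of $L$ from \cref{ass:integrand} (via dominated convergence), this yields $\int_\Omega L(x,S_\epsilon(u_n))\dx \to \int_\Omega L(x,S_\epsilon(\bar u_\epsilon))\dx$, so the tracking term is even weakly continuous. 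The remaining terms $\frac{\nu}{2}\norm{\cdot}_{L^2(\Omega)}^2$, $\kappa j(\cdot)=\kappa\norm{\cdot}_{L^1(\Omega)}$, and $\frac12\norm{\cdot-\bar u}_{L^2(\Omega)}^2$ are convex and continuous, hence weakly lower semicontinuous. Thus $H_\epsilon$ attains its infimum over $U_{ad}$ at $\bar u_\epsilon$, which is in particular a local minimizer.

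Next I would write $H_\epsilon = G_\epsilon + \kappa j$, where $G_\epsilon(u):=F_\epsilon(u)+\frac12\norm{u-\bar u}_{L^2(\Omega)}^2$. By assertion \ref{item:S-reg-deri} in \cref{thm:S-reg} and the chain rule, $G_\epsilon$ is of class $C^1$ on $L^2(\Omega)$, while $\kappa j$ is convex and Lipschitz continuous. Testing the local optimality of $\bar u_\epsilon$ along feasible directions $u-\bar u_\epsilon$ with $u\in U_{ad}$ (which lie in $U_{ad}$ by convexity) and passing to the limit $t\to 0^+$ gives the primal first-order inequality $\langle G_\epsilon'(\bar u_\epsilon), u-\bar u_\epsilon\rangle + \kappa j'(\bar u_\epsilon; u-\bar u_\epsilon) \geq 0$. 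To convert the directional derivative $j'(\bar u_\epsilon;\cdot)$ into a single fixed multiplier, I would invoke the abstract necessary condition for the sum of a $C^1$ functional and a continuous convex functional over a convex set, relying on the subdifferential sum rule of \cite{BonnansShapiro2000,Clarke2}: there exists $\bar\lambda_\epsilon \in \partial j(\bar u_\epsilon)$ with $\int_\Omega (G_\epsilon'(\bar u_\epsilon)+\kappa\bar\lambda_\epsilon)(u-\bar u_\epsilon)\dx \geq 0$ for all $u\in U_{ad}$, where $\partial j$ is characterized by \eqref{eq:subderivative-j-formulation}.

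Finally I would identify $G_\epsilon'(\bar u_\epsilon)$. Setting $\chi := f_\epsilon'(\bar y_\epsilon)$, which is nonnegative and bounded because $f_\epsilon$ is a mollification of the monotone, locally Lipschitz $f$, I define $\bar p_\epsilon := G_\chi^*\!\big(\frac{\partial L}{\partial y}(\cdot,\bar y_\epsilon)\big)$; since $\frac{\partial L}{\partial y}(\cdot,\bar y_\epsilon)\in L^2(\Omega)$ by \cref{ass:integrand}, \cref{def:sol-oper-adjoint} together with linear elliptic theory gives $\bar p_\epsilon \in H^1_0(\Omega)\cap C(\overline\Omega)$ solving \eqref{eq:adjoint-OS-reg}. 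For $h\in L^2(\Omega)$, writing $z:=S_\epsilon'(\bar u_\epsilon)h$ (so $Az+\chi z=h$) and using the bilinear-form symmetry $\langle A^*\bar p_\epsilon, z\rangle = \langle \bar p_\epsilon, Az\rangle$, pairing \eqref{eq:adjoint-OS-reg} with $z$ yields $\int_\Omega \frac{\partial L}{\partial y}(\cdot,\bar y_\epsilon) z\dx = \int_\Omega \bar p_\epsilon h\dx$. Hence $G_\epsilon'(\bar u_\epsilon)h = \int_\Omega (\bar p_\epsilon + \nu\bar u_\epsilon + (\bar u_\epsilon-\bar u))h\dx$, and substituting this into the inequality of the previous step produces \eqref{eq:normal-OS-reg}. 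I expect the only delicate point to be the extraction of the single subgradient $\bar\lambda_\epsilon$ from the directional-derivative inequality, that is, the application of the convex sum rule in function space; once the adjoint identity is in place the remaining computation is routine.
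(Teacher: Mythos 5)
Your proposal is correct and follows essentially the same route the paper takes: it decomposes $H_\epsilon$ into the $C^1$ part (using assertion \ref{item:S-reg-deri} of \cref{thm:S-reg} and the chain rule) plus the convex Lipschitz term $\kappa j$, obtains existence by the direct method, and invokes the classical convex-analysis sum rule of \cite{BonnansShapiro2000,Clarke2} to extract the fixed multiplier $\bar\lambda_\epsilon \in \partial j(\bar u_\epsilon)$, with the adjoint state identified through $G_{\chi}^*$ exactly as the paper intends. The one step you flag as delicate is handled by the standard observation that $j'(\bar u_\epsilon; u-\bar u_\epsilon) \leq j(u)-j(\bar u_\epsilon)$, so $\bar u_\epsilon$ globally minimizes the convex functional $u \mapsto \langle G_\epsilon'(\bar u_\epsilon),u\rangle + \kappa j(u)$ over $U_{ad}$ and the Moreau--Rockafellar rule applies.
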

We now have everything in hand to prove the first-order optimality system, which is the main result in the section.
The presence of Clarke’s subdifferential $\partial_C$ in the following conditions justifies the term \emph{C-stationarity}
conditions.
\begin{theorem}[first-order optimality system after passing to the limit; C-stationarity] \label{thm:1st-OS}
	Let $\bar u$ be a local minimizer of \eqref{eq:P} and let $\bar y := S(\bar u)$. 
	Then there exist an adjoint state $\bar p \in H^1_0(\Omega) \cap C(\overline\Omega)$ and multipliers $\bar \lambda \in \partial j(\bar u)$ and $\chi \in L^\infty(\Omega)$ satisfying
	\begin{subequations}
		\label{eq:1st-OS}
		\begin{align}
			&
			%\left\{
			%\begin{aligned}
			%	A^* \bar p +  \chi \bar p &= \frac{\partial L}{\partial y}(x,\bar y) && \text{in } \Omega, \\
			%	\bar p &=0 && \text{on } \partial\Omega,
			%\end{aligned}
			%\right.  
			A^* \bar p +  \chi \bar p  = \frac{\partial L}{\partial y}(x,\bar y) \, \text{in } \Omega, \quad
			\bar p  =0 \, \text{on } \partial\Omega,
			\label{eq:adjoint-OS} \\
			& \chi(x) \in \partial_C f(\bar y(x)) \quad \text{for a.a. } x \in \Omega, \label{eq:Clarke-multiplier} \\
			&\int_\Omega \left(\bar p + \nu \bar u + \kappa \bar \lambda \right)\left( u - \bar u \right) \dx \geq 0 \quad \text{for all } u \in {U}_{ad}. \label{eq:normal-OS}
		\end{align}
	\end{subequations}
	Moreover, if $\lambda^N(\{ \bar y \in E_f\}) =0$, then $\chi(x) = \1_{\{ \bar y \notin E_f \}}(x) f'(\bar y(x))$ for a.a. $x \in \Omega$.
\end{theorem}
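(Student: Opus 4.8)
The plan is to pass to the limit $\epsilon\to0^+$ in the regularized optimality system \eqref{eq:1st-OS-reg} supplied by \cref{prop:OS-reg}, so the whole proof is a sequence of limit passages in which all delicate points are concentrated in the treatment of the coefficient $f_\epsilon'(\bar y_\epsilon)$. First I would establish the strong convergence $\bar u_\epsilon\to\bar u$ in $L^2(\Omega)$, and hence $\bar y_\epsilon:=S_\epsilon(\bar u_\epsilon)\to\bar y$ in $H^1_0(\Omega)\cap C(\overline\Omega)$. Since $U_{ad}$ is bounded, $\{\bar u_\epsilon\}$ has a weak accumulation point $u^*\in U_{ad}$, and along the corresponding subsequence \cref{thm:S-reg}\ref{item:w2s-converge-main} gives $\bar y_\epsilon\to S(u^*)$. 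Comparing $H_\epsilon(\bar u_\epsilon)\le H_\epsilon(\bar u)=J_\epsilon(\bar u)$ with $J_\epsilon(\bar u)\to J(\bar u)$ (from \cref{thm:S-reg}\ref{item:S-regularization}), and using the weak lower semicontinuity of $\tfrac{\nu}{2}\|\cdot\|_{L^2}^2$, of $j$, and of the penalty $\tfrac12\|\cdot-\bar u\|_{L^2}^2$, together with the local optimality of $\bar u$, forces $u^*=\bar u$ and $\|\bar u_\epsilon\|_{L^2}\to\|\bar u\|_{L^2}$, whence the convergence is strong.

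Next I would set $\chi_\epsilon:=f_\epsilon'(\bar y_\epsilon)$. Because each $f_i$ is increasing, $f_\epsilon$ is nondecreasing, and the Lipschitz bound \eqref{eq:f_regu_Lip} gives $0\le\chi_\epsilon\le C_M$ a.e., so, up to a subsequence, $\chi_\epsilon$ converges weakly-$\ast$ in $L^\infty(\Omega)$ to some $\chi$ with $0\le\chi\le C_M$. Likewise $|\bar\lambda_\epsilon|\le1$ yields $\bar\lambda_\epsilon$ converging weakly-$\ast$ to $\bar\lambda$ with $|\bar\lambda|\le1$, while the uniform $H^1_0(\Omega)\cap C(\overline\Omega)$ bound on $\bar p_\epsilon=G_{\chi_\epsilon}^*\!\big(\frac{\partial L}{\partial y}(\cdot,\bar y_\epsilon)\big)$ (\cref{def:sol-oper-adjoint} plus the Stampacchia estimate, using $\chi_\epsilon\ge0$ and the boundedness of the right-hand sides) gives $\bar p_\epsilon\rightharpoonup\bar p$ in $H^1_0(\Omega)$ and, by compactness, $\bar p_\epsilon\to\bar p$ in $L^2(\Omega)$. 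Passing to the limit in the weak form of \eqref{eq:adjoint-OS-reg} is then routine once the product is handled: writing $\chi_\epsilon\bar p_\epsilon=\chi_\epsilon(\bar p_\epsilon-\bar p)+\chi_\epsilon\bar p$, the first term vanishes in $L^2(\Omega)$ (bounded times strongly null) and the second converges weakly to $\chi\bar p$, which yields \eqref{eq:adjoint-OS}; that $\bar p\in C(\overline\Omega)$ follows from $\chi\ge0$ and $\frac{\partial L}{\partial y}(\cdot,\bar y)\in L^2(\Omega)\hookrightarrow W^{-1,p}(\Omega)$.

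The crux is \eqref{eq:Clarke-multiplier}, i.e. identifying the weak-$\ast$ limit $\chi$ with an element of $\partial_C f(\bar y(\cdot))$, and I expect this to be the only genuinely delicate step. I would argue that the set $\mathcal K:=\{w\in L^2(\Omega)\mid w(x)\in\partial_C f(\bar y(x))\text{ a.e.}\}$ is convex and closed (each $\partial_C f(\bar y(x))$ is a compact interval), hence weakly closed, and then show $\mathrm{dist}_{L^2}(\chi_\epsilon,\mathcal K)\to0$. For the latter, fix $x$ and write $f_\epsilon'(\bar y_\epsilon(x))=\int_{-1}^1\psi(\tau)f'(\bar y_\epsilon(x)-\epsilon\tau)\,d\tau$, a weighted average of the a.e.-defined $f'$ over arguments in $[\bar y_\epsilon(x)-\epsilon,\bar y_\epsilon(x)+\epsilon]$, an interval shrinking to $\{\bar y(x)\}$ by uniform convergence of $\bar y_\epsilon$. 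Upper semicontinuity of $\partial_C f$ then places, for every $\eta>0$ and all small $\epsilon$, each such value of $f'$ in the convex set $\partial_C f(\bar y(x))+[-\eta,\eta]$, hence also their average, giving $\mathrm{dist}(\chi_\epsilon(x),\partial_C f(\bar y(x)))\to0$ for every $x$. Since these distances are bounded by $C_M$, dominated convergence yields $\|\chi_\epsilon-\tilde\chi_\epsilon\|_{L^2}\to0$ for the pointwise projections $\tilde\chi_\epsilon\in\mathcal K$, so $\tilde\chi_\epsilon\rightharpoonup\chi$ and weak closedness of $\mathcal K$ gives $\chi\in\mathcal K$, which is \eqref{eq:Clarke-multiplier}.

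It remains to pass to the limit in \eqref{eq:normal-OS-reg}. First I would verify $\bar\lambda\in\partial j(\bar u)$ by letting $\epsilon\to0^+$ in the subgradient inequality $j(v)\ge j(\bar u_\epsilon)+\int_\Omega\bar\lambda_\epsilon(v-\bar u_\epsilon)\dx$, using the continuity of $j$ together with the strong convergence of $\bar u_\epsilon$ and the weak-$\ast$ convergence of $\bar\lambda_\epsilon$. Then the strong $L^2$-convergence of $\bar p_\epsilon$ and $\bar u_\epsilon$, the fact that $\bar u_\epsilon-\bar u\to0$, and the weak-$\ast$ convergence of $\bar\lambda_\epsilon$ make every term of \eqref{eq:normal-OS-reg} converge, producing \eqref{eq:normal-OS}. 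Finally, when $\lambda^N(\{\bar y\in E_f\})=0$, the averaging argument of the previous paragraph degenerates to genuine convergence $\chi_\epsilon\to f'(\bar y)$ on the full-measure set $\{\bar y\notin E_f\}$, where $f$ is $C^1$ near $\bar y(x)$, so $\chi=\1_{\{\bar y\notin E_f\}}f'(\bar y)$ a.e., completing the proof.
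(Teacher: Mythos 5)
Most of your limit passages are sound, and your averaging/upper-semicontinuity identification of the weak-$*$ limit $\chi$ with an element of $\partial_C f(\bar y(\cdot))$ is a nice self-contained substitute for the paper's citation of Tiba's theorem. The genuine gap is in your very first step, the claim that $\bar u_\epsilon \to \bar u$ strongly in $L^2(\Omega)$. The comparison $H_\epsilon(\bar u_\epsilon)\leq H_\epsilon(\bar u)$ presupposes that $\bar u_\epsilon$ is a \emph{global} minimizer of $H_\epsilon$ over all of $U_{ad}$; passing to the limit along a weakly convergent subsequence then yields only $J(u^*)+\frac{1}{2}\norm{u^*-\bar u}_{L^2(\Omega)}^2\leq J(\bar u)$. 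To conclude $u^*=\bar u$ from this you would need $J(\bar u)\leq J(u^*)$, but $\bar u$ is only a \emph{local} minimizer of \eqref{eq:P}, so this inequality is available only if $u^*$ already lies in the (unknown) neighborhood where $\bar u$ is optimal. If $J$ has a strictly deeper minimizer elsewhere in $U_{ad}$, the global minimizers of $H_\epsilon$ track that point and $u^*\neq\bar u$: the Tikhonov-type penalty $\frac{1}{2}\norm{\cdot-\bar u}_{L^2(\Omega)}^2$ can only break ties among global minimizers of $J$; it cannot overcome a strict gap in objective values. Alternatively, if you take $\bar u_\epsilon$ to be merely the local minimizer of \eqref{eq:P-reg} whose existence \cref{prop:OS-reg} asserts, then the inequality $H_\epsilon(\bar u_\epsilon)\leq H_\epsilon(\bar u)$ is unavailable and you have no control at all on where $\bar u_\epsilon$ sits, so no convergence to $\bar u$ can be inferred.

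This is precisely what the paper's auxiliary problem \eqref{eq:P-reg-auxi} is designed to repair: fix $r>0$ so that $\bar u$ is \emph{globally} optimal for $J$ on $U_{ad}\cap \overline B_{L^2(\Omega)}(\bar u,r)$, take $\bar u_\epsilon$ to be a global minimizer of $H_\epsilon$ over this localized admissible set, and combine the uniform estimate $|J_\epsilon(u)-J(u)|\leq C_M\epsilon$ on $U_{ad}$ (from \ref{item:S-regularization} in \cref{thm:S-reg} and \cref{ass:integrand}) with the global optimality of $\bar u$ on the ball to obtain the quantitative bound $\norm{\bar u_\epsilon-\bar u}_{L^2(\Omega)}\leq \sqrt{4C_M\epsilon}$. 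This bound does double duty: it gives the strong convergence you need, and it shows that for small $\epsilon$ the ball constraint is inactive at $\bar u_\epsilon$, so $\bar u_\epsilon$ is a local minimizer of the unconstrained regularized problem \eqref{eq:P-reg} and the optimality system \eqref{eq:1st-OS-reg} of \cref{prop:OS-reg} is legitimately invoked at this specific sequence. Once this localization device is inserted, the remaining steps of your proposal (adjoint estimate, weak-$*$ extraction of $\chi$ and $\bar\lambda$, identification of the Clarke inclusion, and the final limit passages) go through essentially as you wrote them.
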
   
\begin{proof}
	The last assertion is a direct consequence of \eqref{eq:Clarke-multiplier}. We now prove the optimality system \eqref{eq:1st-OS}. To this end, we employ some arguments similar to those used to investigate a non-smooth semilinear optimal control problem without the $L^1$-term \cite[Thm.~4.4]{Constantin2018}.
	Since $\bar u$ is a local minimizer of \eqref{eq:P}, there exists a constant $\rho>0$ such that
	\begin{equation} \nonumber
		J_\kappa(\bar u) \leq J_\kappa(u) = F(u) + \kappa j(u) \quad \text{for all } u \in U_{ad} \, \text{with } \norm{u - \bar u}_{L^2(\Omega)} \leq \rho.
	\end{equation} 
	Consider the following auxiliary optimal control problem of \eqref{eq:P-reg}:
	\begin{equation}
		\label{eq:P-reg-auxi}
		\tag{P$_{\kappa,\epsilon,\rho}$} 
		\min\{ H_\epsilon(u): u \in U_{ad}, \norm{ u - \bar u}_{L^2(\Omega)} \leq \rho \}.
	\end{equation}
	Note that $H_\epsilon$ depends also on the parameter $\kappa$.
	Obviously, there exists at least one global minimizer $\bar u_\epsilon$ of \eqref{eq:P-reg-auxi}. Moreover, as a result of assertion \ref{it:Lipschitz-app} in \cref{prop:control-to-state} and of assertion \ref{item:S-regularization} in \cref{thm:S-reg} there is a constant $M>0$ such that
	\begin{equation}
		\label{eq:y-bound-uniform}
		\norm{S(u)}_{C(\overline\Omega)}, \norm{S_\epsilon(u)}_{C(\overline\Omega)} \leq M \quad \text{for all } u \in U_{ad}.
	\end{equation}
	Then for any $u \in U_{ad}$, there holds
	\begin{align*}
		|J_{\kappa,\epsilon}(u) - J_\kappa(u)|&  = |F_\epsilon(u) - F(u)| 
		\leq \int_\Omega | L(x, S_\epsilon(u)) - L(x, S(u)) | \dx \\
		& \leq  \int_\Omega \int_0^1 |\frac{\partial L}{\partial y}(x, S(u) + \theta (S_\epsilon(u) - S(u)))(S_\epsilon(u) - S(u))| d\theta \dx \\
		& \leq \norm{\phi_M}_{L^2(\Omega)} \norm{S_\epsilon(u) - S(u)}_{L^2(\Omega)}
	\end{align*}
	with $\phi_M$ given in \cref{ass:integrand}. Combining this with \ref{item:S-regularization} in \cref{thm:S-reg} yields for  all $u \in U_{ad}$ and for some constant $C_M >0$ that
	%\begin{equation} \nonumber
	$|J_{\kappa,\epsilon}(u) - J_\kappa(u)| \leq C_M \epsilon$. 
	%\end{equation} 
	In particular, there holds
	\begin{equation} \nonumber
		H_\epsilon (\bar u) = J_{\kappa,\epsilon} (\bar u) \leq C_M \epsilon + J_\kappa(\bar u).
	\end{equation} 
	Furthermore, for any $u \in U_{ad}$, one has
	\begin{align*}
		H_\epsilon(u) & = J_{\kappa,\epsilon}(u) + \frac{1}{2}\norm{u - \bar u}_{L^2(\Omega)}^2  \geq J_\kappa(u) + \frac{1}{2}\norm{u - \bar u}_{L^2(\Omega)}^2 - C_M \epsilon.
	\end{align*}
	We then deduce from the optimality of $\bar u$ that 
	\begin{align*}
		H_\epsilon (\bar u) & \leq C_M \epsilon + J_\kappa(\bar u)  \leq C_M \epsilon + J_\kappa(u) \leq H_\epsilon(u) - \frac{1}{2}\norm{u - \bar u}_{L^2(\Omega)}^2 + 2C_M \epsilon
	\end{align*}
	for any $u \in U_{ad}$ with $\norm{ u - \bar u}_{L^2(\Omega)} \leq \rho$. This implies that
	\begin{equation} \nonumber
		H_\epsilon (\bar u) < H_\epsilon(u) \quad \text{for all } u \in U_{ad} \, \text{with } \sqrt{4C_M \epsilon} < \norm{ u - \bar u}_{L^2(\Omega)} \leq \rho.
	\end{equation} 
	We can conclude from the optimality of $\bar u_\epsilon$ that
	\begin{equation}
		\label{eq:u-reg-esti}
		\norm{ \bar u_\epsilon - \bar u}_{L^2(\Omega)} \leq \sqrt{4C_M \epsilon}.
	\end{equation}
	We now prove that $\bar u_\epsilon$ is a local minimizer of \eqref{eq:P-reg} for sufficiently small $\epsilon >0$. To that end, let $u \in U_{ad}$ be arbitrary with $\| u - \bar u_\epsilon \|_{L^2(\Omega)} < \frac{\rho}{2}$. For $\epsilon$ small enough, we have
	\begin{equation*}
		\| u - \bar u \|_{L^2(\Omega)} \leq \| u - \bar u_\epsilon \|_{L^2(\Omega)} + \| \bar u - \bar u_\epsilon \|_{L^2(\Omega)} < \frac{\rho}{2} + \frac{\rho}{2} =\rho,
	\end{equation*}
	which gives that $u$ is a feasible point of the problem \eqref{eq:P-reg-auxi}. The global optimality of $\bar u_\epsilon$ for \eqref{eq:P-reg-auxi} therefore implies that
		$H_\epsilon(\bar u_\epsilon) \leq H_\epsilon(u)$. 
	This show that $\bar u_\epsilon$ is a local minimizer of the problem \eqref{eq:P-reg}.
	%\textcolor{blue}{%text}
	On the other hand, by extracting a subsequence, denoted in the same way, we derive from \eqref{eq:u-reg-esti} that $\bar u_\epsilon \to \bar u$ a.a. in $\Omega$. By combining this with the boundedness in $L^\infty(\Omega)$ of $\{\bar u_\epsilon\}$, we deduce from Lebesgue's dominated convergence theorem that
	\begin{equation}
		\label{eq:u-epsilon-limit-barr}
		\norm{ \bar u_\epsilon - \bar u}_{L^\barr(\Omega)} \to 0.
	\end{equation}
	%}%

	It remains to prove the existence of the desired adjoint state and multipliers. For that purpose, by virtue of  \cref{prop:OS-reg}, there exist $\bar p_\epsilon \in H^1_0(\Omega) \cap C(\overline\Omega)$, $\bar \lambda_\epsilon \in \partial j(\bar u_\epsilon)$ that satisfy \eqref{eq:1st-OS-reg}. Since $|\bar \lambda_\epsilon| \leq 1$ a.a. in $\Omega$, we can take a subsequence if necessary and deduce that
	\begin{equation}
		\label{eq:lambda-converge}
		\bar \lambda_\epsilon \rightharpoonup^* \bar \lambda \quad \text{in} \quad L^\infty(\Omega)
	\end{equation}
	for some $\bar \lambda \in L^\infty(\Omega)$. From 
	%\textcolor{blue}{%text}
		the definition of the subdifferential 
%}%
	of a convex functional, there holds
	\begin{equation} \nonumber
		\int_\Omega \bar\lambda_\epsilon(x)(u(x) - \bar u_\epsilon(x)) \dx \leq j(u) - j(\bar u_\epsilon) \quad \text{for all } u \in L^1(\Omega).
	\end{equation} 
	Letting $\epsilon \to 0^+$ and using \eqref{eq:u-epsilon-limit-barr} and \eqref{eq:lambda-converge}, one has
	\begin{equation} \nonumber
		\int_\Omega \bar\lambda(x)(u(x) - \bar u(x)) \dx \leq j(u) - j(\bar u) \quad \text{for all } u \in L^1(\Omega),
	\end{equation} 
	which yields $\bar \lambda \in \partial j(\bar u)$. Besides, combining \eqref{eq:u-epsilon-limit-barr} with \ref{item:w2s-converge-main} in \cref{thm:S-reg} yields 
	\begin{equation}
		\label{eq:y-convergence}
		\bar y_\epsilon \to \bar y \quad \text{in } H^1_0(\Omega) \cap C(\overline\Omega).
	\end{equation}
	From \eqref{eq:y-bound-uniform} and \eqref{eq:f_regu_Lip}, we derive that
	$|f_\epsilon'(\bar y_\epsilon(x))| \leq C$ for a.a. $x \in \Omega$ and for some constant $C>0$ independent of $\epsilon$. We can thus extract a subsequence, denoted in the same way, such that
	\begin{equation} \label{eq:Clarke_conv}
		f_\epsilon'(\bar y_\epsilon) \rightharpoonup^* \chi \quad \text{in } L^\infty(\Omega) \quad \text{as } \epsilon \to 0^+
	\end{equation}
	for some $\chi\in L^\infty(\Omega)$. 
	Combining this with the limit \eqref{eq:y-convergence} and the definition of $f_\epsilon$, we derive from \cite[Chap.~I, Thm.~3.14]{Tiba1990} that $\chi(x) \in \partial_C f(\bar y(x))$ for a.a. $x \in \Omega$. Then \eqref{eq:Clarke-multiplier} follows. 
	
	We now test \eqref{eq:adjoint-OS-reg} by $\bar p_\epsilon$ and use the non-negativity of $f_\epsilon'$ and the ellipticity condition in \cref{ass:ellipticity} to have
	\begin{equation} \nonumber
		\Lambda \norm{\nabla \bar p_\epsilon}_{L^2(\Omega)}^2 \leq \int_\Omega \frac{\partial L}{\partial y}(x, \bar y_\epsilon) \bar p_\epsilon \dx \leq  \norm{\bar \phi_M }_{L^2(\Omega)}\norm{\bar p_\epsilon}_{L^2(\Omega)} 
	\end{equation} 
	with $\phi_M$ defined as in \cref{ass:integrand}. This and the  Poincar\'{e} inequality show the boundedness of $\{\bar p_\epsilon \}$ in $H^1_0(\Omega)$. We can thus extract a subsequence, denoted in the same way, such that
	\begin{equation}
		\label{eq:p-convergence}
		\bar p_\epsilon \rightharpoonup \bar p \, \text{in } H^1_0(\Omega) \quad \text{and} \quad \bar p_\epsilon \to \bar p \, \text{in } L^2(\Omega)
	\end{equation}
	for some $\bar p \in H^1_0(\Omega)$. Letting $\epsilon \to 0^+$ in \eqref{eq:adjoint-OS-reg} and \eqref{eq:normal-OS-reg} and exploiting \eqref{eq:u-epsilon-limit-barr}, \eqref{eq:lambda-converge}, \eqref{eq:y-convergence},  \eqref{eq:Clarke_conv}, and \eqref{eq:p-convergence}, we obtain \eqref{eq:adjoint-OS} and \eqref{eq:normal-OS}, respectively. 
\end{proof}
\begin{remark}
	\label{rem:normal-OS}
	It is noted that \eqref{eq:normal-OS} is equivalent to
	\begin{equation}
		\label{eq:norm-OS-equi}
		\int_\Omega \left(\bar p + \nu \bar u + \kappa \bar \lambda \right) v \dx \geq 0 \quad \text{for all } v \in \textrm{cl}_{L^\barr(\Omega)}(\cone(U_{ad} - \bar u)),
	\end{equation}
	where $\cone(U_{ad} - \bar u)$ denotes the cone generated via $U_{ad} - \bar u$ and $\textrm{cl}_{L^\barr(\Omega)}(\cone(U_{ad} - \bar u))$ stands for the closure in $L^\barr(\Omega)$ of $\cone(U_{ad} - \bar u)$. 	
	Recall from the classical results on variational analysis that
	\begin{equation}
		\label{eq:tangent-cone}
		\textrm{cl}_{L^\barr(\Omega)}(\cone(U_{ad} - \bar u)) = \left\{ v \in L^\barr(\Omega) \mid v \geq 0 \, \text{a.a. in } \{\bar u = \alpha\} \, \text{and } v \leq 0 \, \text{a.a. in } \{\bar u = \beta\} \right\};
	\end{equation}
	see, e.g. \cite[Prop.~2.55]{BonnansShapiro2000} and \cite[Lem.~4.11~(i)]{BayenBonnansSilva2014}.
\end{remark}
The next relations are immediate consequences of \eqref{eq:normal-OS} and the fact that $\alpha < 0 < \beta$. 
\begin{corollary}
	\label{cor:projections}
	Let $\bar u$, $\bar y$, $\bar p$, $\bar \lambda$, and $\chi$ be defined as in the previous theorem. Then, for a.a. $x \in \Omega$, the following relations hold:
	\begin{subequations}
		\label{eq:1st-projection}
		\begin{align}
		& \bar u(x) = \proj_{[\alpha,\beta]}\left(- \frac{1}{\nu}(\bar p(x) + \kappa \bar \lambda(x)) \right), \label{eq:projection-u} \\
		& \bar u(x) = 0 \quad \Leftrightarrow \quad |\bar p(x)| \leq \kappa, \label{eq:sparsity}\\
		\text{and} \quad & \lambda(x) = \proj_{[-1,+1]}\left(-\frac{1}{\kappa} \bar p(x)\right) \quad \text{whenever} \quad \kappa >0. \label{eq:projection-lambda}
		\end{align}
	\end{subequations}
	Moreover, if $\lambda^N(\{ \bar y \in E_f\}) =0$, then the adjoint state $\bar p$ and the multiplier $\bar \lambda$ are uniquely determined for any local minimizer $\bar u$.
\end{corollary}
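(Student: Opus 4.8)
The plan is to reduce the integral variational inequality \eqref{eq:normal-OS} to a pointwise condition and then read off the three relations from the structure of the metric projection together with the characterization \eqref{eq:subderivative-j-formulation} of $\partial j(\bar u)$.

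First I would localize \eqref{eq:normal-OS}. Since $U_{ad}$ is defined by the pointwise box constraints $\alpha \le u \le \beta$ and $\bar p + \nu\bar u + \kappa\bar\lambda \in L^2(\Omega)$, a standard argument (testing with controls $u$ that coincide with $\bar u$ off an arbitrarily small measurable set) yields the pointwise variational inequality
\begin{equation*}
	\left(\bar p(x) + \nu\bar u(x) + \kappa\bar\lambda(x)\right)\left(s - \bar u(x)\right) \ge 0 \quad \text{for all } s \in [\alpha,\beta] \text{ and a.a. } x \in \Omega.
\end{equation*}
Rewriting this as $\left(\bar u(x) - \left(-\frac{1}{\nu}(\bar p(x)+\kappa\bar\lambda(x))\right)\right)(s-\bar u(x)) \ge 0$ for all $s\in[\alpha,\beta]$ is precisely the obtuse-angle characterization of the projection onto $[\alpha,\beta]$, which gives \eqref{eq:projection-u}.

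Next, the sparsity relation \eqref{eq:sparsity} and the multiplier formula \eqref{eq:projection-lambda} follow by combining \eqref{eq:projection-u} with \eqref{eq:subderivative-j-formulation} and the fact that $0$ lies in the interior of $[\alpha,\beta]$, so that $\proj_{[\alpha,\beta]}(s)$ has the same sign as $s$ and vanishes iff $s=0$. I would distinguish the three cases $\bar u(x)>0$, $\bar u(x)<0$, $\bar u(x)=0$. If $\bar u(x)=0$, then the vanishing of the projection forces $\bar p(x)+\kappa\bar\lambda(x)=0$, whence $\bar\lambda(x)=-\bar p(x)/\kappa \in [-1,1]$ yields $|\bar p(x)|\le\kappa$; conversely, if $|\bar p(x)|\le\kappa$, then assuming $\bar u(x)>0$ forces $\bar\lambda(x)=1$ and, via \eqref{eq:projection-u}, $\bar p(x)<-\kappa$, a contradiction (symmetrically for $\bar u(x)<0$). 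The same case distinction establishes $\bar\lambda(x)=\proj_{[-1,+1]}(-\bar p(x)/\kappa)$: on $\{\bar u>0\}$ one has $\bar p(x)<-\kappa$ and $\bar\lambda(x)=1$, on $\{\bar u<0\}$ one has $\bar p(x)>\kappa$ and $\bar\lambda(x)=-1$, and on $\{\bar u=0\}$ one has $-\bar p(x)/\kappa\in[-1,1]$ with $\bar\lambda(x)=-\bar p(x)/\kappa$.

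Finally, for the uniqueness claim under $\lambda^N(\{\bar y\in E_f\})=0$, I would invoke \cref{prop:G-diff-control2state}: this measure condition pins down $\chi(x)=\1_{\{\bar y\notin E_f\}}(x)f'(\bar y(x))$ uniquely, and $\chi\ge 0$ since each $f_i$ is monotonically increasing. As $\bar y=S(\bar u)$ is fixed for the given $\bar u$, the adjoint equation \eqref{eq:adjoint-OS} becomes a linear elliptic problem with fixed nonnegative coefficient $\chi$, so $\bar p=G_\chi^{*}\!\left(\frac{\partial L}{\partial y}(\cdot,\bar y)\right)$ is uniquely determined (cf. \cref{def:sol-oper-adjoint} and \cref{cor:G-diff}); the multiplier $\bar\lambda$ is then uniquely determined through \eqref{eq:projection-lambda}. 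The only genuinely delicate point is the localization step producing the pointwise inequality from \eqref{eq:normal-OS}; everything afterward is elementary real-variable bookkeeping, so I anticipate no serious obstacle.
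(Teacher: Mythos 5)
Your proposal is correct and takes essentially the same approach as the paper: the projection identities follow from the standard pointwise localization of \eqref{eq:normal-OS} combined with \eqref{eq:subderivative-j-formulation} (a step the paper disposes of by citing a standard reference), and your uniqueness chain --- the measure-zero condition fixes $\chi$ a.e., hence $\bar p$ is unique via the linear adjoint equation \eqref{eq:adjoint-OS}, hence $\bar\lambda$ is unique via \eqref{eq:projection-lambda} --- is exactly the paper's argument. The only slip is a citation: the identification $\chi = \1_{\{\bar y\notin E_f\}}f'(\bar y)$ under $\lambda^N(\{\bar y \in E_f\})=0$ is the final assertion of \cref{thm:1st-OS} (a consequence of the pointwise Clarke-subdifferential condition \eqref{eq:Clarke-multiplier}, since $\partial_C f$ is a singleton off $E_f$), not of \cref{prop:G-diff-control2state}, which concerns the G\^{a}teaux-differentiability of $S$.
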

\begin{proof}
	The argument for \eqref{eq:1st-projection} is standard; see, e.g. \cite[Cor.~3.2]{CasasHerzogWachsmuth2012}. Assume now that $\lambda^N(\{ \bar y \in E_f\}) =0$. Thanks to \cref{thm:1st-OS}, $\chi= \1_{\{ \bar y \notin E_f\}} f'(\bar y)$ a.a. in $\Omega$ and $\bar p$ is thus unique via \eqref{eq:adjoint-OS}. Therefore, $\bar \lambda$ is uniquely determined by the representation formula \eqref{eq:projection-lambda}.
\end{proof}

\begin{remark}
	\label{rem:sparsity}
	As observed in \cite{Stadler2009}, the relation \eqref{eq:sparsity} leads to the sparsity of the local optimal controls: 
	%\textcolor{blue}{%text}
	With increasing $\kappa$, 
%}%
	the support of the local optimal controls shrinks to have the measure zero for $\kappa$ large enough. 
\end{remark}

\section{Second-order optimality conditions} \label{sec:2ndOS}
One of our main aims of this paper is to derive second-order necessary and sufficient optimality conditions 
in explicit forms, 
%\textcolor{blue}{%text}
	which could be of great interest for showing error estimates for the numerical approximation
%}%
of \eqref{eq:P}; see, e.g. \cite{ClasonNhuRosch2022,ClasonNhuRosch2023_part2}. 
For this purpose, we first introduce a non-smooth \emph{curvature functional} that characterizes the  second-order generalized derivative of the smooth ingredient $F$ of objective functional $J$ in critical directions. 
We then establish the second-order optimality conditions containing the curvature functional. Finally, after explicitly computing this curvature, the explicit second-order conditions are consequently derived and include the integrals over $(N-1)$-dimensional manifolds.

\subsection{Non-smooth curvature functional}\label{sec:2nd-OC:curvature}
Before establishing the non-smooth curvature functional of $F$ we need some additional notation.
For any $t>0$ and $u, h \in L^\barr(\Omega)$
we now define the following function on $\Omega$
\begin{multline*} %\label{eq:zeta-func}
	\zeta(u;t,h)  = - \sum_{i=1}^\barK \sigma_i (S(u + th) - \tau_i) \left[\1_{ \{S(u) \in (\tau_i, \tau_{i+1}), S(u + th)  \in (\tau_{i}-\epsilon_0, \tau_i) \}} \right. \\
	\left. - \1_{ \{S(u) \in (\tau_{i-1}, \tau_{i}),S(u + th)  \in (\tau_{i}, \tau_i+ \epsilon_0) \}} \right],
\end{multline*}
where $\epsilon_0$ is defined as in \eqref{eq:epsilon-zero}.
We then determine, for $\{t_n\} \in c_0^+$, $u, h\in L^\barr(\Omega)$, $p \in L^1(\Omega)$, and $\{h_n\} \subset L^\barr(\Omega)$, the extended numbers
\begin{equation}
	\label{eq:key-term-sn}
	\underline{Q}( u, p;\{t_n\}, \{h_n\}) := \liminf_{n \to \infty} \frac{1}{t_n^2} \int_\Omega  p  \zeta ( u;t_n, h_n) \dx
\end{equation}
and
\begin{equation*}
	%\label{eq:key-term}
	\tilde{Q}( u, p; h) := \inf \left\{ \underline{Q}( u, p;\{t_n\}, \{h\}) \mid \{t_n\} \in c_0^+ \right\}.
\end{equation*}
Here $\{h\}$ stands for the constant sequence $h_n \equiv h$ for all $n\geq 1$.

\begin{definition}
	\label{def:curvature-func-app}
	The non-smooth curvature functional $Q$ of $F$ at $u$ for $p \in L^1(\Omega)$ is defined as an extended mapping from $L^\barr(\Omega)$ to $[-\infty, \infty]$ and given by
	\begin{multline*}
		%\label{eq:curvature-func-app}
		Q( u,  p; h) :=  \int_\Omega[ \frac{\partial^2 L}{\partial y^2}(x, S(u)) \left(S'( u; h)\right)^2 + \nu h^2 ] \dx \\
		\begin{aligned}[t]
			- \int_\Omega \1_{\{ S(u) \notin E_f \}}  p f''_{yy}( S(u))\left(S'( u; h)\right)^2 \dx +  2\tilde{Q}( u, p; h), \quad h \in L^\barr(\Omega).
		\end{aligned} 
	\end{multline*}
\end{definition}

\begin{remark}
	\label{rem:max-pde}
	When $\barr =2$ and the state equation is defined by \eqref{eq:max-pde}, i.e., $A = -\Delta$, $f(t) = \max\{0,t\}$, then $K=1$, $\tau_1 = 0$, $f_0(t) = 0$,  $f_1(t) =t$,  $\sigma_1 = -1$, and thus 
	\begin{equation} \nonumber
		Q( u,  p; h) =  \int_\Omega [ \frac{\partial^2 L}{\partial y^2}(x, S(u)) \left(S'( u; h)\right)^2 + \nu h^2 ] \dx  +  2\tilde{Q}( u, p; h).
	\end{equation} 
	This coincides with the one in \cite[Def.~5.1]{Nhu2021Optimization}. 
	
	Moreover, if $f$ is of class $C^2$, then $E_f = \emptyset$ and thus $Q( u,  p; h) = F''( u)h^2$ for $p = G_{\chi} (\frac{\partial L}{\partial y}(\cdot, S(u)))$ with $\chi = f'(S(u))$, see, e.g. \cite[Thm.~2.3]{Casas2008}. In a more general case,  \cref{prop:Q-weak2subder} shows that $Q$ is identical to the strong second subderivative of $F$ under the structural assumption \eqref{eq:structure-non-diff} below.
\end{remark}

%\textcolor{blue}{%text}
	In the remainder of this section, 
%}%
let $\bar u \in U_{ad}$ be an admissible control and let $\bar y:= S(\bar u)$. We need the following structural condition on $\bar y$ to provide some required properties of $\tilde{Q}$. 
\begin{assumption2} 
	\item \label{ass:structure-non-diff}
	Assume that the function $\bar y \in C(\overline\Omega)$ has the property that 
	\begin{equation}
		\label{eq:structure-non-diff}
		\tag{SA}
		\sum_{i =1}^K\lambda^N\left( \{ |\bar y - \tau_i| < \epsilon \} \right) \leq c_s \epsilon
	\end{equation}
	for all $\epsilon \in (0,1)$ {and for some constant $c_s >0$.}
\end{assumption2}
The same condition as \eqref{eq:structure-non-diff} but imposed on an adjoint state was employed in \cite{WachsmuthWachsmuth2011} to derive error
%\textcolor{blue}{%text}
	estimates w.r.t. the regularization parameter 
%}%
for an elliptic optimal control problem. Such a condition was also employed in  \cite{Wachsmuth2013}  to deal with the parameter choice rule for the Tikhonov regularization parameter depending on a posteriori computable quantities. 
In \cite{DeckelnichHinze2012}, a more general version of \eqref{eq:structure-non-diff}  was used to show the a priori error estimates for the approximation of elliptic control problems. 

The assumption \eqref{eq:structure-non-diff} requires that the  set $\{\bar y \in E_f \}$ has measure zero, which implies the G\^{a}teaux-differentiability at $\bar u$ of $S$ according to \cref{prop:G-diff-control2state}.
Thanks to \cref{cor:G-diff},  the functional $F$ is  G\^{a}teaux-differentiable in $\bar u$. Moreover, the derivative in $\bar u$ of $F$ is defined as
\begin{equation*}
	%\label{eq:F-deri}
	F'(\bar u)h = \int_\Omega (\bar p + \nu \bar u)h \dx 
\end{equation*}
for all $h \in L^\barr(\Omega)$, where  
\begin{equation}
	\label{eq:adjoint-state-SA}
	\bar p = G^*_{ \chi}(\frac{\partial L}{\partial y}(\cdot, \bar y)) \in H^1_0(\Omega) \cap C(\overline\Omega)
\end{equation}
with
\begin{equation}
	\label{eq:chi-relation-SA}
	\chi(x) = \1_{\{ \bar y \notin E_f \}} (x)f'(\bar y(x)) \quad \text{for a.a. } x \in \Omega.
\end{equation}
It is noted that under the condition $\lambda^N(\{ \bar y \in E_f \}) =0$, the relation \eqref{eq:Clarke-multiplier} is then identical to \eqref{eq:chi-relation-SA} and thus the adjoint state $\bar p$ determined by \eqref{eq:adjoint-state-SA} satisfies \eqref{eq:adjoint-OS}.

The following result shows the definiteness of $\tilde{Q}$ at $\bar u$ under \eqref{eq:structure-non-diff}.
\begin{lemma}
	\label{lem:Q-defined}
	Under the structural assumption \eqref{eq:structure-non-diff}, there holds    
	\begin{equation*}
		| \tilde{Q}(\bar u,\bar p;h) | \leq c_s  \sigma_{\max} \norm{\bar p}_{L^\infty(\Omega)} \norm{S'(\bar u)h}_{L^\infty(\Omega)}^2
	\end{equation*}    
	for any $h \in L^\barr(\Omega)$ with $\sigma_{\max} := \max\{|\sigma_i|: 1 \leq i \leq K \}$.
\end{lemma}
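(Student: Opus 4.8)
The plan is to bound the integrand $\bar p\,\zeta(\bar u;t,h)$ pointwise, uniformly for small $t>0$, by a quantity of order $t^2$ times the measure controlled by \eqref{eq:structure-non-diff}, and then to pass to the $\liminf$ and the infimum in the definitions of $\underline Q$ and $\tilde Q$. Set $w_t := \tfrac{1}{t}\left(S(\bar u + th) - S(\bar u)\right) = \tfrac{1}{t}\left(S(\bar u + th) - \bar y\right)$. By assertion \ref{it:dir-der-app} of \cref{prop:control-to-state}, $w_t \to S'(\bar u;h)$ in $C(\overline\Omega)$ as $t\to 0^+$ (recall that under \eqref{eq:structure-non-diff} we have $S'(\bar u)h = S'(\bar u;h)$), so in particular $\norm{w_t}_{C(\overline\Omega)} \to \norm{S'(\bar u)h}_{L^\infty(\Omega)}$, and I may assume $\norm{w_t}_{C(\overline\Omega)} < 1$ for $t$ sufficiently small.

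The key geometric step concerns the support of the two indicators in $\zeta$ for a fixed index $i$. On $\{\bar y \in (\tau_i,\tau_{i+1}),\, S(\bar u+th) \in (\tau_i-\epsilon_0,\tau_i)\}$ one has $S(\bar u+th) < \tau_i < \bar y$, whereas on $\{\bar y \in (\tau_{i-1},\tau_i),\, S(\bar u+th) \in (\tau_i,\tau_i+\epsilon_0)\}$ one has $\bar y < \tau_i < S(\bar u+th)$; in either case $\tau_i$ lies strictly between $\bar y(x)$ and $S(\bar u+th)(x)$. Consequently, on the support of either indicator,
\[
	|S(\bar u+th)(x) - \tau_i| \leq |S(\bar u+th)(x) - \bar y(x)| \quad\text{and}\quad |\bar y(x) - \tau_i| \leq |S(\bar u+th)(x) - \bar y(x)|,
\]
and the common right-hand side is $\leq t\norm{w_t}_{C(\overline\Omega)}$. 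The first inequality controls the factor $S(\bar u+th)-\tau_i$ appearing in $\zeta$, while the second shows that the support is contained in $\{|\bar y - \tau_i| < t\norm{w_t}_{C(\overline\Omega)}\}$, which is precisely the set governed by \eqref{eq:structure-non-diff}.

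Combining these observations, and using that the two indicators for each $i$ have disjoint supports, I obtain
\[
	|\zeta(\bar u;t,h)(x)| \leq \sigma_{\max}\, t\norm{w_t}_{C(\overline\Omega)} \sum_{i=1}^K \1_{\{|\bar y - \tau_i| < t\norm{w_t}_{C(\overline\Omega)}\}}(x).
\]
Multiplying by $|\bar p| \leq \norm{\bar p}_{L^\infty(\Omega)}$, integrating over $\Omega$, and invoking \eqref{eq:structure-non-diff} with $\epsilon = t\norm{w_t}_{C(\overline\Omega)} \in (0,1)$ yields
\[
	\frac{1}{t^2}\int_\Omega |\bar p|\,|\zeta(\bar u;t,h)|\dx \leq c_s\,\sigma_{\max}\,\norm{\bar p}_{L^\infty(\Omega)}\,\norm{w_t}_{C(\overline\Omega)}^2.
\]
For any $\{t_n\} \in c_0^+$, letting $n\to\infty$ and using $\norm{w_{t_n}}_{C(\overline\Omega)} \to \norm{S'(\bar u)h}_{L^\infty(\Omega)}$ together with $|\liminf a_n| \leq \limsup|a_n|$ gives $|\underline Q(\bar u,\bar p;\{t_n\},\{h\})| \leq c_s\sigma_{\max}\norm{\bar p}_{L^\infty(\Omega)}\norm{S'(\bar u)h}_{L^\infty(\Omega)}^2$; taking the infimum over $\{t_n\}$ then yields the asserted bound. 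The main obstacle is the geometric step: correctly exploiting that $\tau_i$ is squeezed between $\bar y$ and $S(\bar u+th)$ to get both comparison inequalities at once, and carrying everything out in the uniform topology of $C(\overline\Omega)$ so that \eqref{eq:structure-non-diff} can be applied with a single radius $\epsilon$ independent of $x$.
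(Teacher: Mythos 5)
Your proof is correct and follows essentially the same route as the paper: both hinge on the observation that on the support of either indicator $\tau_i$ is squeezed strictly between $\bar y$ and $S(\bar u+th)$, so the support lies in $\{|\bar y-\tau_i|<\epsilon_n\}$ with $\epsilon_n=\norm{S(\bar u+th)-\bar y}_{C(\overline\Omega)}$ and the factor $S(\bar u+th)-\tau_i$ is of the same size, after which the structural assumption \eqref{eq:structure-non-diff} and the limit $\epsilon_n/t_n\to\norm{S'(\bar u)h}_{L^\infty(\Omega)}$ give the bound. The only cosmetic differences are that you bound $|\zeta|$ pointwise rather than working with the $L^1$ norms of the terms $T_n^i$, and you invoke the Hadamard directional derivative in $C(\overline\Omega)$ where the paper cites G\^{a}teaux differentiability under \eqref{eq:structure-non-diff}; these are equivalent here.
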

\begin{proof}
	By the definition of $\tilde{Q}$, it suffices to prove for any $\{t_n\} \in c_0^+$ that
	\begin{equation}
		\label{eq:Q-bound}
		\left| \underline Q(\bar u,\bar p;\{t_n\},\{h\}) \right| \leq c_s  \sigma_{\max} \norm{\bar p}_{L^\infty(\Omega)} \norm{S'(\bar u)h}_{L^\infty(\Omega)}^2.
	\end{equation}
	To this end, setting $y_n := S(\bar u + t_n h)$ yields $y_n \to \bar y$ in $H^1_0(\Omega) \cap C(\overline\Omega)$. 
	Define the measurable functions
	\begin{equation} \label{eq:Tn-i}
		T_n^{i} :=  (y_n-\tau_i  )( \1_{\{\bar y \in (\tau_i, \tau_{i+1}), y_n \in (\tau_i-\epsilon_0, \tau_i) \}}  - \1_{\{ y_n \in (\tau_{i}, \tau_i+\epsilon_0) , \bar y \in (\tau_{i-1},\tau_i) \}} )
	\end{equation}
	with $1 \leq i \leq K, n \geq 1.$
	We then have from the definition of $\zeta$ that
	\begin{equation}
		\label{eq:zeta-func-expression}
		\zeta(\bar u; t_n, h) = -\sum_{i =1}^K \sigma_i T_n^i.
	\end{equation}
	Moreover, from the definition of $T_n^i$, there holds
	\begin{align*}
		0 \geq {T_n^i} \geq   (y_n-\bar y) ( \1_{\{\bar y \in (\tau_i, \tau_{i+1}), y_n \in (\tau_i-\epsilon_0, \tau_i) \}}  - \1_{\{ y_n \in (\tau_{i}, \tau_i+\epsilon_0) , \bar y \in (\tau_{i-1},\tau_i) \}} )
	\end{align*}
	a.a. in $\Omega$. By setting $\epsilon_n := \norm{y_n - \bar y}_{C(\overline\Omega)}$, a detailed computation yields
	\begin{equation}
		\label{eq:inclusion-key1}
		\{\bar y \in (\tau_i, \tau_{i+1}), y_n \in (\tau_i-\epsilon_0, \tau_i) \} \cup \{ y_n \in (\tau_{i}, \tau_i+\epsilon_0) , \bar y \in (\tau_{i-1},\tau_i) \} \subset \{  |\bar y - \tau_i| < \epsilon_n \}
	\end{equation}
	and thus
	\begin{equation} \label{eq:T-n-esti-almost}
		0 \geq T_n^i \geq - |y_n - \bar y| \1_{ \{ |\bar y - \tau_i| < \epsilon_n \} }. 
	\end{equation}
	Noting that $\epsilon_n \to 0^+$ and applying the structural assumption \eqref{eq:structure-non-diff} then yield
	\begin{equation*}
		%\label{eq:T-n-esti}
		\sum_{i =1}^K \norm{T_n^i}_{L^1(\Omega)} \leq c_s \epsilon_n^2 = c_s \norm{y_n - \bar y}_{C(\overline\Omega)}^2 \quad \text{for $n$ large enough}.
	\end{equation*}
	This implies that
	\begin{equation} \label{eq:zeta-esti}
		|\int_\Omega \bar p \zeta(\bar u;t_n, h) \dx | = | \int_\Omega \sum_{i =1}^K \sigma_i T_n^i \bar p \dx| \leq c_s\norm{\bar p}_{L^\infty(\Omega)} \epsilon_n^2 \sigma_{\max} %\max\{|\sigma_i|: 1 \leq i \leq K \}
	\end{equation}
	for $n$ sufficiently large. 
	Besides, it follows from \cref{prop:G-diff-control2state} that $S$ is G\^{a}teaux-differentiable at $\bar u$ and thus
	$\frac{\epsilon_n}{t_n} \to \norm{S'(\bar u)h}_{L^\infty(\Omega)}$.
	%\begin{equation*}
		%\label{eq:epsilon-n-limit}
	%	\frac{\epsilon_n}{t_n} \to \norm{S'(\bar u)h}_{L^\infty(\Omega)}.
	%\end{equation*}
	We then derive \eqref{eq:Q-bound} from \eqref{eq:zeta-esti} and the definition \eqref{eq:key-term-sn} of $\underline Q$. 
\end{proof}

The positive homogeneity of degree $2$ in $h$ of $Q$ is stated in the following lemma. Its proof is similar to that in \cite[Lem.~5.2]{Nhu2021Optimization} as well as  \cite[Lem.~5.1]{ClasonNhuRosch2020} 
%\textcolor{blue}{%text}
	(see, also \cite[Lem.~3.2 (i)]{ChristofWachsmuth2018})
%}%
and thus omitted. 

\begin{lemma} \label{lem:homogeneity}
	%Assume that \eqref{eq:structure-non-diff} is satisfied. Then, 
	For any $h \in L^\barr(\Omega)$ and any $ t \geq 0$, there hold 
	\begin{equation} \nonumber
		\tilde{Q}(\bar u, \bar p;th) = t^2 \tilde{Q}(\bar u, \bar p;h) \, \text{and} \, Q(\bar u, \bar p;th)  = t^2 Q(\bar u, \bar p;h).
	\end{equation} 
\end{lemma}

The following shows that $\underline{Q}(\bar u, \bar p; \{t_n\}, \cdot)$ is invariant for two sequences possessing the same weak limit. This ensures the weak lower semicontinuity of $\tilde{Q}(\bar u, \bar p; \{t_n\}, \cdot)$ as well as 
${Q}(\bar u, \bar p; \cdot)$. 
\begin{lemma}
	\label{lem:invariant}
	Assume that \eqref{eq:structure-non-diff} is fulfilled. Then, for any sequences $\{t_n\} \in c_0^+$ and $\{h_n\}, \{v_n\} \subset L^\barr(\Omega)$ such that $h_n \rightharpoonup h$ and $v_n \rightharpoonup h$ in $L^\barr(\Omega)$ for some $h \in L^\barr(\Omega)$, there holds
	\begin{equation} \label{eq:weak-invariant}
		\lim_{n \to \infty} \frac{1}{t_n^2} \int_\Omega \bar p \left[ \zeta(\bar u; t_n, h_n) -\zeta(\bar u; t_n, v_n)   \right] \dx = 0
	\end{equation}
	and, in particular, 
	$ %\begin{equation*} %\label{eq:wlsc-semi}
		\underline{Q}(\bar u, \bar p; \{t_n\},\{h_n\}) = \underline{Q}(\bar u, \bar p; \{t_n\},\{h\}) \geq \tilde{Q}(\bar u, \bar p; h).
	$ %\end{equation*}
\end{lemma}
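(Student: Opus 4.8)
The plan is to reduce the claim to a pointwise/integrand-level estimate on the difference $\zeta(\bar u; t_n, h_n) - \zeta(\bar u; t_n, v_n)$, exploiting the fact that $\zeta$ is supported on thin "crossing" sets near the level sets $\{\bar y = \tau_i\}$. First I would recall from \cref{prop:G-diff-control2state} that under \eqref{eq:structure-non-diff} the operator $S$ is G\^ateaux-differentiable at $\bar u$, and from \cref{prop:control-to-state}\ref{it:wlsc-app} that $S'(\bar u; \cdot)$ is weakly-to-strongly continuous. Hence, writing $y_n := S(\bar u + t_n h_n)$ and $w_n := S(\bar u + t_n v_n)$, the Hadamard differentiability \eqref{eq:weak-strong-Hadamard-app} gives
\[
	\frac{y_n - \bar y}{t_n} \to S'(\bar u)h \quad \text{and} \quad \frac{w_n - \bar y}{t_n} \to S'(\bar u)h \quad \text{in } C(\overline\Omega),
\]
so that $\norm{y_n - w_n}_{C(\overline\Omega)} = o(t_n)$. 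This is the key analytic input: the two perturbed states are $o(t_n)$-close uniformly.

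Next I would bound the integrand. Using the expression \eqref{eq:zeta-func-expression} $\zeta(\bar u; t_n, h_n) = -\sum_{i=1}^K \sigma_i T_n^i$ with $T_n^i$ defined as in \eqref{eq:Tn-i} (and the analogous $\tilde T_n^i$ built from $w_n$), I would split the difference $T_n^i - \tilde T_n^i$ into two parts: one where the indicator sets for $y_n$ and $w_n$ agree, on which the difference is simply $(y_n - w_n)$ times a characteristic function and is therefore $o(t_n)$ pointwise; and one where the indicators disagree, which forces $y_n$ and $w_n$ to lie on opposite sides of a threshold $\tau_i$ or $\tau_i \pm \epsilon_0$, so that the symmetric-difference set is contained in $\{|\bar y - \tau_i| < \epsilon_n\}$ by an inclusion of the type \eqref{eq:inclusion-key1}, where $\epsilon_n := \max\{\norm{y_n - \bar y}_{C(\overline\Omega)}, \norm{w_n - \bar y}_{C(\overline\Omega)}\} \to 0^+$. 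On that thin set the factor $(y_n - \tau_i)$ (resp. $(w_n - \tau_i)$) is bounded by $\epsilon_n$, giving a pointwise bound of order $\epsilon_n \1_{\{|\bar y - \tau_i| < \epsilon_n\}}$.

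Combining these with the structural assumption \eqref{eq:structure-non-diff}, which controls $\sum_i \lambda^N(\{|\bar y - \tau_i| < \epsilon_n\}) \leq c_s \epsilon_n$, I would obtain
\[
	\Big| \int_\Omega \bar p \big[ \zeta(\bar u; t_n, h_n) - \zeta(\bar u; t_n, v_n) \big] \dx \Big| \leq C \norm{\bar p}_{L^\infty(\Omega)} \Big( \norm{y_n - w_n}_{C(\overline\Omega)}\, \lambda^N(\Omega) + \epsilon_n \cdot c_s \epsilon_n \Big).
\]
Dividing by $t_n^2$, the first term is $o(t_n)\cdot O(1)/t_n^2 = o(1/t_n)$ — wait, this needs care — so I would arrange the agreement-set estimate to also live on the thin crossing set rather than on all of $\Omega$, since $T_n^i$ is itself supported where $y_n$ crosses a threshold, i.e. on $\{|\bar y - \tau_i| < \epsilon_n\}$. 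Thus both contributions are bounded by $C\epsilon_n \norm{y_n - w_n}_{C(\overline\Omega)} + C\epsilon_n^2 \cdot (\text{measure})$; after dividing by $t_n^2$ and using $\epsilon_n = O(t_n)$ together with $\norm{y_n - w_n}_{C(\overline\Omega)} = o(t_n)$, each term vanishes as $n \to \infty$, yielding \eqref{eq:weak-invariant}. The equality of the two $\underline{Q}$ values and the inequality with $\tilde{Q}$ then follow immediately from the definitions \eqref{eq:key-term-sn} and the infimum defining $\tilde{Q}$.

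The main obstacle I anticipate is the bookkeeping on the symmetric-difference sets: ensuring that \emph{both} the agreement and disagreement contributions are confined to a set of measure $O(\epsilon_n)$ so that, after division by $t_n^2 \sim \epsilon_n^2/\norm{S'(\bar u)h}^2$, the $o(t_n)$ gap between $y_n$ and $w_n$ is genuinely absorbed. The delicate point is that the naive bound on the agreement set is over all of $\Omega$ and only saves one power of $t_n$; the resolution is that $T_n^i$ vanishes off the crossing region, restricting that bound to a thin set and recovering the needed second power.
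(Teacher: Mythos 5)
Your overall strategy---decomposing $T_n^i - \tilde T_n^i$ into an ``indicators agree'' part and an ``indicators disagree'' part, confining both to the thin set $\{|\bar y - \tau_i| < \epsilon_n\}$, and then invoking \eqref{eq:structure-non-diff}---is exactly the paper's decomposition (its terms $A_n^i$ and $B_n^i$), and your treatment of the agreement part, after your mid-proof correction, is right: measure $O(\epsilon_n)$ times the pointwise gap $\norm{y_n - w_n}_{C(\overline\Omega)} = o(t_n)$ gives $o(t_n^2)$, which vanishes after division by $t_n^2$.

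However, your estimate of the disagreement part has a genuine gap. You bound the prefactor $(y_n - \tau_i)$ (resp.\ $(w_n - \tau_i)$) on that set only by $\epsilon_n$, so its $L^1$-contribution is of order $\epsilon_n \cdot \lambda^N(\{|\bar y - \tau_i| < \epsilon_n\}) \leq c_s \epsilon_n^2$. Since $\epsilon_n / t_n \to \norm{S'(\bar u)h}_{C(\overline\Omega)}$, dividing by $t_n^2$ produces a quantity converging to (a constant times) $\norm{S'(\bar u)h}_{C(\overline\Omega)}^2$, which is \emph{not} zero unless $h = 0$: the relation $\epsilon_n = O(t_n)$ gives only boundedness, not vanishing, so your closing claim that ``each term vanishes'' fails for this term. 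The missing idea---which you almost have, since you correctly observe that disagreement of the indicators forces $y_n$ and $w_n$ onto opposite sides of $\tau_i$---is to use that observation to sharpen the prefactor bound rather than the measure bound: if $\tau_i$ lies between $y_n(x)$ and $w_n(x)$, then $|w_n(x) - \tau_i| \leq |y_n(x) - w_n(x)| \leq \rho_n := \norm{y_n - w_n}_{C(\overline\Omega)} = o(t_n)$. With this, the disagreement contribution is at most $\rho_n \cdot c_s(\epsilon_n + \kappa_n) = o(t_n) \cdot O(t_n) = o(t_n^2)$, which is exactly the paper's estimate of $B_n^i$. A secondary point: your phrase ``opposite sides of a threshold $\tau_i$ or $\tau_i \pm \epsilon_0$'' leaves open disagreement at the outer thresholds $\tau_i \pm \epsilon_0$, where the prefactor is of size $\epsilon_0$ and not small at all; this case must be excluded, as the paper does, by noting that for instance $\bar y \in (\tau_i, \tau_{i+1})$ together with $w_n \leq \tau_i - \epsilon_0$ would force $\norm{w_n - \bar y}_{C(\overline\Omega)} \geq \epsilon_0$, impossible for large $n$.
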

\begin{proof}
	It is sufficient to prove \eqref{eq:weak-invariant}. For this purpose, we set $y_n := S(\bar u + t_n h_n)$, $z_n := S(\bar u + t_n v_n)$ and $\epsilon_n := \norm{y_n - \bar y}_{C(\overline\Omega)}$, $\kappa_n := \norm{z_n - \bar y}_{C(\overline\Omega)}$, $\rho_n :=\norm{y_n-z_n}_{C(\overline\Omega)}$. Obviously, we deduce from \cref{prop:control-to-state,prop:G-diff-control2state} that 
	\begin{equation}
		\label{eq:limits-invar}
		\epsilon_n, \kappa_n \to 0^+, \quad \frac{\epsilon_n}{t_n}, \frac{\kappa_n}{t_n} \to \norm{S'(\bar u)h}_{L^\infty(\Omega)}, \quad \text{and} \quad \frac{\rho_n}{t_n} \to 0.
	\end{equation}
	Analogous to \eqref{eq:zeta-func-expression}, there holds 
	\begin{equation}
		\label{eq:zeta-func-expression-vn}
		\zeta(\bar u; t_n, v_n) = -\sum_{i =1}^K \sigma_i \tilde T_n^i
	\end{equation}
	with 	
	$ %\begin{equation*} % \label{eq:Tn-i-tilde}
		\tilde T_n^{i} :=  (z_n-\tau_i  )( \1_{\{\bar y \in (\tau_i, \tau_{i+1}), z_n \in (\tau_i-\epsilon_0, \tau_i) \}}  - \1_{\{ z_n \in (\tau_{i}, \tau_i+\epsilon_0) , \bar y \in (\tau_{i-1},\tau_i) \}} ) 
	$ %\end{equation*}
	for $1 \leq i \leq K, n \geq 1$.
	From \eqref{eq:zeta-func-expression} and \eqref{eq:zeta-func-expression-vn}, we have 
	\begin{equation}
		\label{eq:iden-invariant}
		\bar p \left[\zeta(\bar u; t_n, h_n) -  \zeta(\bar u; t_n, v_n) \right]= -\sum_{i=1}^K \sigma_i \bar p (A_n^i + B_n^i)
	\end{equation} 
	a.a. in $\Omega$ and for all $n \geq 1$, where
	\begin{equation*}
		A_n^i:=  (y_n - z_n)  ( \1_{\{\bar y \in (\tau_i, \tau_{i+1}), y_n \in (\tau_i-\epsilon_0, \tau_i) \}}  - \1_{\{ y_n \in (\tau_{i}, \tau_i+\epsilon_0) , \bar y \in (\tau_{i-1},\tau_i) \}} )
	\end{equation*}
	and
	\begin{multline*}
		B_n^i := (z_n -\tau_i) \left\{ \1_{ \{ \bar y \in (\tau_i, \tau_{i+1}) \} }[ \1_{ \{ y_n \in (\tau_i-\epsilon_0, \tau_i)\} } - \1_{\{  z_n \in (\tau_i-\epsilon_0, \tau_i)\}} ] \right.\\
		\left. 	-  \1_{\{ \bar y \in (\tau_{i-1},\tau_i)\} }[ \1_{\{ y_n \in (\tau_{i}, \tau_i+\epsilon_0)\}} - \1_{\{ z_n \in (\tau_{i}, \tau_i+\epsilon_0)\} } ]\right\}.
	\end{multline*}
	By using \eqref{eq:inclusion-key1},  there holds
	$ %\begin{equation*}
		|A_n^i| \leq |y_n - z_n| \1_{\{ |\bar y - \tau_i| < \epsilon_n \}},
	$ %\end{equation*}	
	which, together with  \eqref{eq:structure-non-diff}, yields
	\begin{equation} \label{eq:A-n}
		\sum_{i =1}^K \norm{A_n^i}_{L^1(\Omega)} \leq c_s \norm{y_n  - z_n}_{C(\overline\Omega)} \epsilon_n = c_s \rho_n \epsilon_n
	\end{equation}
	for all $n$ large enough.
	For the estimate of  $B_n^i$, we rewrite $B_n^i$ as follows
	\begin{multline*}
		B_n^i := (z_n -\tau_i)  \1_{\{ \bar y \in (\tau_i, \tau_{i+1}) \} }  \1_{\{  y_n \in (\tau_i-\epsilon_0, \tau_i), z_n \in [\tau_i, \tau_i + \rho_n] \} } \\
		\begin{aligned}
			&-(z_n -\tau_i)  \1_{\{ \bar y \in (\tau_i, \tau_{i+1}) \} }  \1_{ \{ z_n \in (\tau_i-\epsilon_0, \tau_i), y_n \in [\tau_i, \tau_i + \rho_n] \}}   \\
			&- (z_n -\tau_i) \1_{\{ \bar y \in (\tau_{i-1},\tau_i)\} } \1_{\{ y_n \in (\tau_{i}, \tau_i+\epsilon_0), z_n \in [\tau_i-\rho_n, \tau_i]\}} \\
			&+(z_n -\tau_i) \1_{\{ \bar y \in (\tau_{i-1},\tau_i)\} } \1_{\{ z_n \in (\tau_{i}, \tau_i+\epsilon_0), y_n \in [\tau_i -\rho_n, \tau_i] \}} 
		\end{aligned}
	\end{multline*}
	for $n$ large enough. 
	For a.a. $x \in \{ \bar y \in (\tau_i, \tau_{i+1})  \} \cap \{  y_n \in (\tau_i-\epsilon_0, \tau_i), z_n \in [\tau_i, \tau_i + \rho_n] \}$, we have
	\begin{equation*}
		0 \leq z_n(x) - \tau_i \leq |z_n(x) - y_n(x)| \leq \rho_n \quad \text{and} \quad 0 < \bar y(x) -\tau_i \leq \bar y(x) - y_n(x) \leq  \epsilon_n
	\end{equation*}
	and there thus holds
	\begin{equation} \nonumber
		|z_n -\tau_i|  \1_{\{ \bar y \in (\tau_i, \tau_{i+1}) \} } \1_{\{  y_n \in (\tau_i-\epsilon_0, \tau_i), z_n \in [\tau_i, \tau_i + \rho_n] \} } \leq \rho_n \1_{\{0 < \bar y - \tau_i \leq \epsilon_n\}}
	\end{equation}  
	a.a. in $\Omega$. The analogous estimates hold for other terms of $B_n^i$. We then conclude for $n$ large enough that
	$
		|B_n^i| \leq \rho_n [\1_{\{ |\bar y - \tau_i| \leq \epsilon_n \}} +\1_{\{ |\bar y - \tau_i| \leq \kappa_n \}} ],
	$ 
	which, together with \eqref{eq:structure-non-diff}, yields
	\begin{equation} \label{eq:B-n}
		\sum_{i =1}^K \norm{B_n^i}_{L^1(\Omega)} \leq c_s \rho_n (\epsilon_n + \kappa_n).
	\end{equation}
	Combining \eqref{eq:iden-invariant} with \eqref{eq:A-n} and \eqref{eq:B-n}, we arrive at
	\begin{equation*}
		| \int_\Omega \bar p \left[ \zeta(\bar u; t_n, h_n) -\zeta(\bar u; t_n, v_n)   \right] \dx |  \leq c_s  \norm{\bar p}_{L^\infty(\Omega)} \rho_n (2\epsilon_n + \kappa_n) \sigma_{\max}.
	\end{equation*}
	%with $\sigma_{\max} = \max\{ |\sigma_i|: 1 \leq i \leq K\}$. 
	From this and \eqref{eq:limits-invar}, we have \eqref{eq:weak-invariant}.
\end{proof}

We now can employ \cref{lem:invariant} to show the weak lower semi-continuity in $h$ of $\tilde{Q}$. The argument for this is similar to that in \cite[Prop.~5.2]{Nhu2021Optimization} as well as  \cite[Prop.~5.6]{ClasonNhuRosch2020} and is thus skipped.
\begin{proposition}\label{prop:wlsc-key-term}
	If \eqref{eq:structure-non-diff} is fulfilled, then 
	for any $h_n \rightharpoonup h$ in $L^\barr(\Omega)$,
	\begin{equation*}
		\tilde{Q}(\bar u,\bar p;h) \leq \liminf_{n \to \infty} \tilde{Q}(\bar u,\bar p;h_n).
	\end{equation*}
	In other words, $\tilde{Q}(\bar u,\bar p;\cdot)$ and thus $Q(\bar u, \bar p; \cdot)$ are weakly lower semi-continuous as functionals on $L^\barr(\Omega)$.
\end{proposition}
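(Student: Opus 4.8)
The plan is to prove the weak lower semicontinuity of $\tilde{Q}(\bar u,\bar p;\cdot)$ first, and then deduce that of $Q(\bar u,\bar p;\cdot)$ by treating the remaining, smoother, terms of \cref{def:curvature-func-app} separately. The engine of the whole argument is the ``in particular'' clause of \cref{lem:invariant}: for \emph{every} $\{t_n\} \in c_0^+$ and \emph{every} $\{g_n\} \subset L^2(\Omega)$ with $g_n \rightharpoonup h$, one has $\underline{Q}(\bar u,\bar p;\{t_n\},\{g_n\}) = \underline{Q}(\bar u,\bar p;\{t_n\},\{h\}) \geq \tilde{Q}(\bar u,\bar p;h)$. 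The crucial point is that the direction is allowed to vary along the sequence, which is exactly what permits the diagonalisation below.

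First I would fix $h_n \rightharpoonup h$ and put $\ell := \liminf_{n} \tilde{Q}(\bar u,\bar p;h_n)$; after passing to a (non-relabelled) subsequence I may assume $\tilde{Q}(\bar u,\bar p;h_n) \to \ell$, and $\ell$ is finite by \cref{lem:Q-defined}. For each $n$, the definition of $\tilde{Q}$ as an infimum over $c_0^+$ provides $\{t_k^{(n)}\}_k \in c_0^+$ with $\underline{Q}(\bar u,\bar p;\{t_k^{(n)}\}_k,\{h_n\}) < \tilde{Q}(\bar u,\bar p;h_n) + 1/n$. Since $\underline{Q}$ is a $\liminf$ over $k$, there is a subsequence in $k$ along which the integrals tend to $\underline{Q}$; along it I can pick an index $k(n)$ with $t_{k(n)}^{(n)} < 1/n$ and
\[
\frac{1}{(t_{k(n)}^{(n)})^2}\int_\Omega \bar p\,\zeta(\bar u;t_{k(n)}^{(n)},h_n)\dx < \tilde{Q}(\bar u,\bar p;h_n) + \frac{2}{n}.
\]
Writing $s_n := t_{k(n)}^{(n)}$ and $g_n := h_n$ yields $\{s_n\} \in c_0^+$ and $g_n \rightharpoonup h$.

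Taking $\liminf_n$ in the displayed estimate and using $\tilde{Q}(\bar u,\bar p;h_n) \to \ell$ gives $\underline{Q}(\bar u,\bar p;\{s_n\},\{g_n\}) \leq \ell$, while \cref{lem:invariant} applied to $g_n \rightharpoonup h$ gives $\underline{Q}(\bar u,\bar p;\{s_n\},\{g_n\}) \geq \tilde{Q}(\bar u,\bar p;h)$; hence $\tilde{Q}(\bar u,\bar p;h) \leq \ell$, the desired lower semicontinuity of $\tilde{Q}$. To obtain that of $Q$, I would use assertion \ref{it:wlsc-app} of \cref{prop:control-to-state}: $h_n \rightharpoonup h$ forces $S'(\bar u;h_n) \to S'(\bar u;h)$ strongly in $H^1_0(\Omega)\cap C(\overline\Omega)$, so $(S'(\bar u;h_n))^2 \to (S'(\bar u;h))^2$ and the integrals carrying $\frac{\partial^2 L}{\partial y^2}(x,\bar y)$ and $\1_{\{\bar y \notin E_f\}}\bar p\, f''_{yy}(\bar y)$ converge; the Tikhonov term $\nu\norm{h_n}_{L^2(\Omega)}^2$ is weakly lower semicontinuous, and so is $2\tilde{Q}(\bar u,\bar p;\cdot)$ by the above. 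Adding these contributions shows $Q(\bar u,\bar p;\cdot)$ is weakly lower semicontinuous.

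The main obstacle is the nested infimum–$\liminf$ structure of $\tilde{Q}$: one must distil a single diagonal sequence $(s_n,g_n)$ that simultaneously satisfies $s_n \to 0^+$, keeps $g_n \rightharpoonup h$, and controls the integral from above by $\tilde{Q}(\bar u,\bar p;h_n) + o(1)$. What makes this manageable is precisely the flexibility in \cref{lem:invariant} to let the direction vary: taking $g_n = h_n$ (rather than the constant direction $h$) both preserves the weak convergence needed to invoke the lemma and lets the single index $n$ run along the diagonal.
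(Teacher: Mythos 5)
Your proof is correct and follows exactly the route the paper intends: the paper itself omits the argument (referring to \cite[Prop.~5.2]{Nhu2021Optimization} and \cite[Prop.~5.6]{ClasonNhuRosch2020}), but its stated purpose for \cref{lem:invariant} is precisely to feed the diagonal sequence $(s_n,g_n)=(t^{(n)}_{k(n)},h_n)$ you construct, yielding $\tilde{Q}(\bar u,\bar p;h)\leq\underline{Q}(\bar u,\bar p;\{s_n\},\{g_n\})\leq\liminf_n\tilde{Q}(\bar u,\bar p;h_n)$. Your treatment of the remaining terms of $Q$ via \cref{prop:control-to-state}\ref{it:wlsc-app} and weak lower semicontinuity of the $L^2$ norm is likewise the standard and correct completion.
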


\subsection{Second-order optimality conditions based on $Q$} \label{sec:2ndOS-based-Q}

In order to derive the second-order optimality system, we need the following second-order Taylor-type expansion of $F$ in $\bar u$. 

\begin{lemma}
	\label{lem:2nd-Taylor}
	Assume that $\lambda^N(\{ \bar y \in E_f \}) =0$. Then, for any $u \in L^\barr(\Omega)$, the following second-order Taylor-type expansion is valid
	\begin{multline*}
		F(u) - F(\bar u) = \int_\Omega\int_0^1 (1-s) \frac{\partial^2 L}{\partial y^2}(x,\bar y + s(y_u -\bar y))(y_u - \bar y)^2 \ds \dx \\
		\begin{aligned}[t]
			& + \frac{\nu}{2} \norm{u - \bar u}_{L^2(\Omega)}^2+ \int_\Omega (\bar p + \nu \bar u) (u - \bar u)\dx  + r(y_u),
		\end{aligned}
	\end{multline*}
	with   $y_u:= S(u)$,
	where
	\begin{equation}
		\label{eq:remainder}
		r(y_u) :=  \int_\Omega \bar p \left[ \bar \chi (y_u -\bar y) - f(y_u) + f(\bar y) \right] \dx
	\end{equation}
	with $\bar \chi(x) := \1_{\{ \bar y \notin E_f \}}(x)f'(\bar y(x))$ for a.a. $x \in \Omega$. 
\end{lemma}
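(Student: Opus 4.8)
The plan is to compute $F(u) - F(\bar u)$ directly from its definition, handling the tracking term and the Tikhonov term separately, and then to convert the leftover first-order term into the adjoint-based form by using the equations satisfied by $\bar p$, $y_u$, and $\bar y$. This lemma is an exact identity rather than a differentiability statement, so no limit arguments are needed.

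First I would split
\[
	F(u) - F(\bar u) = \int_\Omega [L(x, y_u) - L(x, \bar y)]\dx + \frac{\nu}{2}\left(\norm{u}_{L^2(\Omega)}^2 - \norm{\bar u}_{L^2(\Omega)}^2\right).
\]
The Tikhonov part is purely algebraic: writing $u^2 - \bar u^2 = (u - \bar u)^2 + 2\bar u(u - \bar u)$ gives $\frac{\nu}{2}\norm{u - \bar u}_{L^2(\Omega)}^2 + \nu \int_\Omega \bar u (u - \bar u)\dx$, which produces the stated quadratic term and the $\nu\bar u$ contribution to the linear term. For the tracking part, I would apply Taylor's theorem with integral remainder to $s \mapsto L(x, \bar y + s(y_u - \bar y))$ on $[0,1]$, for a.a.\ fixed $x$, obtaining
\[
	L(x, y_u) - L(x, \bar y) = \frac{\partial L}{\partial y}(x, \bar y)(y_u - \bar y) + \int_0^1 (1 - s) \frac{\partial^2 L}{\partial y^2}(x, \bar y + s(y_u - \bar y))(y_u - \bar y)^2 \ds.
\]
Integrating over $\Omega$ yields the stated second-order integral together with a leftover first-order term $\int_\Omega \frac{\partial L}{\partial y}(x, \bar y)(y_u - \bar y)\dx$; integrability of both is guaranteed by the bounds $\phi_M,\psi_M$ of \cref{ass:integrand} and the fact that $y_u, \bar y \in C(\overline\Omega)$ are uniformly bounded.

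The crux is to rewrite this leftover first-order term. Since $\lambda^N(\{\bar y \in E_f\}) = 0$, \cref{prop:G-diff-control2state,thm:1st-OS} guarantee that $\bar\chi = \1_{\{\bar y \notin E_f\}} f'(\bar y)$ is exactly the coefficient appearing in the adjoint equation \eqref{eq:adjoint-OS}, so that $\bar p = G^*_{\bar\chi}(\frac{\partial L}{\partial y}(\cdot, \bar y))$ solves $A^* \bar p + \bar\chi \bar p = \frac{\partial L}{\partial y}(\cdot, \bar y)$. Testing this adjoint equation with $w := y_u - \bar y \in H^1_0(\Omega)$ and using the definition of the formal adjoint $A^*$ gives
\[
	\int_\Omega \frac{\partial L}{\partial y}(x, \bar y)(y_u - \bar y)\dx = \int_\Omega \bar p\left[A(y_u - \bar y) + \bar\chi(y_u - \bar y)\right]\dx.
\]
Subtracting the state equations for $y_u$ and $\bar y$ yields $A(y_u - \bar y) = (u - \bar u) - (f(y_u) - f(\bar y))$; substituting this and recalling the definition \eqref{eq:remainder} of $r(y_u)$ turns the right-hand side into $\int_\Omega \bar p(u - \bar u)\dx + r(y_u)$. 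Combining with the $\nu\bar u$ contribution from the Tikhonov step assembles the full linear term $\int_\Omega(\bar p + \nu\bar u)(u - \bar u)\dx$ and completes the identity.

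The main obstacle is the bilinear-form manipulation in the crux step: one must justify testing the weak formulation of the adjoint equation with $w = y_u - \bar y$ and correctly interpret the duality pairing $\langle A^* \bar p, w\rangle = \langle \bar p, A w\rangle$. This requires both $\bar p$ and $w$ to lie in $H^1_0(\Omega)$, which holds here, and that $f(y_u) - f(\bar y) \in L^2(\Omega)$ so that $A(y_u - \bar y)$ is a well-defined element of $H^{-1}(\Omega)$ pairable against $\bar p$; the latter follows from the local Lipschitz continuity of $f$ on the range of the bounded states. The remaining computations are routine.
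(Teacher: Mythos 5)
Your proposal is correct and follows essentially the same route as the paper's proof: Taylor expansion of $L$ with integral remainder, algebraic splitting of the Tikhonov term, and conversion of the leftover first-order term $\int_\Omega \frac{\partial L}{\partial y}(x,\bar y)(y_u-\bar y)\dx$ via the adjoint equation and the difference of the state equations. The paper phrases the crux step by writing out the two tested weak formulations and subtracting them, which is exactly your duality identity $\langle A^*\bar p, w\rangle = \langle \bar p, Aw\rangle$ with $w = y_u - \bar y$ in different notation.
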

\begin{proof}
	Employing a Taylor expansion gives 
	\begin{multline} \label{eq:obj-diff}
		F(u) - F(\bar u)  =  \int_\Omega \left[L(x,y_u) - L(x, \bar y) \right]\dx + \frac{\nu}{2} \int_\Omega (u^2 - \bar u^2)\dx \\
		\begin{aligned}[b]				
			& = \int_\Omega \frac{\partial L}{\partial y}(x, \bar y)(y_u - \bar y) \dx +\nu \int_\Omega  \left( u - \bar u \right)  \bar u\dx  + \frac{\nu}{2} \norm{u - \bar u}_{L^2(\Omega)}^2 \\
			\MoveEqLeft[-1] + \int_\Omega \int_0^1 (1-s) \frac{\partial^2 L}{\partial y^2}(x,\bar y + s(y_u -\bar y))(y_u - \bar y)^2 \ds\dx  \\
			& = \int_\Omega \frac{\partial L}{\partial y}(x, \bar y)(y_u - \bar y) \dx - \int_\Omega \bar p \left( u - \bar u \right)  \dx + \int_\Omega (\bar p + \nu \bar u)(u - \bar u)\dx  \\
			\MoveEqLeft[-1] + \int_\Omega \int_0^1 (1-s)\frac{\partial^2 L}{\partial y^2}(x,\bar y + s(y_u -\bar y))(y_u - \bar y)^2 \ds\dx + \frac{\nu}{2} \norm{u - \bar u}_{L^2(\Omega)}^2.
		\end{aligned}
	\end{multline} 	
	We now test the state equations for $y_u$ and $\bar y$ by $\bar p$ and then subtract the derived results to have
	\begin{equation*}
		\int_\Omega \bar p (u - \bar u) \dx = \int_\Omega \sum_{i,j=1}^N a_{ij}\partial_{i} (y_u -  \bar y) \partial_{j} \bar  p + a_0 (y_u-\bar y) \bar p + \bar p \left[f(y_u) - f(\bar y)\right] \dx.
	\end{equation*}
	Testing the equation \eqref{eq:adjoint-state-SA} for $\bar p$ by $(y_u -\bar y)$ gives
	\begin{multline*}
		\int_\Omega \frac{\partial L}{\partial y}(x, \bar y)(y_u - \bar y) \dx = \int_\Omega \sum_{i,j=1}^N a_{ij}\partial_{i} (y_u -  \bar y) \partial_{j} \bar p \\
		 + a_0 (y_u-\bar y) \bar p +\1_{\{\bar y \notin E_f\}} f'(\bar y) \bar p (y_u - \bar y) \dx.
	\end{multline*}
	Subtracting the two above equations yields
	\begin{equation} \nonumber
		\int_\Omega \frac{\partial L}{\partial y}(x, \bar y)(y_u - \bar y) \dx - \int_\Omega \bar p (u - \bar u) \dx = \int_\Omega \bar p \left[ \bar \chi (y_u -\bar y) - f(y_u) + f(\bar y) \right] \dx.
	\end{equation} 
	This together with \eqref{eq:obj-diff} gives the desired Taylor-type expansion of $F$ at $\bar u$. 
\end{proof}

The crucial analysis is now the limit 
%\textcolor{blue}{%text}
	relevant for 
%}%
the remainder term defined in \eqref{eq:remainder}.
\begin{lemma}
	\label{lem:key-limit}
	Assume that  $\lambda^N(\{\bar y \in E_f \}) =0$. Then, for $h_n \rightharpoonup h$ in $L^\barr(\Omega)$ and $\{t_n\} \in c_0^+$, there holds
	\begin{equation*}
		\label{eq:key-limit}
		 \frac{1}{t_n^2}  r(S(\bar u + t_n h_n)) = - \frac{1}{2}\int_\Omega \1_{\{\bar y \notin E_f \}}\bar p f''_{yy}(\bar y)\left(S'(\bar u) h\right)^2\dx  + \frac{1}{t_n^2}\int_\Omega  \bar p \zeta(\bar u; t_n, h_n) \dx+ o(1)
	\end{equation*}
	for $n$ large enough. Consequently, one has
	\begin{equation} \nonumber
		\liminf\limits_{n \to \infty} \frac{1}{t_n^2}  r(S(\bar u + t_n h_n)) = -\frac{1}{2}\int_\Omega \1_{\{\bar y \notin E_f \}}\bar p f''_{yy}(\bar y)\left(S'(\bar u) h\right)^2 \dx + \underline{Q}(\bar u,\bar p; \{t_n\}, \{h_n\}).
	\end{equation} 
\end{lemma}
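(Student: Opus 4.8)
The plan is to start from the definition \eqref{eq:remainder}, set $y_n := S(\bar u + t_n h_n)$ and $\epsilon_n := \norm{y_n - \bar y}_{C(\overline\Omega)}$, and rewrite
\[
	r(y_n) = -\int_\Omega \bar p\,\big[f(y_n) - f(\bar y) - \bar\chi(y_n - \bar y)\big]\dx,
	\qquad \bar\chi = \1_{\{\bar y\notin E_f\}}f'(\bar y).
\]
Since $\lambda^N(\{\bar y\in E_f\})=0$, \cref{prop:G-diff-control2state} yields the G\^ateaux differentiability of $S$ at $\bar u$, so by \eqref{eq:weak-strong-Hadamard-app} the quotient $\tfrac{y_n-\bar y}{t_n}$ converges to $S'(\bar u)h$ in $C(\overline\Omega)$; in particular $\epsilon_n\to 0^+$ and $\epsilon_n/t_n\to\norm{S'(\bar u)h}_{L^\infty(\Omega)}$. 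For $n$ large enough that $\epsilon_n<\epsilon_0$, I would split $\Omega$ (up to the null set $\{\bar y\in E_f\}$) into a \emph{same-piece} part, where $\bar y$ and $y_n$ lie in a common interval $(\tau_i,\tau_{i+1})$, and finitely many \emph{crossing} parts, where $y_n$ has passed an adjacent singular value $\tau_i$. Because each piece has length exceeding $2\epsilon_0$ by \eqref{eq:epsilon-zero}, only adjacent crossings can occur for large $n$, and these are exactly the index sets appearing in $\zeta$.

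On the same-piece part the whole segment $[\bar y,y_n]$ stays in one $C^2$ piece, so the integral form of Taylor's theorem gives
\[
	f(y_n) - f(\bar y) - f'(\bar y)(y_n - \bar y) = \Big(\int_0^1(1-s)f''_{yy}(\bar y + s(y_n - \bar y))\,ds\Big)(y_n - \bar y)^2.
\]
Dividing by $t_n^2$ and using the uniform convergence of $\tfrac{y_n-\bar y}{t_n}$, the continuity and (on the bounded range of the states) uniform boundedness of $f''_{yy}$ off $E_f$, together with the fact that the indicator of the same-piece part converges a.e.\ to $\1_{\{\bar y\notin E_f\}}$ (the crossing parts shrink to measure zero), dominated convergence shows that this contribution tends to $-\tfrac12\int_\Omega \1_{\{\bar y\notin E_f\}}\bar p\,f''_{yy}(\bar y)(S'(\bar u)h)^2\dx$.

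The heart is the crossing parts. On $\{\bar y\in(\tau_i,\tau_{i+1}),\,y_n\in(\tau_i-\epsilon_0,\tau_i)\}$ one has $f(\bar y)=f_i(\bar y)$, $f(y_n)=f_{i-1}(y_n)$, $\bar\chi=f'_i(\bar y)$; inserting $\pm f_i(\tau_i)$ and $\pm f_{i-1}(\tau_i)$, using the continuity condition \eqref{eq:PC1-continuous-cond}, Taylor-expanding $f_{i-1}$ about $\tau_i$ and $f_i$ about $\bar y$, and invoking \eqref{eq:sigma-i}, I obtain
\[
	f(y_n) - f(\bar y) - \bar\chi(y_n - \bar y) = \sigma_i\,(y_n - \tau_i) + R_n^i,
\]
where $R_n^i$ collects only products of $(y_n-\tau_i)$ and $(\tau_i-\bar y)$, each bounded by $\epsilon_n$ since $\tau_i$ lies between $y_n$ and $\bar y$ there, whence $|R_n^i|\le C\epsilon_n^2$. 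The symmetric computation on $\{\bar y\in(\tau_{i-1},\tau_i),\,y_n\in(\tau_i,\tau_i+\epsilon_0)\}$ produces leading term $-\sigma_i(y_n-\tau_i)$. Comparing with the definition of $\zeta$, the leading linear terms over all crossing parts assemble precisely into $-\bar p\,\zeta(\bar u;t_n,h_n)$, while, writing $R_n$ for the total remainder and using \eqref{eq:inclusion-key1},
\[
	\frac{1}{t_n^2}\int_\Omega |\bar p|\,|R_n|\dx \le C\norm{\bar p}_{L^\infty(\Omega)}\,\frac{\epsilon_n^2}{t_n^2}\sum_{i=1}^K\lambda^N\big(\{|\bar y-\tau_i|<\epsilon_n\}\big).
\]
Since $\epsilon_n/t_n$ is bounded and $\lambda^N(\{|\bar y-\tau_i|<\epsilon_n\})\to\lambda^N(\{\bar y=\tau_i\})=0$ by continuity of measure from above, this term is $o(1)$.

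Collecting the three contributions yields the stated expansion. The final $\liminf$ assertion then follows because the second-subderivative term is a fixed constant and, by \eqref{eq:key-term-sn}, $\tfrac{1}{t_n^2}\int_\Omega \bar p\,\zeta(\bar u;t_n,h_n)\dx$ has lower limit $\underline Q(\bar u,\bar p;\{t_n\},\{h_n\})$. I expect the main obstacle to be the bookkeeping of the third paragraph: recognizing that the non-smooth Taylor defect across each $\tau_i$ reduces, to leading order, to the single jump term $\sigma_i(y_n-\tau_i)$ matching $\zeta$, and controlling the quadratic defect under only the hypothesis $\lambda^N(\{\bar y\in E_f\})=0$ rather than the stronger \eqref{eq:structure-non-diff}, which is why a measure-continuity argument, not a linear measure bound, is the correct tool here.
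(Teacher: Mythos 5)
Your proposal is correct and follows essentially the same route as the paper's proof: the same splitting of $\Omega$ (modulo the null set $\{\bar y \in E_f\}$) into same-piece and crossing sets, the same Taylor expansions around $\bar y$ and $\tau_i$ combined with the continuity condition \eqref{eq:PC1-continuous-cond} to isolate the jump terms $\sigma_i(y_n-\tau_i)$ that assemble into $\zeta$, and the same reliance on $\lambda^N(\{\bar y = \tau_i\})=0$ (not the stronger \eqref{eq:structure-non-diff}) to kill the quadratic remainders. The only cosmetic difference is that you dispose of the remainder via continuity of measure applied to $\lambda^N(\{|\bar y-\tau_i|<\epsilon_n\})$, whereas the paper applies dominated convergence to the pointwise-vanishing indicators of the crossing sets — two equivalent formulations of the same fact.
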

\begin{proof}
	Setting $y_n := S(\bar u + t_nh_n)$, we deduce from  \eqref{eq:weak-strong-Hadamard-app} and \cref{prop:G-diff-control2state} that
	\begin{equation} \label{eq:G-diff-key}
		\frac{y_n - \bar y}{t_n} \to S'(\bar u)h \quad
		{%text}
		 \text{strongly in} \quad H^1_0(\Omega) \cap C(\overline\Omega).
		}%
	\end{equation}
	Exploiting the fact $\lambda^N(\{\bar y \in E_f \}) =0$, we can write
	\begin{equation*}
		r(y_n) = \sum_{i=0}^\barK \int_\Omega \1_{\{ \bar y \in (\tau_i, \tau_{i+1}) \}} \bar p \left[ f'_i(\bar y) (y_n -\bar y) - f(y_n) + f_i(\bar y) \right] \dx =: \sum_{i=0}^\barK r_{i,n}.
	\end{equation*}
	Since $h_n \rightharpoonup h$ in $L^\barr(\Omega)$, $y_n \to \bar y$ in $C(\overline\Omega)$ and thus $\norm{y_n - \bar y}_{C(\overline\Omega)} <\epsilon_0$ for $n$ large enough. With $n$ sufficiently large, we have for any $0\leq i \leq K$ the decomposition
	\begin{multline*}
		\{ \bar y \in (\tau_i, \tau_{i+1}) \} = \{ \bar y \in (\tau_i, \tau_{i+1}), y_n \in (\tau_i-\epsilon_0, \tau_i] \} \cup \{ \bar y \in (\tau_i, \tau_{i+1}), y_n \in (\tau_i, \tau_{i+1}) \}\\
		\begin{aligned}
			\cup \{ \bar y \in (\tau_i, \tau_{i+1}), y_n \in [\tau_{i+1}, \tau_{i+1}+ \epsilon_0) \} =: \Omega_{i,n}^1 \cup \Omega_{i,n}^2 \cup \Omega_{i,n}^3.
		\end{aligned}
	\end{multline*}
	Thus, $r_{i,n}$, $0 \leq i \leq \barK$, can be expressed as
	$
		r_{i,n} = r_{i,n}^1 + r_{i,n}^2 + r_{i,n}^3
	$
	with
	\begin{equation*}
		\begin{aligned}
			& r_{i,n}^1 :=\int_\Omega  \1_{ \Omega_{i,n}^1} \bar p \left[ f'_i(\bar y) (y_n -\bar y) - f_{i-1}(y_n) + f_i(\bar y) \right] \dx,\\
			& r_{i,n}^2 :=\int_\Omega \1_{\Omega_{i,n}^2}\bar p \left[ f'_i(\bar y) (y_n -\bar y) - f_{i}(y_n) + f_i(\bar y) \right] \dx,\\
			& r_{i,n}^3 :=\int_\Omega  \1_{\Omega_{i,n}^3}\bar p \left[ f'_i(\bar y) (y_n -\bar y) - f_{i+1}(y_n) + f_i(\bar y) \right] \dx.
		\end{aligned}
	\end{equation*}
	We first estimate $r_{i,n}^2$. For this end, we rewrite it as
	\begin{multline} \label{eq:r-2}
		r_{i,n}^2 = \int_\Omega  \1_{\{\bar y \in (\tau_i, \tau_{i+1}) \}} \bar p \left[ f'_i(\bar y) (y_n -\bar y) - f_{i}(y_n) + f_i(\bar y) \right] \dx \\
		\begin{aligned}[b]
		 	&- \int_\Omega  \1_{\{\bar y \in (\tau_i, \tau_{i+1}), y_n \in (\tau_i-\epsilon_0, \tau_i]  \}} \bar p \left[ f'_i(\bar y) (y_n -\bar y) - f_{i}(y_n) + f_i(\bar y) \right] \dx \\
		 	&- \int_\Omega  \1_{\{\bar y \in (\tau_i, \tau_{i+1}), y_n \in [\tau_{i+1}, \tau_{i+1}+\epsilon_0)  \}} \bar p \left[ f'_i(\bar y) (y_n -\bar y) - f_{i}(y_n) + f_i(\bar y) \right] \dx.
		 \end{aligned}
	\end{multline}
	By using a second-order Taylor expansion of $f_i$ and the limit \eqref{eq:weak-strong-Hadamard-app}, the Lebesgue dominated convergence theorem implies that
	\begin{multline*} \nonumber
		\lim_{n\to\infty} \frac{1}{t_n^2}\int_\Omega  \1_{\{\bar y \in (\tau_i, \tau_{i+1}) \}} \bar p \left[ f'_i(\bar y) (y_n -\bar y) - f_{i}(y_n) + f_i(\bar y) \right] \dx \\
		= - \frac{1}{2}\int_\Omega \1_{\{\bar y \in (\tau_i, \tau_{i+1}) \}}\bar p f''_{yy}(\bar y)\left(S'(\bar u) h\right)^2 \dx.
	\end{multline*} 
	Moreover,  there holds
	\begin{align*}
		\norm{ \1_{\{\bar y \in (\tau_i, \tau_{i+1}), y_n \in (\tau_i-\epsilon_0, \tau_i]  \}} }_{L^1(\Omega)} & \leq \int_\Omega \1_{\{ |\bar y - \tau_i| \leq \epsilon_n \} }  \dx = o(1)
	\end{align*}
	for sufficiently large $n$ with $\epsilon_n := \norm{y_n - \bar y}_{C(\overline\Omega)}$.
	Then 
	%\textcolor{blue}{%}
	the absolute value of the second integral 
%}%
	term in the right-hand side of \eqref{eq:r-2} is not greater than
	\begin{align*}
		o(1) \norm{\bar p}_{L^\infty(\Omega)} \norm{f'_i(\bar y) (y_n -\bar y) - f_{i}(y_n) + f_i(\bar y)}_{L^\infty(\Omega)} \leq C\epsilon_n^2 o(1),
	\end{align*}
	which, along with \eqref{eq:G-diff-key}, yields
	\begin{equation} \nonumber
		\lim_{n\to\infty} \frac{1}{t_n^2}\int_\Omega  \1_{\{\bar y \in (\tau_i, \tau_{i+1}), y_n \in (\tau_i-\epsilon_0, \tau_i]  \}} \bar p \left[ f'_i(\bar y) (y_n -\bar y) - f_{i}(y_n) + f_i(\bar y) \right] \dx =0.
	\end{equation} 
	We also have the same limit for the last integral term in the right-hand side of \eqref{eq:r-2}. We then have
	\begin{equation}
		\label{eq:r2-limit}
		\lim_{n\to\infty} \frac{1}{t_n^2}r_{i,n}^2 = - \frac{1}{2}\int_\Omega \1_{\{\bar y \in (\tau_i, \tau_{i+1}) \}}\bar p f''_{yy}(\bar y)\left(S'(\bar u) h\right)^2 \dx.
	\end{equation}	
	We now estimate $r_{i,n}^1$. To this aim, using the identity \eqref{eq:PC1-continuous-cond} gives 
	\begin{multline} \label{eq:r-1}   
		f'_i(\bar y) (y_n -\bar y) - f_{i-1}(y_n) + f_i(\bar y) = -\left[ f_{i-1}(y_n) - f_{i-1}(\tau_i) - f_{i-1}'(\tau_i)(y_n -\tau_i) \right] \\
		\begin{aligned}[b]
			&  -\left[ f_{i}(\tau_i) - f_{i}(\bar y) - f_i'(\bar y)(\tau_i - \bar y) \right] - \left[ f_{i}'(\tau_i) - f_i'(\bar y) \right](y_n- \tau_i)\\
			&  - \left[ f_{i-1}'(\tau_i) - f_i'(\tau_i) \right](y_n- \tau_i).
		\end{aligned}
	\end{multline}
	For a.a. 
	%\textcolor{blue}{%text}
	$x \in \Omega_{i,n}^1 =  \{ \bar y \in (\tau_i, \tau_{i+1}), y_n \in (\tau_i-\epsilon_0, \tau_i] \}$, 
%}%
	we have 
	\begin{equation*} \label{eq:Omega-i2-set}
		0 \leq \bar y(x) - \tau_i, \tau_i - y_n(x) \leq |y_n(x) - \bar y(x)| \leq \epsilon_n < \epsilon_0
	\end{equation*}
	for  $n$ large enough.
	We thus derive
	\begin{align*}
			\frac{1}{t_n^2}| \1_{ \Omega_{i,n}^{1} }(x)\left[ f_{i-1}(y_n(x)) - f_{i-1}(\tau_i) - f_{i-1}'(\tau_i)(y_n(x) -\tau_i)  \right] | & \leq  \frac{1}{t_n^2}C \1_{ \Omega_{i,n}^{1} }(x) \epsilon_n^2 \\
			 &\leq C \1_{ \Omega_{i,n}^{1} }(x) \to 0
	\end{align*} 
	for a.a. $x \in \Omega$ and for some constant $C$. Analogous limits corresponding to the second and the third terms in the right-hand side of \eqref{eq:r-1} are also validated. We can thus deduce from facts $\1_{ \Omega_{0,n}^{1} } = 0 =  \1_{ \Omega_{K,n}^{3} }$ a.a. in $\Omega$ that
	\begin{equation}
		\label{eq:r1-limit}
		\lim_{n \to \infty} \frac{1}{t_n^2} \{ r_{i,n}^1 + \left[ f_{i-1}'(\tau_i) - f_i'(\tau_i) \right]\int_\Omega \1_{ \Omega_{i,n}^{1} } \bar p (y_n- \tau_i) \dx \} =0, \quad 1 \leq i \leq K
	\end{equation}
	and, analogously,
	\begin{equation}
		\label{eq:r3-limit}
		\lim_{n \to \infty} \frac{1}{t_n^2} \{ r_{i,n}^3 + \left[ f_{i+1}'(\tau_{i+1}) - f_i'(\tau_{i+1}) \right]\int_\Omega \1_{ \Omega_{i,n}^{3} } \bar p (y_n- \tau_{i+1}) \dx \} =0,  0 \leq i \leq K-1.
	\end{equation}
	From \eqref{eq:r2-limit}, \eqref{eq:r1-limit}, and \eqref{eq:r3-limit}, we have from the definition of $\zeta$ and a detailed computation that
	\begin{equation} \nonumber
		\frac{1}{t_n^2}r(y_n) - \frac{1}{t_n^2} \int_\Omega \bar p \zeta(\bar u; t_n, h_n) \dx= - \frac{1}{2}\int_\Omega \1_{\{\bar y \notin E_f \}}\bar p f''_{yy}(\bar y)\left(S'(\bar u) h\right)^2 \dx + o(1),
	\end{equation} 
	which is the desired conclusion.
\end{proof}

%%% Compare Q in weak second subderivative 
We now provide some notion of the second-order generalized derivative for $F$, which was introduced in \cite[Def.~3.2]{WachsmuthWachsmuth2022} to deal with the second derivative for the convex integral functional. 
\begin{definition}[strong second subderivative] \label{def:weaksecond-subderivative}
	Let $X$ be the dual of a separable Banach space $Y$, i.e., $X =Y^*$ and let $G: Y \to (-\infty, +\infty]$ be an extended functional. Assume that $u \in \textrm{dom}(G)$ and $w \in X$. Then the \emph{strong second subderivative} $G''(u,w;\cdot): Y \to [-\infty, + \infty]$ of $G$ at $u$ for $w$ is defined via
	\begin{equation} \nonumber
		G''(u,w;h) := \inf \left\{ \liminf\limits_{n \to \infty} \frac{G(u+ t_n h_n) - G(u) - t_n \langle w, h_n \rangle}{t_n^2/2} \mid t_n \to 0^+, h_n \to h \right\}.
	\end{equation} 
\end{definition}

As a consequence of \cref{lem:2nd-Taylor} and the structural assumption \eqref{eq:structure-non-diff}, the non-smooth curvature functional $Q$ of $F$ at $\bar u$ is identical to the strong second subderivative of $F$ at $\bar u$, as shown in the following.
\begin{proposition}
	\label{prop:Q-weak2subder}
	Let $X = Y^*$ with $Y := L^\barr(\Omega)$ and let  \eqref{eq:structure-non-diff} be fulfilled. Then, there holds 
	\begin{equation} \nonumber
		Q(\bar u, \bar p; h) = F''(\bar u, w; h)
	\end{equation} 
	for all $h \in L^\barr(\Omega)$ with $w := (\bar p + \nu \bar u) =  F'(\bar u) \in X$.
\end{proposition}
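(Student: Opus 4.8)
The plan is to substitute the second-order Taylor-type expansion of \cref{lem:2nd-Taylor} directly into the difference quotient defining $F''(\bar u, w; \cdot)$ in \cref{def:weaksecond-subderivative}, and then route the three resulting pieces through the limits already prepared in \cref{sec:2nd-OC:curvature}. Observe first that \eqref{eq:structure-non-diff} forces $\lambda^N(\{\bar y \in E_f\}) = 0$, so both \cref{lem:2nd-Taylor} and \cref{lem:key-limit} apply and $\bar p$ from \eqref{eq:adjoint-state-SA} is the relevant adjoint state. Fix $\{t_n\} \in c_0^+$ and a sequence $h_n \to h$ in $L^2(\Omega)$, and set $y_n := S(\bar u + t_n h_n)$. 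Since $w = \bar p + \nu \bar u$, the linear term $t_n \langle w, h_n \rangle = t_n \int_\Omega (\bar p + \nu \bar u) h_n \dx$ exactly cancels the first-order contribution in \cref{lem:2nd-Taylor}, leaving
\[
	\frac{F(\bar u + t_n h_n) - F(\bar u) - t_n\langle w, h_n\rangle}{t_n^2/2} = A_n + \nu\norm{h_n}_{L^2(\Omega)}^2 + \frac{2}{t_n^2}r(y_n),
\]
where $A_n := \frac{2}{t_n^2}\int_\Omega\int_0^1(1-s)\frac{\partial^2 L}{\partial y^2}(x,\bar y + s(y_n-\bar y))(y_n-\bar y)^2\ds\dx$.

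Next I would evaluate each piece. Because $h_n \to h$ implies $h_n \rightharpoonup h$, the limit \eqref{eq:weak-strong-Hadamard-app} yields $(y_n - \bar y)/t_n \to S'(\bar u)h$ in $C(\overline\Omega)$ while $\bar y + s(y_n - \bar y) \to \bar y$ uniformly; the growth and Lipschitz bounds in \cref{ass:integrand}, the dominated convergence theorem, and $\int_0^1(1-s)\ds = \tfrac12$ then give $A_n \to \int_\Omega \frac{\partial^2 L}{\partial y^2}(x, \bar y)(S'(\bar u)h)^2 \dx$. The strong convergence $h_n \to h$ gives $\nu\norm{h_n}_{L^2(\Omega)}^2 \to \nu\norm{h}_{L^2(\Omega)}^2$. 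For the remainder I invoke \cref{lem:key-limit} to write $\frac{2}{t_n^2}r(y_n) = -\int_\Omega \1_{\{\bar y \notin E_f\}}\bar p f''_{yy}(\bar y)(S'(\bar u)h)^2 \dx + \frac{2}{t_n^2}\int_\Omega \bar p \zeta(\bar u; t_n, h_n)\dx + o(1)$. Passing to $\liminf_{n\to\infty}$ and collecting the three deterministic limits, the only sequence-dependent residue is $2\,\underline{Q}(\bar u, \bar p; \{t_n\}, \{h_n\})$.

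Finally I would eliminate the dependence on the particular recovery sequence. Since $h_n \rightharpoonup h$, \cref{lem:invariant} gives $\underline{Q}(\bar u, \bar p; \{t_n\}, \{h_n\}) = \underline{Q}(\bar u, \bar p; \{t_n\}, \{h\})$, so the value of the $\liminf$ depends on the chosen data only through $\{t_n\}$. As the constant sequence $h_n \equiv h$ is an admissible choice in \cref{def:weaksecond-subderivative}, taking the infimum over all $t_n \to 0^+$ and $h_n \to h$ collapses to $\inf_{\{t_n\}\in c_0^+}\underline{Q}(\bar u, \bar p; \{t_n\}, \{h\}) = \tilde{Q}(\bar u, \bar p; h)$. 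Assembling the pieces gives $F''(\bar u, w; h) = \int_\Omega[\frac{\partial^2 L}{\partial y^2}(x, \bar y)(S'(\bar u)h)^2 + \nu h^2]\dx - \int_\Omega \1_{\{\bar y \notin E_f\}}\bar p f''_{yy}(\bar y)(S'(\bar u)h)^2\dx + 2\tilde{Q}(\bar u, \bar p; h) = Q(\bar u, \bar p; h)$.

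The main obstacle is precisely this last reduction: one must reconcile the fact that \cref{def:weaksecond-subderivative} ranges over strongly convergent sequences $h_n \to h$ while $\tilde{Q}$ is built from the constant sequence. The key is that \cref{lem:invariant}, although stated for weakly convergent sequences, forces every admissible recovery sequence to produce the \emph{same} value of $\underline{Q}$, so that the infimum genuinely factorizes over $\{t_n\}$ alone and no smaller value can be extracted by a cleverly oscillating non-constant sequence; verifying that this invariance survives the combination with the deterministic limits above is the delicate point.
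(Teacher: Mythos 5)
Your proposal is correct and follows essentially the same route as the paper's own proof: substitute the Taylor-type expansion of \cref{lem:2nd-Taylor} into the difference quotient, handle the remainder via \cref{lem:key-limit}, and use the invariance property of \cref{lem:invariant} to reduce every recovery sequence to the constant one, so that the infimum over $h_n \to h$ factorizes through $\{t_n\}$ alone and yields $\tilde{Q}(\bar u,\bar p;h)$. Your explicit discussion of why no non-constant recovery sequence can produce a smaller value is exactly the point the paper settles with \cref{lem:invariant}, so nothing is missing.
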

\begin{proof}
	Let $\{t_n\} \in c_0^+$ and $\{h_n\} \subset L^\barr(\Omega)$ be arbitrary such that $h_n \to h$ in $L^\barr(\Omega)$.  
	Setting $y_n :=S(\bar u + t_nh_n)$ and applying \cref{lem:2nd-Taylor,lem:key-limit}, we derive for $n$ large enough that
	\begin{multline*}
		F(\bar u + t_n h_n) - F(\bar u) - t_n \int_\Omega w h_n \dx \\
		\begin{aligned}
		 	&=  \int_\Omega \int_0^1 (1-s)\frac{\partial^2 L}{\partial y^2}(x, \bar y + s(y_n -\bar y))(y_n - \bar y)^2 \ds \dx + \frac{\nu}{2} t_n^2 \norm{h_n}^2_{L^2(\Omega)} + r(y_n) \\
		 	& = \int_\Omega \int_0^1 (1-s)\frac{\partial^2 L}{\partial y^2}(x, \bar y + s(y_n -\bar y))(y_n - \bar y)^2 \ds \dx + \frac{\nu}{2} t_n^2 \norm{h_n}^2_{L^2(\Omega)} \\
		 	& \qquad \qquad \qquad - \frac{1}{2} t_n^2\int_\Omega \1_{\{\bar y \notin E_f \}}\bar p f''_{yy}(\bar y)\left(S'(\bar u) h\right)^2\dx  + \int_\Omega  \bar p \zeta(\bar u; t_n, h_n)\dx + o(t_n^2).
		 \end{aligned}
	\end{multline*}
	Dividing the above equation by $t^2_n/2$, 
	%\textcolor{blue}{%text}
	taking the limit inferior in the obtained result, 
%}%
	and using \eqref{eq:weak-strong-Hadamard-app}, we deduce from the definition of
	$\underline{Q}$  that 
	\begin{multline*}
		\liminf\limits_{n \to \infty} \frac{1}{t_n^2/2}[F(\bar u + t_n h_n) - F(\bar u) - t_n \int_\Omega w h_n \dx] -  \int_\Omega[ \frac{\partial^2 L}{\partial y^2}(x, \bar y) \left(S'(\bar u)h\right)^2 + \nu h^2 ] \dx \\
		\begin{aligned}[t]
			&= 
			 - \int_\Omega \1_{\{ \bar y \notin E_f \}} \bar p f''_{yy}( \bar y)\left(S'( \bar u) h\right)^2 \dx +  2\underline{Q}(\bar u, \bar p; \{t_n\}, \{h_n\}) \\
			& =  - \int_\Omega \1_{\{ \bar y \notin E_f \}} \bar p f''_{yy}( \bar y)\left(S'( \bar u) h\right)^2 \dx +  2\underline{Q}(\bar u, \bar p; \{t_n\}, \{h\}), 
		\end{aligned} 
	\end{multline*}
 	where we have just exploited \cref{lem:invariant} to get the last identity. The definition of strong second subderivative of $F$ and of the non-smooth curvature functional $Q$ thus show the desired identity.
\end{proof}

\medskip

In order to derive the second-order necessary and sufficient optimality conditions for \eqref{eq:P}, we follow \cite{Casas2012,CasasHerzogWachsmuth2012} and introduce the following cone of critical directions: 
%\textcolor{blue}{%text}
\begin{equation}
	\label{eq:critical-cone}
	\mathcal{C}_{L^\barr(\Omega)}(U_{ad}; \bar u) := \{v \in L^\barr(\Omega) \mid v \, \text{satisfies \eqref{eq:critical-direction-1} and } \int_\Omega(\bar p + \nu \bar u) v \dx + \kappa j'(\bar u; v) = 0 \}
\end{equation}
%}%
with
\begin{equation}
	\label{eq:critical-direction-1}
	v(x) \left\{
	\begin{aligned}
		& \geq 0 && \text{ for a.a. } x \in \{ \bar u = \alpha \},\\
		& \leq 0 && \text{ for a.a. } x \in \{ \bar u = \beta \}.
	\end{aligned}
	\right.
\end{equation}
Here $\bar u$ is a given admissible control for which there exist $\bar p \in H^1(\Omega) \cap C(\overline\Omega)$ and $\bar \lambda \in \partial j(\bar u)$ satisfying \eqref{eq:1st-OS} for $\chi(\cdot) = \1_{\{ \bar y \notin E_f \}}(\cdot) f'(\bar y(\cdot))$. Recall from \cref{cor:projections} that $\bar p$ and $\bar \lambda$ are uniquely determined when $\lambda^N(\{ \bar y  \in E_f\}) =0$.
\begin{proposition}[{cf. \cite[Prop.~3.4]{Casas2012}}]
	\label{prop:critical-cone-closed-convex}
	Under the condition that $\lambda^N(\{ \bar y  \in E_f\}) =0$, the set $\mathcal{C}_{L^\barr(\Omega)}(U_{ad}; \bar u)$ is a closed and convex cone in $L^\barr(\Omega)$.
\end{proposition}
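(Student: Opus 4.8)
The plan is to exhibit $\mathcal{C}(U_{ad};\bar u)$ as the intersection of two closed convex cones. Introduce the functional $G : L^2(\Omega) \to \R$ defined by $G(v) := \int_\Omega (\bar p + \nu\bar u) v \dx + \kappa j'(\bar u; v)$, so that, by \eqref{eq:critical-cone}, $\mathcal{C}(U_{ad};\bar u) = K \cap \{v \in L^2(\Omega) : G(v) = 0\}$, where $K := \{v \in L^2(\Omega) : v \text{ satisfies } \eqref{eq:critical-direction-1}\}$. By \eqref{eq:tangent-cone} in \cref{rem:normal-OS}, $K = \textrm{cl}(\cone(U_{ad} - \bar u))$ is a closed convex cone. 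I would first record that $G$ is continuous, convex, and positively homogeneous of degree one: the linear term is clearly such, while $j'(\bar u;\cdot)$, being the directional derivative at $\bar u$ of the convex functional $j$, is sublinear and therefore convex and positively homogeneous; its $L^2(\Omega)$-continuity follows from the explicit formula \eqref{eq:j-dir-der} together with the continuous embedding $L^2(\Omega) \hookrightarrow L^1(\Omega)$ on the bounded domain $\Omega$.

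The crux is the sign estimate $G(v) \geq 0$ for every $v \in K$. Fix such a $v$. Since $\bar\lambda \in \partial j(\bar u)$, the inequality in \eqref{eq:subdiff-directional-relation} gives $\int_\Omega \bar\lambda v \dx \leq j'(\bar u; v)$, whence $G(v) \geq \int_\Omega (\bar p + \nu\bar u + \kappa\bar\lambda) v \dx$. Because $v \in K = \textrm{cl}(\cone(U_{ad}-\bar u))$, the equivalent first-order condition \eqref{eq:norm-OS-equi} yields $\int_\Omega(\bar p + \nu\bar u + \kappa\bar\lambda) v \dx \geq 0$, and hence $G(v) \geq 0$.

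With this in hand the conclusion is immediate. As $G$ is convex, positively homogeneous, and $G(0)=0$, the sublevel set $\{v : G(v) \leq 0\}$ is a convex cone, and it is closed since $G$ is continuous. Because $G \geq 0$ on $K$, the equality $G(v)=0$ is equivalent to $G(v)\leq 0$ for $v \in K$, so that
\[
	\mathcal{C}(U_{ad};\bar u) = K \cap \{v : G(v) = 0\} = K \cap \{v : G(v) \leq 0\},
\]
which displays the critical cone as an intersection of two closed convex cones and is therefore itself a closed convex cone. The hypothesis $\lambda^N(\{\bar y \in E_f\}) = 0$ enters only through \cref{cor:projections}, guaranteeing that $\bar p$ and $\bar\lambda$, and hence $G$ and $\mathcal{C}(U_{ad};\bar u)$, are uniquely determined, so that the statement concerns a well-defined set. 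I expect the only genuine subtlety to be the sign estimate $G \geq 0$ on $K$, where the subdifferential inequality \eqref{eq:subdiff-directional-relation} and the variational inequality \eqref{eq:norm-OS-equi} must be combined in the right order; the remaining cone and closedness bookkeeping is routine.
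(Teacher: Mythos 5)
Your proposal is correct and rests on exactly the same two ingredients as the paper's proof: the convexity (sublinearity) of $j'(\bar u;\cdot)$ and the sign estimate $\int_\Omega(\bar p + \nu\bar u)v\dx + \kappa j'(\bar u;v) \geq 0$ on the cone of directions satisfying \eqref{eq:critical-direction-1}, obtained by combining \eqref{eq:subdiff-directional-relation} with \eqref{eq:norm-OS-equi}--\eqref{eq:tangent-cone}. The only difference is organizational: the paper verifies convexity pointwise for a convex combination $v = tv_1+(1-t)v_2$ and dismisses closedness as obvious, whereas you package the same inequalities once and for all as $\mathcal{C}(U_{ad};\bar u) = K \cap \{G \leq 0\}$, which has the minor virtue of making the closedness and cone properties explicit via the continuity and positive homogeneity of $G$.
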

\begin{proof}
	Obviously, $\mathcal{C}_{L^\barr(\Omega)}(U_{ad}; \bar u)$ is a closed cone in $L^\barr(\Omega)$. We now show the convexity. To that purpose, let $v_1, v_2 \in \mathcal{C}_{L^\barr(\Omega)}(U_{ad}; \bar u)$ and $t \in (0,1)$ be arbitrary and let us set $v := tv_1 + (1-t)v_2$. From the convexity of $j'(\bar u; \cdot)$ due to \eqref{eq:j-dir-der}, one has
	\begin{multline*}
		\int_\Omega(\bar p + \nu \bar u) v \dx + \kappa j'(\bar u; v)  \leq t[\int_\Omega(\bar p + \nu \bar u) v_1 \dx \\
		 +\kappa j'(\bar u; v_1) ] + (1-t)[\int_\Omega(\bar p + \nu \bar u) v_2 \dx +\kappa j'(\bar u; v_2) ] = 0.
	\end{multline*}
	Thanks to \eqref{eq:norm-OS-equi} and \eqref{eq:tangent-cone}, we deduce that
	$
		\int_\Omega(\bar p + \nu \bar u + \kappa \bar \lambda) v \dx \geq 0,
	$
	which together with \eqref{eq:subdiff-directional-relation} gives 
	$
		\int_\Omega(\bar p + \nu \bar u) v \dx + \kappa j'(\bar u; v) \geq 0.
	$
	We thus have $\int_\Omega(\bar p + \nu \bar u) v \dx + \kappa j'(\bar u; v) = 0$ and then $v \in \mathcal{C}_{L^\barr(\Omega)}(U_{ad}; \bar u)$. 
\end{proof}

From now on, let us set
\begin{equation} \label{eq:critical-d}
	\bar d: = \bar p + \nu \bar u + \kappa \bar \lambda.
\end{equation}
By \eqref{eq:normal-OS} and \eqref{eq:tangent-cone}, the following implications hold
\begin{equation*}
	%\label{eq:implications-d}
	\left\{
	\begin{aligned}
		\bar u(x) = \alpha & \implies & \bar d(x) \geq 0,\\
		\bar u(x) = \beta & \implies & \bar d(x) \leq 0,\\
		\alpha < \bar u(x) < \beta & \implies & \bar d(x) = 0
	\end{aligned}
	\right. \quad \text{and} \quad \left\{
	\begin{aligned}
		\bar d(x) > 0 & \implies & \bar u(x) = \alpha,\\
		\bar d(x) < 0 & \implies & \bar u(x) = \beta 
	\end{aligned}
	\right.
\end{equation*}
for a.a. $x \in \Omega$. From this and the fact from \eqref{eq:subdiff-directional-relation}, \eqref{eq:normal-OS}, and \eqref{eq:tangent-cone}   that
\begin{equation} \nonumber
	0 = \int_\Omega(\bar p + \nu \bar u) v \dx + \kappa j'(\bar u; v) \geq \int_\Omega(\bar p + \nu \bar u) v \dx + \kappa  \int_\Omega \bar\lambda v \dx = \int_\Omega \bar d v\dx \geq  0
\end{equation} 
for all $v \in \mathcal{C}_{L^\barr(\Omega)}(U_{ad}; \bar u)$,
we have for a.a. $x \in \Omega$ and for all $v \in \mathcal{C}_{L^\barr(\Omega)}(U_{ad}; \bar u)$ that
\begin{equation} \label{eq:d-v-orthogonal}
	 \bar d(x) v(x) = 0 \quad \text{and} \quad  j'(\bar u; v) = \int_\Omega \bar \lambda v \dx.  
\end{equation}
Combining the last identity with \eqref{eq:j-dir-der} and \eqref{eq:subderivative-j-formulation} yields
\begin{equation} \nonumber
	\int_{\{ \bar u =0 \} } \bar \lambda v \dx = \int_{\{ \bar u =0 \} } |v| \dx,
\end{equation} 
which, along with the fact that $\bar \lambda(x) \in [-1,1]$ for a.a. $x \in \{ \bar u =0 \}$, leads to 
\begin{equation}
	\label{eq:u-0-set-identity}
	\bar \lambda(x)v(x) = |v(x)| \quad \text{ for a.a. } x \in \{ \bar u =0 \} \, \text{and for all } v \in \mathcal{C}_{L^\barr(\Omega)}(U_{ad}; \bar u).
\end{equation}

The following lemma plays an important role in proving the second-order necessary optimality conditions for \eqref{eq:P}.
\begin{lemma}
	\label{lem:key-critical-dense}
	For any $v \in \mathcal{C}_{L^\barr(\Omega)}(U_{ad}; \bar u)$ and $0< \epsilon < \min\{1, \frac{1}{\beta - \alpha}\}$, there exists a  $v_\epsilon \in L^\barr(\Omega)$ such that the following assertions are fulfilled:
	\begin{center}
		\begin{varwidth}{\textwidth}
			\begin{tasks}[label={(\roman*)},label-width={0.5cm}](2)
				\task \label{item:density} $v_\epsilon \to v$ in $L^\barr(\Omega)$ as $\epsilon \to 0^+$;
				\task \label{item:vk-admissible} $\bar u + t v_\epsilon \in U_{ad}$;
				\task \label{item:vk-d-vanish} $v_\epsilon \bar d = 0$ a.a. in $\Omega$; 
				\task \label{item:vk-critical} $v_\epsilon \in \mathcal{C}_{L^\barr(\Omega)}(U_{ad}; \bar u)$;
				\task \label{item:vk-j-linear} $j(\bar u + t v_\epsilon) - j(\bar u) = t \int_\Omega \bar \lambda v_\epsilon \dx$
			\end{tasks}
		\end{varwidth}
	\end{center}
	%\begin{multicols}{2}
%	\begin{enumerate}[label=(\roman*)]
%		\item \label{item:density-admissible} $v_\epsilon \to v$ in $L^\barr(\Omega)$ as $\epsilon \to 0^+$ \quad \text{and} \quad $\bar u + t v_\epsilon \in U_{ad} \quad \forall 0 < t < \epsilon$;
%		%\item \label{item:vk-admissible} $\bar u + t v_\epsilon \in U_{ad} \quad \forall 0 < t < \epsilon$;
%		\item \label{item:vk-d-vanish-critical} $v_\epsilon \bar d = 0$ a.a. in $\Omega$ \quad \text{and} \quad  $v_\epsilon \in \mathcal{C}_{L^\barr(\Omega)}(U_{ad}; \bar u)$;
%		%\item \label{item:vk-critical} $v_\epsilon \in \mathcal{C}_{L^\barr(\Omega)}(U_{ad}; \bar u)$;
%		\item \label{item:vk-j-linear} $j(\bar u + t v_\epsilon) - j(\bar u) = t \int_\Omega \bar \lambda v_\epsilon \dx, \quad \forall 0 < t < \epsilon^3$.
%	\end{enumerate}
%	%\end{multicols}
	for all $0 < t < \epsilon^3$, where $\bar d$ is defined as in \eqref{eq:critical-d}.
\end{lemma}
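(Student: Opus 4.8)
The plan is to obtain $v_\epsilon$ as a pointwise \emph{restriction} of the given critical direction $v$, keeping its values untouched on a measurable set $\Omega_\epsilon$ that exhausts $\Omega$ up to a null set as $\epsilon \to 0^+$ and discarding them elsewhere; that is, I set $v_\epsilon := v\,\1_{\Omega_\epsilon}$. Keeping the values of $v$ rather than truncating them is what will make \ref{item:vk-d-vanish} and \ref{item:vk-critical} nearly automatic, since $v\bar d = 0$ a.e.\ by \eqref{eq:d-v-orthogonal} and the sign pattern of $v$ on $\{\bar u = \alpha\}$, $\{\bar u = \beta\}$, $\{\bar u = 0\}$ is then inherited verbatim. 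I would take $\Omega_\epsilon$ to be the set of all $x \in \Omega$ for which the three pointwise inequalities
\[
	\epsilon\, v^+(x) \le \beta - \bar u(x), \qquad \epsilon\, v^-(x) \le \bar u(x) - \alpha, \qquad \epsilon^3 |v(x)| \le |\bar u(x)| \ \text{ whenever } \bar u(x) \neq 0
\]
hold simultaneously. The first two inequalities encode admissibility of $\bar u + t v_\epsilon$ over $0<t<\epsilon$ (they express that the displacement stays within the room $\beta-\bar u$, resp.\ $\bar u-\alpha$, available at $x$), while the third, which is stricter near $\{\bar u = 0\}$, encodes the non-vanishing of the sign of $\bar u + tv_\epsilon$ over the smaller range $0<t<\epsilon^3$; this is exactly the origin of the two different thresholds $\epsilon$ and $\epsilon^3$ appearing in \ref{item:vk-admissible} and \ref{item:vk-j-linear}.

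Granting this construction, I would verify \ref{item:vk-admissible} by noting that on $\{v_\epsilon > 0\}$ the first inequality gives $t v_\epsilon < \epsilon\, v_\epsilon \le \beta - \bar u$, hence $\bar u + t v_\epsilon \le \beta$ and trivially $\ge \alpha$; the symmetric argument on $\{v_\epsilon < 0\}$ uses the second inequality, with the smallness $\epsilon < \tfrac{1}{\beta-\alpha}$ entering the elementary estimate that bounds the displacement by the box width. For \ref{item:vk-j-linear}, on $\{\bar u \neq 0\}$ the third inequality yields $t|v_\epsilon| < \epsilon^3 |v_\epsilon| \le |\bar u|$ for $0 < t < \epsilon^3$, so $\bar u + t v_\epsilon$ retains the sign of $\bar u$; splitting $\Omega$ into $\{\bar u > 0\}$, $\{\bar u < 0\}$, $\{\bar u = 0\}$ and using \eqref{eq:j-dir-der}–\eqref{eq:subderivative-j-formulation} together with the identity $\bar\lambda v_\epsilon = |v_\epsilon|$ on $\{\bar u = 0\}$ then gives $j(\bar u + tv_\epsilon) - j(\bar u) = t\int_\Omega \bar\lambda v_\epsilon\dx$.

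The remaining assertions are then light. For \ref{item:vk-d-vanish}, multiplying $v\bar d = 0$ by $\1_{\Omega_\epsilon}$ gives $v_\epsilon \bar d = 0$ at once. For \ref{item:vk-critical}, the sign conditions \eqref{eq:critical-direction-1} hold because $v_\epsilon \in \{0,v\}$ pointwise and $v$ already obeys them, while for the critical equation in \eqref{eq:critical-cone} I would use $\int_\Omega \bar d\, v_\epsilon \dx = 0$ to get $\int_\Omega(\bar p + \nu\bar u)v_\epsilon\dx = -\kappa\int_\Omega \bar\lambda v_\epsilon\dx$, so that $\int_\Omega(\bar p + \nu\bar u)v_\epsilon\dx + \kappa j'(\bar u; v_\epsilon)$ collapses to $\kappa\int_{\{\bar u = 0\}}(|v_\epsilon| - \bar\lambda v_\epsilon)\dx$, which vanishes by \eqref{eq:u-0-set-identity} and $v_\epsilon = v\,\1_{\Omega_\epsilon}$ on $\{\bar u=0\}$. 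Finally, for \ref{item:density} I would show that $(\Omega\setminus\Omega_\epsilon)\cap\{v\neq0\}$ has vanishing measure: on $\{v>0\}$ one has $\bar u < \beta$ a.e.\ (since $v\le0$ on $\{\bar u=\beta\}$), so $\epsilon v^+ \le \beta-\bar u$ holds pointwise for $\epsilon$ small, and analogously for the other two inequalities; dominated convergence with $|v_\epsilon| \le |v| \in L^2(\Omega)$ then yields $v_\epsilon \to v$ in $L^2(\Omega)$.

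The main obstacle is reconciling \ref{item:vk-admissible} with \ref{item:vk-j-linear}: admissibility must survive on the comparatively large range $0<t<\epsilon$ and therefore forces $v_\epsilon$ to respect the distance of $\bar u$ to both box endpoints, whereas the exact linearization forbids any sign change of $\bar u + tv_\epsilon$ and is genuinely delicate on the set where $\bar u$ is small but nonzero, which is what forces the stronger bound $\epsilon^3|v_\epsilon| \le |\bar u|$ and the smaller range $0<t<\epsilon^3$. Balancing these competing requirements within a single set $\Omega_\epsilon$, while still ensuring $\Omega_\epsilon$ exhausts $\Omega$ so that \ref{item:density} holds, is the crux; once $\Omega_\epsilon$ is chosen correctly, \ref{item:vk-d-vanish} and \ref{item:vk-critical} follow essentially for free from $v\bar d = 0$ and the structure of $\partial j(\bar u)$ on $\{\bar u = 0\}$.
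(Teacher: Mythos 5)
Your proposal is correct and takes essentially the same approach as the paper: both define $v_\epsilon$ by zeroing out $v$ off a good set on which $\bar u + \epsilon v \in [\alpha,\beta]$ together with a smallness condition near $\{\bar u \neq 0\}$, and both then verify the five assertions from convexity of $[\alpha,\beta]$, the orthogonality $v\bar d = 0$ from \eqref{eq:d-v-orthogonal}, the identity $\bar\lambda v = |v|$ on $\{\bar u = 0\}$ from \eqref{eq:u-0-set-identity}, and dominated convergence. The only (harmless) deviation is in the calibration near small nonzero $\bar u$: the paper keeps points with $|\bar u| \geq \epsilon$ and truncates via $\proj_{[-1/\epsilon^2,+1/\epsilon^2]}$ so that $t|v_\epsilon| < \epsilon^3/\epsilon^2 \leq |\bar u|$ for $t<\epsilon^3$, whereas you impose the pointwise bound $\epsilon^3|v| \leq |\bar u|$ directly, which yields the same sign preservation without truncation and is equally sufficient for the lemma's use in \cref{thm:2nd-OS-nec} (in your variant the hypothesis $\epsilon < \tfrac{1}{\beta-\alpha}$ is in fact never needed, while the paper uses it to keep the projection inactive on $\{\bar u = 0\}$).
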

\begin{proof}
	For any $0< \epsilon < \min\{1, \frac{1}{\beta - \alpha}\}$, we define the function $v_\epsilon$ as follows
	\[
		v_\epsilon(x):= \proj_{[-1/\epsilon^2,+1/\epsilon^2]}(v(x)) 
	\]
	whenever $\bar u(x) + \epsilon v(x) \in [\alpha, \beta]$ and $(|\bar u(x)| = 0 \, \text{or } |\bar u(x)| \geq \epsilon)$; $v_\epsilon(x) := 0$, otherwise. 
	%\begin{equation} \nonumber
	%	v_\epsilon(x) := \left\{
	%	\begin{aligned}
	%		& \proj_{[-1/\epsilon^2,+1/\epsilon^2]}(v(x)) && \text{if } \bar u(x) + \epsilon v(x) \in [\alpha, \beta] \, \text{and } (|\bar u(x)| = 0 \, \text{or } |\bar u(x)| \geq \epsilon),\\
	%		& 0 && \text{otherwise}
	%	\end{aligned}
	%	\right.
	%\end{equation} 
	%for a.a. $x \in \Omega$. 
	Obviously, $|v_\epsilon(x)| \leq |v(x)|$ for a.a. $x \in \Omega$ and $v_\epsilon \to v$ a.a. in $\Omega$ as $\epsilon \to 0^+$.
	The limit in assertion \ref{item:density} thus follows from Lebesgue's dominated convergence theorem. 
	Besides, we deduce for all $0< t <\epsilon^3 < \epsilon$ that $\bar u(x) + tv_\epsilon(x)$ belongs to
	\begin{equation*} \nonumber
		 \left\{
		\begin{aligned}
		& [\bar u(x), \bar u(x) + tv(x)]  && \text{if } \bar u(x) + \epsilon v(x) \in [\alpha, \beta], v(x) \geq 0 \, \text{and } (|\bar u(x)| = 0 \, \text{or } |\bar u(x)| \geq \epsilon),\\
		& [\bar u(x) + tv(x), \bar u(x)]   && \text{if } \bar u(x) + \epsilon v(x) \in [\alpha, \beta], v(x) < 0 \, \text{and } (|\bar u(x)| = 0 \, \text{or } |\bar u(x)| \geq \epsilon),\\
		& \{\bar u(x)\} && \text{otherwise},
		\end{aligned}
		\right.
	\end{equation*} 
	which shows the inclusion in assertion \ref{item:vk-admissible}.
	Moreover, thanks to the first relation in \eqref{eq:d-v-orthogonal}, we have the identity in assertion \ref{item:vk-d-vanish}. 
	To prove assertions \ref{item:vk-critical} and  \ref{item:vk-j-linear}, we need to show the following identity
	\begin{equation}
		\label{eq:u-0-set-identity-auxi}
		\bar \lambda(x)v_\epsilon(x) = |v_\epsilon(x)| \quad \text{ for a.a. } x \in \{ \bar u =0 \}.
	\end{equation}
	To that end, for a.a. $x \in \{ \bar u =0\}$, we have from the definition of $v_\epsilon$ that
	\begin{equation} \nonumber
		v_\epsilon(x) = \left\{
		\begin{aligned}
		& \proj_{[-1/\epsilon^2,+1/\epsilon^2]}(v(x)) && \text{if } \epsilon v(x) \in [\alpha, \beta],\\
		& 0 && \text{otherwise}.
		\end{aligned}
		\right.
	\end{equation} 
	Since $\frac{1}{\epsilon}[\alpha, \beta] \subset [-1/\epsilon^2,+1/\epsilon^2]$ for $0< \epsilon < \frac{1}{\beta-\alpha}$, then there holds for a.a. $x \in \{ \bar u =0\}$ that
	\begin{equation*}
		%\label{eq:u-0-set}
		v_\epsilon(x) = \left\{
		\begin{aligned}
		& v(x) && \text{if } \epsilon v(x) \in [\alpha, \beta],\\
		& 0 && \text{otherwise},
		\end{aligned}
		\right.	
	\end{equation*}
	which, along with \eqref{eq:u-0-set-identity}, yields \eqref{eq:u-0-set-identity-auxi}.	
	Combining \eqref{eq:u-0-set-identity-auxi} with \eqref{eq:j-dir-der} and \eqref{eq:subderivative-j-formulation} yields
	$
		j'(\bar u; v_\epsilon) = \int_\Omega \bar \lambda v_\epsilon \dx,
	$
	which, 
	%\textcolor{blue}{%text}
	in combination with 
%}%
	the identity in \ref{item:vk-d-vanish}, gives 
	\begin{equation} \nonumber
		\int_\Omega (\bar p + \nu \bar u)v_\epsilon \dx + \kappa j'(\bar u; v_\epsilon) = \int_\Omega \bar d v_\epsilon \dx = 0.
	\end{equation} 
	%\textcolor{blue}{%text}
	On the other hand, we deduce from the definition of $v_\epsilon$ and from the fact $v$ satisfies \eqref{eq:critical-direction-1} that $v_\epsilon \geq 0$ a.a. in $\{ \bar u = \alpha \}$ and  $v_\epsilon \leq 0$ a.a. in $\{ \bar u = \beta \}$.
%}%
	Then, the inclusion in assertion \ref{item:vk-critical} is verified.
	For \ref{item:vk-j-linear}, we first analyze the case where for a.a. $x \in \{\bar u>0 \}$. In this situation, there holds
	\begin{equation} \nonumber
		v_\epsilon(x) = \left\{
		\begin{aligned}
		& \proj_{[-1/\epsilon^2,+1/\epsilon^2]}(v(x))  && \text{if } \bar u(x) + \epsilon v(x) \in [\alpha, \beta] \, \text{and } \bar u(x) \geq \epsilon,\\
		& 0 && \text{otherwise}.
		\end{aligned}
		\right.
	\end{equation} 
	This implies for all $0 < t < \epsilon^3$ that
	$
	 	\bar u(x) + t v_\epsilon(x) > \min \{0, \epsilon + \epsilon^3 \frac{-1}{\epsilon^2} \} = 0.
	$
	Combining this with the fact that $\bar \lambda(x) = 1$ for a.a. $x \in \{\bar u>0 \}$ gives 
	\begin{equation} \nonumber
		|\bar u(x) + t v_\epsilon(x)| - |\bar u(x)| = \bar u(x) + t v_\epsilon(x) - \bar u(x) = t \bar\lambda(x) v_\epsilon(x).
	\end{equation} 
	Analogously, there holds $|\bar u(x) + t v_\epsilon(x)| - |\bar u(x)| = t \bar\lambda(x) v_\epsilon(x)$ for a.a. $x \in \{\bar u<0 \}$ and for all $0 < t < \epsilon^3$. From these and \eqref{eq:u-0-set-identity-auxi}, we deduce that
	\begin{equation} \nonumber
		|\bar u(x) + t v_\epsilon(x)| - |\bar u(x)| = t \bar \lambda(x) v_\epsilon(x)	
	\end{equation} 
	for a.a. $x \in \Omega$ and for all $0 < t < \epsilon^3$. Assertion \ref{item:vk-j-linear} then follows.
\end{proof}

We  are now  ready to state the second-order optimality conditions for \eqref{eq:P} in terms of the curvature functional $Q$ defined in \cref{def:curvature-func-app}. 
\begin{theorem}[second-order necessary optimality condition] 
	\label{thm:2nd-OS-nec}
	Let $\bar u$ be a local minimizer of \eqref{eq:P} such that the associated state $\bar y:= S(\bar u)$ fulfills the structural assumption \eqref{eq:structure-non-diff}. 
	{%text}
	Then there exist a unique adjoint state $\bar p \in H^1_0(\Omega) \cap C(\overline\Omega)$ and a unique multiplier $\bar \lambda \in \partial j(\bar u)$ satisfying
}%
	\eqref{eq:1st-OS} for $\chi(\cdot) = \1_{\{ \bar y \notin E_f \}}(\cdot) f'(\bar y(\cdot))$. Moreover, the following second-order necessary optimality condition holds:
	\begin{equation}
		\label{eq:2nd-OS-nec}
		Q(\bar u, \bar p;h)  \geq 0 \qquad\text{for all }h\in \mathcal{C}_{L^\barr(\Omega)}({U}_{ad};\bar u).
	\end{equation}
\end{theorem}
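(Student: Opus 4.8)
The plan is to reduce everything to the ingredients already assembled: obtain the multipliers from the first-order theory, write the objective increment via the second-order Taylor expansion of \cref{lem:2nd-Taylor}, and pass to the limit along a \emph{synchronized} diagonal sequence built from \cref{lem:key-critical-dense}, using the key limit \cref{lem:key-limit} together with the invariance \cref{lem:invariant}. For the first claim, note that \eqref{eq:structure-non-diff} forces $\lambda^N(\{\bar y \in E_f\})=0$; hence \cref{thm:1st-OS} supplies $\bar p\in H^1_0(\Omega)\cap C(\overline\Omega)$ and $\bar\lambda\in\partial j(\bar u)$ satisfying \eqref{eq:1st-OS} with $\chi=\1_{\{\bar y\notin E_f\}}f'(\bar y)$, and \cref{cor:projections} gives their uniqueness. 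So only \eqref{eq:2nd-OS-nec} requires work.

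Fix $h\in\mathcal{C}(U_{ad};\bar u)$ and $\delta>0$. By the definition of $\tilde Q$ I would first choose $\{t_n\}\in c_0^+$ with $\underline Q(\bar u,\bar p;\{t_n\},\{h\})\le \tilde Q(\bar u,\bar p;h)+\delta$, and then select $\epsilon_n\to 0^+$ with $\epsilon_n^3>t_n$ (possible since $t_n^{1/3}\to 0$). Applying \cref{lem:key-critical-dense} with $v:=h$ yields directions $h_n:=v_{\epsilon_n}$ enjoying $h_n\to h$ in $L^2(\Omega)$ (so $h_n\rightharpoonup h$), $\bar u+t_nh_n\in U_{ad}$, $\bar d\,h_n=0$ a.e., and $j(\bar u+t_nh_n)-j(\bar u)=t_n\int_\Omega\bar\lambda h_n\dx$, the latter two being valid because $0<t_n<\epsilon_n^3$. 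Local optimality of $\bar u$ then gives $0\le J(\bar u+t_nh_n)-J(\bar u)$ for $n$ large. Inserting the expansion of \cref{lem:2nd-Taylor} for $F$ and the linear form of the $j$-increment, the first-order contributions collapse to $t_n\int_\Omega\bar d\,h_n\dx=0$ via \eqref{eq:critical-d} and \cref{lem:key-critical-dense}\ref{item:vk-d-vanish}, leaving, after division by $t_n^2/2$ and with $y_n:=S(\bar u+t_nh_n)$,
\[
0\le \frac{2}{t_n^2}\int_\Omega\int_0^1(1-s)\frac{\partial^2 L}{\partial y^2}(x,\bar y+s(y_n-\bar y))(y_n-\bar y)^2\ds\dx+\nu\norm{h_n}_{L^2(\Omega)}^2+\frac{2}{t_n^2}r(y_n).
\]

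Now I would take $\liminf_{n\to\infty}$. By \eqref{eq:weak-strong-Hadamard-app} one has $(y_n-\bar y)/t_n\to S'(\bar u)h$ in $C(\overline\Omega)$, so dominated convergence (using \cref{ass:integrand}) makes the first term converge to $\int_\Omega\frac{\partial^2 L}{\partial y^2}(x,\bar y)(S'(\bar u)h)^2\dx$, while $\nu\norm{h_n}^2\to\nu\norm{h}^2$ by strong convergence. For the remainder, \cref{lem:key-limit} and then \cref{lem:invariant} give $\liminf_n\frac{2}{t_n^2}r(y_n)=-\int_\Omega\1_{\{\bar y\notin E_f\}}\bar p\,f''_{yy}(\bar y)(S'(\bar u)h)^2\dx+2\underline Q(\bar u,\bar p;\{t_n\},\{h\})$. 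Since the first two terms converge genuinely, the $\liminf$ of the sum splits, and recalling \cref{def:curvature-func-app} together with the choice $\underline Q(\bar u,\bar p;\{t_n\},\{h\})\le\tilde Q(\bar u,\bar p;h)+\delta$, I obtain $0\le Q(\bar u,\bar p;h)+2\delta$. Letting $\delta\to 0^+$ yields \eqref{eq:2nd-OS-nec}.

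The main obstacle, and the reason for the synchronized construction, is the direction of the available semicontinuity. The tempting route of proving $Q(\bar u,\bar p;v_\epsilon)\ge 0$ for each fixed $\epsilon$ and then sending $v_\epsilon\to h$ is blocked, because \cref{prop:wlsc-key-term} only bounds $Q(\bar u,\bar p;h)$ from \emph{above}, which is useless for a lower bound. The fix is to let the two null sequences vary together: pick $\{t_n\}$ so that $\underline Q(\bar u,\bar p;\{t_n\},\{h\})$ nearly attains the infimum defining $\tilde Q(\bar u,\bar p;h)$, while still respecting the feasibility window $t_n<\epsilon_n^3$ demanded by \cref{lem:key-critical-dense}, and exploit \cref{lem:invariant} to replace the moving directions $v_{\epsilon_n}$ by the fixed $h$ inside $\underline Q$. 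Checking that these constraints are mutually compatible, and that the smooth quadratic terms converge (not merely satisfy a one-sided $\liminf$ inequality), is the delicate part of the argument.
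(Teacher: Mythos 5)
Your proof is correct and follows essentially the same route as the paper: the same synchronized use of \cref{lem:key-critical-dense} (feasibility window $t_n<\epsilon_n^3$), the Taylor expansion of \cref{lem:2nd-Taylor}, the key limit of \cref{lem:key-limit}, and the invariance property of \cref{lem:invariant}, with the first-order terms collapsing via $\bar d\,h_n=0$. The only (immaterial) difference is in handling the infimum defining $\tilde{Q}$: the paper proves the inequality for an \emph{arbitrary} $\{t_n\}\in c_0^+$ after extracting a subsequence along which the liminf in $\underline{Q}$ is attained, whereas you fix a near-infimizing $\{t_n\}$ and conclude by a $\delta$-argument; both are equivalent bookkeeping for the same estimate.
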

\begin{proof}
	Thanks to \eqref{eq:structure-non-diff}, there holds $\lambda^N(\{\bar y  \in E_f \}) = 0$ and thus $S$ is G\^{a}teaux-differentiable in $\bar u$ as a result of \cref{prop:G-diff-control2state}.
	In view of  \cref{thm:1st-OS} and \cref{cor:projections}, $\bar p \in H^1_0(\Omega) \cap C(\overline\Omega)$ and $\bar \lambda \in \partial j(\bar u)$ exist uniquely and satisfy \eqref{eq:1st-OS}. 
	It remains to prove \eqref{eq:2nd-OS-nec}. To do this, let $h \in \mathcal{C}_{L^\barr(\Omega)}({U}_{ad};\bar u)$ and $\{t_n\} \in c_0^+$ be arbitrary but fixed. It suffices to show that
	\begin{multline}
		\label{eq:2nd-OS-nec-2}
		\int_\Omega[ \frac{\partial^2 L}{\partial y^2}(x, \bar y) \left(S'( \bar u) h\right)^2 + \nu h^2 ] \dx 
		- \int_\Omega \1_{\{ \bar y \notin E_f \}}  \bar p f''_{yy}( \bar y)(S'(\bar u) h)^2 \dx \\
		 +  2\underline{Q}(\bar u,\bar p;\{t_n\},\{h\}) \geq 0.
	\end{multline}    
	In order to get \eqref{eq:2nd-OS-nec-2}, by \eqref{eq:key-term-sn}, a subsequence $\{t_{n_k}\}$ of $\{t_n\}$ exists and fulfills
	\begin{equation*}
		%\label{eq:sigma-subsequence}
		\underline{Q}(\bar u,\bar p;\{t_n\},\{h\}) =  \lim_{k \to \infty} \frac{1}{t_{n_k}^2} \int_\Omega \bar p \zeta(\bar u; t_{n_k}, h)  \dx.
	\end{equation*}
	Applying  \cref{lem:key-critical-dense} for $\epsilon := (2t_{n_k})^{1/3} < \min\{1, \frac{1}{\beta-\alpha}\}$ with sufficiently large $k$ yields the existence of an $h_k \in \mathcal C_{L^\barr(\Omega)}(U_{ad}, \bar u)$ such that
	\begin{equation*}
		h_k \to h \quad \text{in } L^\barr(\Omega), \quad  h_k \bar d = 0 \quad \text{a.a. in } \Omega, \quad \bar u + t_{n_k}h_k \in U_{ad},  	
	\end{equation*}
	and 
	\[
		j(\bar u + t_{n_k}h_k) - j(\bar u) = t_{n_k} \int_\Omega \bar \lambda h_k \dx.
	\]
	By setting $u_k:= \bar u + t_{n_k}h_k$ and $y_k := S(\bar u + t_{n_k}h_k)$ and using the optimality of $\bar u$, we then deduce from \cref{lem:2nd-Taylor} that
	\begin{multline*}
		0  \leq J_\kappa(u_k) - J_\kappa(\bar u) = F(u_k) - F(\bar u) + \kappa (j(u_k) - j(\bar u)) \\
		\begin{aligned}
			&= \int_\Omega\int_0^1 (1-s) \frac{\partial^2 L}{\partial y^2}(x,\bar y+ s(y_k -\bar y))(y_k - \bar y)^2 \ds \dx \\
			& \qquad + \frac{\nu}{2} t_{n_k}^2 \norm{h_k}_{L^2(\Omega)}^2 + t_{n_k} \int_\Omega (\bar p + \nu \bar u) h_k \dx + t_{n_k} \kappa \int_\Omega \bar \lambda h_k \dx + r(y_k) \\
			& = \int_\Omega\int_0^1 (1-s) \frac{\partial^2 L}{\partial y^2}(x,\bar y+ s(y_k -\bar y))(y_k - \bar y)^2 \ds \dx \\
			& \qquad \qquad  + \frac{\nu}{2} t_{n_k}^2 \norm{h_k}_{L^2(\Omega)}^2 +t_{n_k} \int_\Omega \bar d h_k \dx +r(y_k)\\
			& = \int_\Omega\int_0^1 (1-s) \frac{\partial^2 L}{\partial y^2}(x,\bar y+ s(y_k -\bar y))(y_k - \bar y)^2 \ds \dx + \frac{\nu}{2} t_{n_k}^2 \norm{h_k}_{L^2(\Omega)}^2  +r(y_k) 
		\end{aligned}
	\end{multline*}
	for $k$ large enough. Dividing the above inequality by $t_{n_k}^2/2$ and 
	{%text}
	taking the limit inferior in the obtained result, 
}%
	we therefore derive \eqref{eq:2nd-OS-nec-2} from \cref{prop:control-to-state}, \cref{lem:key-limit,lem:invariant}.
\end{proof}
{%text}
	For the second-order sufficient optimality conditions, we require that $\barr =2$ and thus the dimension $N$ needs to be not greater than three, due to \eqref{eq:barr-choice}.
}%
\begin{theorem}[second-order sufficient optimality conditions] 
	\label{thm:2nd-OS-suf}
	{%text}
	Assume that $N \in \{1,2,3\}$ and that
}%
	$\bar u$ is an admissible point of \eqref{eq:P} for which \eqref{eq:structure-non-diff} is satisfied by $\bar y:= S(\bar u)$. Assume further that there exist an adjoint state $\bar p \in H^1_0(\Omega) \cap C(\overline\Omega)$ and a multiplier $\bar \lambda \in \partial j(\bar u)$ satisfying \eqref{eq:1st-OS} for $\chi(\cdot) = \1_{\{ \bar y \notin E_f \}}(\cdot) f'(\bar y(\cdot))$. If the following second-order sufficient condition is fulfilled:
	\begin{equation}
		\label{eq:2nd-OS-suff}
		Q(\bar u,\bar p;h) >0
		\qquad\text{for all }h\in \mathcal{C}_{L^2(\Omega)}({U}_{ad};\bar u)\setminus \{0\},
	\end{equation}
	then constants $c, \rho >0$ exist and satisfy
	\begin{equation}
		\label{eq:quadratic-grownth}
		J_\kappa(\bar u) + c \norm{u - \bar u}_{L^2(\Omega)}^2 \leq J_\kappa(u) \quad \text{for all } u \in {U}_{ad} \cap \overline B_{L^2(\Omega)}(\bar u, \rho).
	\end{equation}
	Particularly, $\bar u$ is a strict local minimizer of \eqref{eq:P}.
\end{theorem}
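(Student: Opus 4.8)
The plan is to argue by contradiction and establish the quadratic growth condition \eqref{eq:quadratic-grownth} directly. Suppose it fails. Negating the existence of admissible $c,\rho>0$ and choosing $c=\rho=1/n$, one obtains a sequence $\{u_n\}\subset U_{ad}$ with $u_n\to\bar u$ in $L^2(\Omega)$, $u_n\neq\bar u$, and
\[
	J(u_n) < J(\bar u) + \tfrac{1}{n}\norm{u_n-\bar u}_{L^2(\Omega)}^2.
\]
Setting $t_n:=\norm{u_n-\bar u}_{L^2(\Omega)}>0$ (so $t_n\to0^+$) and $h_n:=(u_n-\bar u)/t_n$, we have $\norm{h_n}_{L^2(\Omega)}=1$; after extracting a subsequence we may assume $h_n\rightharpoonup h$ in $L^2(\Omega)$ for some $h$ with $\norm{h}_{L^2(\Omega)}\le1$. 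I would then show that the weak limit $h$ is a critical direction and feed it into the curvature functional to contradict \eqref{eq:2nd-OS-suff}.

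First I would prove $h\in\mathcal{C}(U_{ad};\bar u)$. The sign conditions \eqref{eq:critical-direction-1} pass to the weak limit, since each $h_n$ satisfies them (as $u_n\in U_{ad}$) and the corresponding set is convex and closed, hence weakly closed in $L^2(\Omega)$; thus $h\in\mathrm{cl}(\cone(U_{ad}-\bar u))$ and, by \eqref{eq:norm-OS-equi} together with \eqref{eq:subdiff-directional-relation}, $\int_\Omega(\bar p+\nu\bar u)h\dx+\kappa j'(\bar u;h)\ge0$. For the reverse inequality, divide the defining estimate by $t_n$ to get $\limsup_n (J(u_n)-J(\bar u))/t_n\le0$. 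Using the weak Gâteaux differentiability of $F$ (\cref{cor:G-diff}), $(F(u_n)-F(\bar u))/t_n\to\int_\Omega(\bar p+\nu\bar u)h\dx$, while \eqref{eq:j-weak-dir-deri} gives $\liminf_n(j(u_n)-j(\bar u))/t_n\ge j'(\bar u;h)$; hence $\int_\Omega(\bar p+\nu\bar u)h\dx+\kappa j'(\bar u;h)\le0$. Combining the two directions yields equality, so $h\in\mathcal{C}(U_{ad};\bar u)$.

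Next I would insert the Taylor-type expansion of \cref{lem:2nd-Taylor} for $F(u_n)-F(\bar u)$ and bound $j(u_n)-j(\bar u)\ge\int_\Omega\bar\lambda(u_n-\bar u)\dx$ from the subdifferential inequality. Writing $\bar d:=\bar p+\nu\bar u+\kappa\bar\lambda$, the first-order contribution becomes $t_n\int_\Omega\bar d\,h_n\dx\ge0$ by \eqref{eq:norm-OS-equi}, so after dropping this nonnegative term the defining estimate reduces to
\[
	\tfrac{2}{n} > \frac{2}{t_n^2}\!\int_\Omega\!\int_0^1(1-s)\frac{\partial^2 L}{\partial y^2}(x,\bar y+s(y_n-\bar y))(y_n-\bar y)^2\ds\dx + \nu\norm{h_n}_{L^2(\Omega)}^2 + \frac{2}{t_n^2}r(y_n),
\]
with $y_n:=S(u_n)$. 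Passing to the limit inferior, \eqref{eq:weak-strong-Hadamard-app} forces $(y_n-\bar y)/t_n\to S'(\bar u)h$, so the first integral tends to $\int_\Omega\frac{\partial^2L}{\partial y^2}(x,\bar y)(S'(\bar u)h)^2\dx$; the term $\nu\norm{h_n}^2_{L^2(\Omega)}\equiv\nu$; and \cref{lem:key-limit,lem:invariant} give $\liminf_n t_n^{-2}r(y_n)\ge-\tfrac12\int_\Omega\1_{\{\bar y\notin E_f\}}\bar p f''_{yy}(\bar y)(S'(\bar u)h)^2\dx+\tilde Q(\bar u,\bar p;h)$. Recalling \cref{def:curvature-func-app}, these combine to
\[
	0 \ge Q(\bar u,\bar p;h) + \nu\bigl(1-\norm{h}_{L^2(\Omega)}^2\bigr).
\]

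Finally I would extract the contradiction. If $h\neq0$, then $h\in\mathcal{C}(U_{ad};\bar u)\setminus\{0\}$, so $Q(\bar u,\bar p;h)>0$ by \eqref{eq:2nd-OS-suff}, while $\nu(1-\norm{h}_{L^2(\Omega)}^2)\ge0$ as $\norm{h}_{L^2(\Omega)}\le1$, contradicting the displayed inequality. If $h=0$, then $S'(\bar u)h=0$ and \cref{lem:Q-defined} gives $\tilde Q(\bar u,\bar p;0)=0$, whence $Q(\bar u,\bar p;0)=0$, and the inequality reads $0\ge\nu>0$, again a contradiction. The main obstacle I anticipate is the critical-cone membership: showing $\int_\Omega(\bar p+\nu\bar u)h\dx+\kappa j'(\bar u;h)=0$ requires the weak directional-derivative estimate \eqref{eq:j-weak-dir-deri} for the non-smooth $L^1$-term to control the limit inferior of the difference quotients of $j$, and the coercive Tikhonov term is essential to rule out loss of mass in the weak limit, i.e. the degenerate case $h=0$.
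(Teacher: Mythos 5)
Your proposal is correct and takes essentially the same route as the paper's proof: the identical contradiction setup with $t_n:=\norm{u_n-\bar u}_{L^2(\Omega)}$ and $h_n\rightharpoonup h$, the same use of \cref{lem:2nd-Taylor}, \cref{lem:key-limit}, \cref{lem:invariant}, \eqref{eq:j-weak-dir-deri}, \eqref{eq:subdiff-directional-relation} and \eqref{eq:normal-OS} to prove $h\in\mathcal{C}(U_{ad};\bar u)$ and to arrive at $0\geq Q(\bar u,\bar p;h)+\nu\bigl(1-\norm{h}_{L^2(\Omega)}^2\bigr)$, followed by the same two-case contradiction ($h\neq0$ via \eqref{eq:2nd-OS-suff}, $h=0$ via the Tikhonov term). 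The only cosmetic differences are that you invoke \cref{cor:G-diff} directly for the first-order limit of $F$ (the paper re-derives this by dividing the Taylor expansion by $t_n$) and that you justify $Q(\bar u,\bar p;0)=0$ explicitly through \cref{lem:Q-defined}, which the paper leaves implicit.
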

\begin{proof}
	We first note that $S$ is G\^{a}teaux-differentiable in $\bar u$ as a result of \cref{prop:G-diff-control2state} and \eqref{eq:structure-non-diff}.	
	We  prove the theorem by a contradiction argument. Suppose the claim was false. Then $\{u_n\} \subset \mathcal{U}_{ad}$ exists and fulfills for all $n\geq 1$ that
	\begin{equation} \label{eq:contradiction}
		\norm{u_n - \bar u}_{L^2(\Omega)} < \frac{1}{n} \quad \text{and} \quad F(\bar u) + \kappa j(\bar u) + \frac{1}{n}\norm{u_n - \bar u}_{L^2(\Omega)}^2 > F(u_n) + \kappa j(u_n). 
	\end{equation}
	Setting $t_n := \norm{u_n - \bar u}_{L^2(\Omega)}$ and $h_n := \frac{u_n - \bar u}{t_n}$ yields $\norm{h_n}_{L^2(\Omega)} = 1$ and $\{t_n\} \in c_0^+$. 
	Moreover, by using a subsequence if necessary, there holds $h_n \rightharpoonup h$ in $L^2(\Omega)$ for some $h \in \textrm{cl}_{L^2(\Omega)}(\textrm{cone}({U}_{ad}-\bar u))$.
	Obviously, \eqref{eq:critical-direction-1} is satisfied by $v:=h_n$ and thus by $v:=h$. 
	We now show that 
	\begin{equation}
		\label{eq:h-critical-dir}
		\int_\Omega(\bar p + \nu \bar u) h \dx + \kappa j'(\bar u; h) = 0,
	\end{equation}
	which leads to $h \in \mathcal{C}_{L^2(\Omega)}({U}_{ad};\bar u)$. To this end, by using the last inequality in \eqref{eq:contradiction}, there holds
	\begin{equation} \nonumber
		F(u_n) - F(\bar u) + \kappa (j(u_n) - j(\bar u)) < \frac{1}{n}t_n^2,
	\end{equation} 
	or, equivalently
	\begin{multline} \label{eq:contraction-auxi}
		 \int_\Omega\int_0^1 (1-s) \frac{\partial^2 L}{\partial y^2}(x,\bar y + s(y_n -\bar y))(y_n - \bar y)^2 \ds \dx  + \frac{\nu}{2}t_n^2 \norm{h_n}_{L^2(\Omega)}^2\\
		\begin{aligned}[t]
		&+ t_n \int_\Omega (\bar p + \nu \bar u) h_n \dx  + r(y_n) + \kappa (j(\bar u + t_n h_n) - j(\bar u)) < \frac{1}{n}t_n^2
		\end{aligned}
	\end{multline}
	 with $y_n := S(u_n)$, due to \cref{lem:2nd-Taylor}. Dividing this by $t_n$, using the fact that $\norm{h_n}_{L^2(\Omega)}=1$, and 
	 {%text}
	 taking to the limit inferior in the obtained result, 
	}%
	 we conclude from \cref{prop:control-to-state}, \cref{lem:key-limit}, and \eqref{eq:j-weak-dir-deri} that $\int_\Omega(\bar p + \nu \bar u) h \dx + \kappa j'(\bar u; h) \leq 0.$
%	 \begin{equation} \nonumber
%	 	\int_\Omega(\bar p + \nu \bar u) h \dx + \kappa j'(\bar u; h) \leq 0.
%	 \end{equation} 
	From this, \eqref{eq:subdiff-directional-relation} and \cref{rem:normal-OS}, we derive
	\begin{equation} \nonumber
		0 \geq \int_\Omega(\bar p + \nu \bar u) h \dx + \kappa j'(\bar u; h) \geq \int_\Omega(\bar p + \nu \bar u) h \dx + \kappa \int_\Omega \bar\lambda h \dx  \geq 0,
	\end{equation} 
	which proves \eqref{eq:h-critical-dir}. 
	On the other hand, combining \eqref{eq:subdiff-directional-relation} with \eqref{eq:normal-OS} yields
	\begin{equation} \label{eq:contraction-first-order-part}
		\frac{t_n \int_\Omega (\bar p + \nu \bar u) h_n \dx   + \kappa (j(\bar u + t_n h_n) - j(\bar u)) }{t_n} \geq \int_\Omega(\bar p + \nu \bar u) h_n \dx + \int_\Omega \kappa\bar \lambda h_n \dx \geq 0.
	\end{equation}
	%%%% \eqref{eq:contraction-first-order-part} is used as (NDC) condition, which is satisfied under a structure assumption of the state $\bar p$; see Wachsmuth-Wachsmuth 2022 (Lemma 2.12 & Lemma 3.21)
	Dividing \eqref{eq:contraction-auxi} by $t_n^2$ and using \eqref{eq:contraction-first-order-part} and the fact that $\norm{h_n}_{L^2(\Omega)} =1$, we arrive at
	\begin{equation} \nonumber
		\int_\Omega\int_0^1 (1-s) \frac{\partial^2 L}{\partial y^2}(x,\bar y + s(y_n -\bar y))\frac{(y_n - \bar y)^2}{t_n^2} \ds \dx + \frac{\nu}{2}  + \frac{1}{t_n^2} r(y_n)  < \frac{1}{n}.
	\end{equation} 
	%\begin{multline*}
	%	\int_\Omega\int_0^1 (1-s) \frac{\partial^2 L}{\partial y^2}(x,\bar y + s(y_n -\bar y))\frac{(y_n - \bar y)^2}{t_n^2} \ds \dx + \frac{\nu}{2}  \\
	%	\begin{aligned}[t]
	%	& +  \frac{t_n \int_\Omega (\bar p + \nu \bar u) h_n \dx   + \kappa (j(\bar u + t_n h_n) - j(\bar u)) }{t_n^2}  + \frac{1}{t_n^2} r(y_n)  < \frac{1}{n}.
	%	\end{aligned}
	%\end{multline*}
	Taking the limit inferior and employing \cref{prop:control-to-state} as well as \cref{lem:key-limit} yield
	\begin{multline*} \nonumber
		\frac{1}{2}\int_\Omega \frac{\partial^2 L}{\partial y^2}(x,\bar y )(S'(\bar u)h)^2  \dx + \frac{\nu}{2} -\frac{1}{2}\int_\Omega \1_{\{\bar y \notin E_f \}}\bar p f''_{yy}(\bar y)\left(S'(\bar u) h\right)^2 \dx \\ + \underline{Q}(\bar u,\bar p; \{t_n\}, \{h_n\}) \leq 0.
	\end{multline*} 
	In view of \cref{lem:invariant}, there holds
	\begin{multline*}
		 \frac{1}{2}\int_\Omega \frac{\partial^2 L}{\partial y^2}(x,\bar y )(S'(\bar u)h)^2  \dx + \frac{\nu}{2} \norm{h}^2_{L^2(\Omega)} -\frac{1}{2}\int_\Omega \1_{\{\bar y \notin E_f \}}\bar p f''_{yy}(\bar y)\left(S'(\bar u) h\right)^2 \dx  \\
		 + \underline{Q}(\bar u,\bar p; \{t_n\}, \{h\})
		+ \frac{\nu}{2} (1 - \norm{h}^2_{L^2(\Omega)}) \leq 0.
	\end{multline*}
	The definition of $Q$ in \cref{def:curvature-func-app} then implies that
	\begin{equation}  \label{eq:contradiction2}
		0 \geq \frac{1}{2}Q(\bar u, \bar p;h)+  \frac{\nu}{2}(1 -\norm{h}_{L^2(\Omega)}^2).
	\end{equation}
	On the other hand, we have $\norm{h}_{L^2(\Omega)} \leq 1$ according to the fact that  $\norm{h_n}_{L^2(\Omega)} = 1$ and the weak lower semincontinuity of the norm in $L^2(\Omega)$.
	From this, \eqref{eq:contradiction2}, and \eqref{eq:2nd-OS-suff}, we deduce that $h = 0$. Inserting $h=0$ into \eqref{eq:contradiction2} gives $0 \geq \frac{\nu}{2} >0$. This is impossible.
\end{proof}

\subsection{An explicit formulation of $Q$ and second-order optimality conditions in the explicit form} \label{sec:2ndOS-explicit-form}

Throughout this subsection, we assume that the following assumption is fulfilled.
\begin{assumption3}
	\item \label{ass:structural-restricted}
	{%text}
	The function $\bar y$ belongs to $C^1(\overline\Omega \cap \mathcal{O}) \cap C_0(\overline\Omega)$ and  satisfies the following implication
	\begin{equation}
		\label{eq:structural-restricted}
		\bar y(x) \in E_f \quad \implies \quad |\nabla \bar y(x)| \neq 0 \quad \forall x \in \overline\Omega \cap \mathcal{O}
	\end{equation}
	for some open set $\mathcal{O}$ in $\Rbb^N$ containing $\{\bar y \in E_f \}$.
}%
	Moreover, if $\tau_i =0$ for some $1 \leq i \leq K$ and if the dimension $N \geq 3$, then there holds
	\begin{equation}
		\label{eq:boundary-intersection-empty}
		\textrm{dist}(\partial \Omega, (\{ \bar y =0 \} \backslash \partial \Omega)) > 0.
	\end{equation}
	Here $\textrm{dist}(U,V)$ stands for the usual distance between two sets $U$ and $V$.
\end{assumption3}
The structural assumption \eqref{eq:structural-restricted} automatically implies \eqref{eq:boundary-intersection-empty} for $N=1$ as a result of \cref{prop:accumlation-none} below. This also holds for $N=2$; see, e.g. \cite[Thm.~2.5 \& Lem.~2.16]{AlbertiBianchiniCrippa2013}.
Besides, the assumption \eqref{eq:structural-restricted} shows that \ref{ass:structure-non-diff} is verified; see, e.g.  \cite[Lem.~3.2]{DeckelnichHinze2012}. 
A kind of this assumption, where a differentiable switching function is assumed to be in place of the state $\bar y$, was used in \cite{Felgenhauer2003} and the references therein to deal with the second-order optimality conditions in the bang-bang controls for the control problem governed by ordinary differential equations. 
A close assumption that is the same with \eqref{eq:structural-restricted} but imposed on the adjoint state $\bar p$ instead of the state $\bar y$
% except for all boundary points $x \in \partial\Omega$
was also exploited in \cite[Sec.~6]{ChristofWachsmuth2018} to get a directional Taylor-like expansion for the $L^1(\Omega)$-norm, which helps to establish the explicit formulation of the directional curvature functional for the indicator function of the interval $[-1,1]$ (see, also, \cite{ChristofMeyer2019}); in \cite[Sec.~4]{WachsmuthWachsmuth2022} to derive the strict twice epi-differentiability of the convex integral functional over measures. 

\medskip 

{%text}
	In order to establish the explicit formulation of $Q$ defined in \cref{def:curvature-func-app}, we shall follow the approach used in \cite[Props.~ 4.6 \& 4.8]{ChristofMeyer2019}; see, also, \cite[Lem.~6.10]{ChristofWachsmuth2018} and \cite[Lem.~3.3 \& Thm.~ 3.4]{WachsmuthWachsmuth2022}. We first formulate the associated formulation in the lower-dimensional setting and the obtained result is then lifted to the $N$-dimensional situation via using the standard partition-of-unity argument.
}%
For the one-dimensional case, i.e., $\Omega := (a,b) \subset \Rbb$, the following result shows that the assumption \eqref{eq:structural-restricted} implies the finiteness of the level set $\{ \bar y \in E_f \}$ and thus the condition \eqref{eq:boundary-intersection-empty}. 
\begin{proposition}
	\label{prop:accumlation-none}
	Let $\Omega := (a,b) \subset \Rbb$ for some $a, b \in \Rbb$.
	Assume that \eqref{eq:structural-restricted} is satisfied. Then,  the set $\{ \bar y \in E_f\}$ has finitely many points.
\end{proposition}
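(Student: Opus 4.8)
The plan is to argue by contradiction, exploiting the compactness of $\overline\Omega = [a,b]$ together with the $C^1$ regularity of $\bar y$. First I would write the level set as the finite union
\[
    \{ \bar y \in E_f \} = \bigcup_{i=1}^K \{ \bar y = \tau_i \},
\]
and suppose, for contradiction, that $\{ \bar y \in E_f \}$ is infinite. Being a finite union, at least one of the sets $\{ \bar y = \tau_{i_0} \}$ must then contain infinitely many points.

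Next, since $[a,b]$ is compact, this infinite set admits an accumulation point $x_0 \in [a,b]$; that is, there is a sequence $\{x_n\} \subset \{ \bar y = \tau_{i_0} \}$ with $x_n \neq x_0$ and $x_n \to x_0$. The continuity of $\bar y$ forces $\bar y(x_0) = \lim_{n} \bar y(x_n) = \tau_{i_0}$, so that $\bar y(x_0) \in E_f$ as well.

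The decisive step is then to compute the derivative at $x_0$. Since $\bar y \in C^1(\overline\Omega)$, the derivative $\bar y'(x_0)$ exists (as a one-sided derivative if $x_0 \in \{a,b\}$), and along the sequence $\{x_n\}$ every difference quotient vanishes, whence
\[
    \bar y'(x_0) = \lim_{n \to \infty} \frac{\bar y(x_n) - \bar y(x_0)}{x_n - x_0} = \lim_{n \to \infty} \frac{\tau_{i_0} - \tau_{i_0}}{x_n - x_0} = 0.
\]
Consequently $|\nabla \bar y(x_0)| = 0$ while $\bar y(x_0) = \tau_{i_0} \in E_f$, in direct contradiction with the structural implication \eqref{eq:structural-restricted}. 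Therefore $\{ \bar y \in E_f \}$ must be finite.

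The only point requiring a little care is the location of the accumulation point $x_0$: if $x_0$ is interior the two-sided derivative makes the limit above unambiguous, whereas if $x_0 \in \{a,b\}$ the sequence $\{x_n\}$ automatically lies on one side and one invokes the one-sided derivative supplied by $\bar y \in C^1(\overline\Omega)$. Since \eqref{eq:structural-restricted} is assumed for \emph{all} $x \in \overline\Omega$, the contradiction is reached at boundary points as well, so this endpoint bookkeeping is the main (and essentially the only) obstacle, the remainder being a routine use of compactness and the definition of the derivative. As an immediate by-product, the finiteness established here renders the distance condition \eqref{eq:boundary-intersection-empty} automatic in the one-dimensional setting.
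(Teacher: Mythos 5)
Your proof is correct and follows essentially the same route as the paper's: argue by contradiction, extract a convergent sequence in the level set via compactness (Bolzano--Weierstrass), use continuity to fix the common value $\tau_{i_0}\in E_f$ at the accumulation point, and conclude that the difference quotients force $\bar y'(x_0)=0$, contradicting \eqref{eq:structural-restricted}. Your explicit pigeonholing into a single level set $\{\bar y=\tau_{i_0}\}$ and the endpoint (one-sided derivative) bookkeeping are minor tidy-ups of the same argument, not a different method.
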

\begin{proof}
	Assume that the conclusion is false. Then, there exists a sequence $\{s_n \} \subset \{ \bar y \in E_f \}$.  Using the Bolzano--Weierstrass theorem yields that there is a subsequence $\{s_{n_k} \} \subset \{ \bar y \in E_f \}$ such that
	$ s_{n_k} \to \bar s$ as $k \to \infty$, for some $\bar s \in \Rbb$. 
	{%text}
	The closedness of the set 
}%
	$\{ \bar y \in E_f \}$ infers that $\bar s \in \{ \bar y \in E_f \}$. There thus holds 
	$
		\bar y(\bar s) = \tau = \bar y(s_{n_k}) 
	$
	for some $\tau \in E_f$ and for all $k$ large enough, according to the continuity of $\bar y$ over $\overline\Omega$. We then have
	\begin{equation} \nonumber
		\bar y'(\bar s) = \lim\limits_{k \to \infty} \frac{\bar y(s_{n_k}) - \bar y(\bar s)}{s_{n_k} - \bar s} = 0,
	\end{equation} 
	which contradicts \eqref{eq:structural-restricted}.
\end{proof}

\medskip

%%%% An explicit formulation of Q in 1-dimensional case
The upcoming theorem establishing the explicit formulation of $Q$ is prepared by the next lemma, which studies the one-dimensional situation $N=1$. 
{%text}
	We thank comments made from an anonymous reviewer who has suggested using the $H^1$-weak limits in \eqref{eq:yn-H1-convergence} and \eqref{eq:yn-H1-convergence-general} below, instead of exploiting more rigorous conditions.
}%
\begin{lemma}
	\label{lem:curvature-explicit-form-one-dim}
	Let $\Omega := (a,b) \subset \Rbb$ and let $\{t_n \} \in c_0^+$ be arbitrary. 
	Assume that \eqref{eq:structural-restricted} is fulfilled. 
	{%text}
	Let $y_n \in H^1(a,b)$ be such that
	\begin{equation}
		\label{eq:yn-H1-convergence}
		%y_n \to \bar y \quad \text{in } C^1([a,b]) \quad \text{and} \quad 
		\frac{y_n - \bar y}{t_n} \rightharpoonup z \quad \text{weakly in} \quad  H^1(a,b)
	\end{equation}
	for some $z \in H^1(a,b)$. Assume further that $\{a, b\} \cap \{\bar y =\tau_i \} = \emptyset$ for some $1 \leq i \leq K$.
}%
	Then, for any $p \in C([a,b])$, there holds for $T_n^i$ given in \eqref{eq:Tn-i} that
	\begin{equation} \label{eq:Q-explicit-one-dim}
		\frac{1}{t_n^2} \int_{a}^b p T_n^i \ds  \to -\frac{1}{2} \int_{ \{ \bar y = \tau_i\} } \frac{p(x)z(x)^2}{| \bar y'(x)|} \dH^{0}(x).
	\end{equation}
\end{lemma}
\begin{proof}
	{%text}
	By putting $z_n := \frac{y_n - \bar y}{t_n}$ and $\eta_n := z_n - z$, we have $z_n, \eta_n \in H^1(a,b)$. Moreover, we deduce from the limit in \eqref{eq:yn-H1-convergence} and the compact embedding $H^1(a,b) \Subset C([a,b])$ (see, e.g., \cite[Thm.~9.16]{Brezis1}) that 
	\begin{equation}
		\label{eq:zn-limit-uniformly}
		\left\{
		\begin{aligned}
			& z_n \,\text{converges to } z, \\
			& \eta_n \,\text{converges to }  0
		\end{aligned}
		\right.
		\quad \text{weakly in } H^1(a,b) \, \text{and strongly in} \, C([a,b]).
	\end{equation}
%	Furthermore, there holds
%	\begin{equation} \label{eq:yn-z-expression}
%		y_n = \bar y + t_n z + t_n \eta_n  \quad \text{for all } n \geq 1.
%	\end{equation}
}%
	Setting $\epsilon_n := \norm{y_n - \bar y}_{C([a,b])}$ thus yields $\epsilon_n \to 0$ as $n \to\infty$. For any $x \in \{\bar y = \tau_i \}$ and $\rho > 0$, we define the following set
	\begin{equation} \nonumber
		\Omega_{i,x}^\rho := \{ s \in [a,b] : |s - x| < \rho \} = (x -\rho, x+ \rho) \cap [a,b].
	\end{equation} 
	Since $\epsilon_n \to 0$,  a $\rho_0$ exists and satisfies 
	\begin{multline*}
		\{ \bar y \in ( \tau_i, \tau_{i+1} ),  y_n \in (\tau_i - \epsilon_0, \tau_i) \} \cup \{ \bar y \in (\tau_{i-1}, \tau_i), y_n \in (\tau_i , \tau_{i} +\epsilon_0) \} \\
		 \subset \{ |\bar y - \tau_i | \leq \epsilon_n \} \subset \bigcup_{x \in \{ \bar y = \tau_i \}} \Omega_{i,x}^{\rho_0}
	\end{multline*}
	for $n$ large enough. From this and \eqref{eq:Tn-i}, $T_n^i$ can be decomposed as
	\begin{multline*} \nonumber
			T_n^i = \sum_{ x \in \{ \bar y = \tau_i \} } \1_{\Omega_{i,x}^{\rho_0} }  (y_n-\tau_i  )\left( \1_{\{\bar y \in (\tau_i, \tau_{i+1}), y_n \in (\tau_i-\epsilon_0, \tau_i) \}} \right. \\
			\left. - \1_{\{ y_n \in (\tau_{i}, \tau_i+\epsilon_0) , \bar y \in (\tau_{i-1},\tau_i) \}} \right) =: \sum_{ x \in \{ \bar y = \tau_i \} } T_{n}^{i,x}.
	\end{multline*} 
	By \cref{prop:accumlation-none}, the level set $\{\bar y = \tau_i\}$ has finitely many points.
	Therefore, in order to show \eqref{eq:Q-explicit-one-dim}, it suffices to prove that
	\begin{equation} \label{eq:Q-explicit-one-dim-local}
		\frac{1}{t_n^2} \int_{a}^b p T_n^{i,x} \ds  \to -\frac{1}{2}  \frac{p(x)z(x)^2}{| \bar y'(x)|} \quad \text{for all } x \in \{ \bar y = \tau_i \}.
	\end{equation}
	To that end, 
	let $x \in \{ \bar y = \tau_i \}$ be arbitrary, but fixed.	
	Without loss of generality (w.l.o.g.) we assume that $\bar y'(x) >0$.	
	By reducing $\rho_0 >0$ small enough and 
	{%text}
	using the limit in \eqref{eq:yn-H1-convergence}, 
}% 
	we deduce that there exists a positive constant $m$ satisfying
	{%text}
	\begin{equation}
		\label{eq:structure-positive-local}
		\left\{
		\begin{aligned}
			& y'(s) \geq m > 0 && \text{for all} \, s \in \Omega_{i,x}^{\rho_0},\\
			& y_n'(s) \geq m >0 && \text{for a.a.} \, s \in \Omega_{i,x}^{\rho_0} \, \text{and for $n$ sufficiently large. }
		\end{aligned}
		\right.
	\end{equation}
	The second inequality in \eqref{eq:structure-positive-local} as well as a  representation theorem for Sobolev functions (see, e.g., \cite[Thm~8.2]{Brezis1}) implies that $y_n$ is strictly increasing on $\Omega_{i,x}^{\rho_0}$. Therefore, the set $\{ y_n = \tau_i \} \cap \Omega_{i,x}^{\rho_0}$ has at most one element. %We now show \eqref{eq:Q-explicit-one-dim-local} in the following cases:
}%
%	
%	$\bullet$ {Case 1: $x \in \partial \Omega = \{a,b\}$}. In this case, we have $\tau_i = 0$ and thus
%	$\{y_n = \tau_i\} \cap \Omega_{i,x}^{\rho_0} = x$. 
%	Consequently, there holds
%	\begin{equation} \nonumber
%		\{ \bar y \in ( \tau_i, \tau_{i+1} ),  y_n \in (\tau_i - \epsilon_0, \tau_i) \} \cap \Omega_{i,x}^{\rho_0} = \{ \bar y \in (\tau_{i-1}, \tau_i), y_n \in (\tau_i , \tau_{i} +\epsilon_0) \} \cap \Omega_{i,x}^{\rho_0} = \emptyset
%	\end{equation}  
%	for sufficiently large $n$. There then holds $T_n^{i,x} =0$ a.a. in $(a,b)$ for all big enough $n$. Moreover, we conclude from the limit in \eqref{eq:yn-H1-convergence} that $z(a) =z(b) =0$. Then \eqref{eq:Q-explicit-one-dim-local} is valid. 
%	
%	%We now verify \eqref{eq:Q-explicit-one-dim-local} for the case $x \in (a,b)$. 
%	
%	$\bullet$ \emph{Case 2: $x \in (a,b)$}. In the situation, 
	Since $x \in (a,b)$, we can assume (by reducing $\rho_0$ if necessary) that $\Omega_{i,x}^{\rho_0} = (x- \rho_0, x+ \rho_0)$ and thus that 
	\begin{equation*}
		%\label{eq:increasing-local-x}
		\bar y\mid_{(x, x+ \rho_0)} -\tau_i >0 \quad \text{and} \quad \bar y\mid_{(x - \rho_0, x)} - \tau_i < 0.
	\end{equation*}
	
	{%text}
	We now prove that the set $\{ y_n = \tau_i \} \cap \Omega_{i,x}^{\rho_0}$ is a singleton for $n$ large enough. Suppose, by contradiction, that there exists a subsequence, denoted in the same way, such that	
	$\{ y_n = \tau_i \} \cap \Omega_{i,x}^{\rho_0} = \emptyset$ for all $n \geq 1$. We can  w.l.o.g. assume that 
	\[
		y_n(s) > \tau_i \quad \text{for all } s \in \Omega_{i,x}^{\rho_0} = (x- \rho_0, x+ \rho_0) 
	\]
	and for all $n \geq 1$. As a result of \eqref{eq:structure-positive-local}, one has
	$y_n(x - \rho_0) > \tau_i = \bar y(x) > \bar y(x-\rho_0)$. This implies that 
	\[
		\epsilon_n = \norm{y_n - \bar y}_{C([a,b])} \geq y_n(x - \rho_0) - \bar y(x - \rho_0) > \tau_i - \bar y(x - \rho_0) \nrightarrow 0,
	\]
	which contradicts the limit $\epsilon_n \to 0$.
	We have shown that $\{ y_n = \tau_i \} \cap \Omega_{i,x}^{\rho_0}$ is a singleton for $n$ large enough. 
}%

	{%text}
	For $n$ sufficiently large, we denote by $x_n$ the intersection of  $\{ y_n = \tau_i \}$ with $ \Omega_{i,x}^{\rho_0}$.
	Moreover, there holds
	\begin{equation*}
		y_n(s) = \bar y(s) + t_n z(s) + t_n\eta_n(s)  
	\end{equation*}
	for all $s \in [a,b]$ and
	for all $n \geq 1$. The mean value theorem yields 
	\begin{equation} \nonumber
		y_n(x_n) = \bar y(x) + \bar y'(x+ \theta(x_n - x))(x_n -x) + t_n[z(x_n) + \eta_n(x_n)], \quad \theta \in (0,1).
	\end{equation} 
	From this and the fact $\bar y(x) = y_n(x_n) = \tau_i$, we then have for all $n$ large enough that
	$\bar y'(x+ \theta(x_n - x))(x_n -x) = - t_n[z(x_n) + \eta_n(x_n)]$. 
	This, together with the first inequality in \eqref{eq:structure-positive-local}, gives
	\begin{equation}
		\label{eq:xn-x-expression}
		x_n -x = - \frac{t_n}{\bar y'(x+ \theta(x_n - x))} [z(x_n) + \eta_n(x_n)].
	\end{equation}
	Combining this with the limits in \eqref{eq:zn-limit-uniformly} yields $x_n \to x$.
	Dividing  \eqref{eq:xn-x-expression} by $t_n$ and then passing to the limit in the obtained identity, one therefore has
	\begin{equation}
		\label{eq:xn-x-distance}
		\frac{x_n-x}{t_n} \to - \frac{z(x)}{\bar y'(x)} \quad \text{as } n \to \infty,
	\end{equation}	
	thanks to \eqref{eq:zn-limit-uniformly}.
}%
	For large enough $n$, we define the following sets (depending also on $i$):
	\begin{equation} \nonumber
		\left\{
		\begin{aligned}
			\Omega_n^+ &:= \{ \bar y \in ( \tau_i, \tau_{i+1} ),  y_n \in (\tau_i - \epsilon_0, \tau_i) \} \cap \Omega_{i,x}^{\rho_0}, \\
			\Omega_n^{-} & :=  \{ \bar y \in (\tau_{i-1}, \tau_i), y_n \in (\tau_i , \tau_{i} +\epsilon_0) \} \cap \Omega_{i,x}^{\rho_0}.
		\end{aligned}
		\right.
	\end{equation}  	
	We now split the sequence $\{n\}$ into subsequences, still denoted by the same symbol, that satisfy one of the following conditions:
	\begin{multicols}{3}
		\begin{enumerate}[label=(\alph*)]
			\item \label{item:x-equal-xn} $x = x_n$; 
			\item \label{item:x-less-xn} $x < x_n$;
			\item \label{item:x-great-xn} $x >x_n$.
		\end{enumerate}
	\end{multicols}
	For \ref{item:x-equal-xn}, there holds $\Omega_n^+ = \Omega_n^{-} = \emptyset$ and thus $T_n^{i,x} =0$ for $n$ large enough. Moreover, according to  \eqref{eq:yn-H1-convergence}, one has $z(x) = 0$ and we thus have \eqref{eq:Q-explicit-one-dim-local}. For \ref{item:x-less-xn}, we deduce for $n$ large enough that
	$\Omega_n^+ = (x, x_n)$ and $\Omega_n^{-} = \emptyset,
	$
	which infers
	{%}
	\begin{multline} \label{eq:Tn-i-decom}
		\int_{a}^b p T_n^{i,x} \ds  = \int_{a}^b p(y_n - \tau_i) (\1_{\Omega_n^+} - \1_{\Omega_n^{-}}) \ds =  \int_{x}^{x_n} p(s)(y_n(s) - \tau_i) \ds \\
		\begin{aligned}[b]
			& = \int_{x}^{x_n} p(s) \left\{ [\bar y(s) - \tau_i] + t_n [z(s) +  \eta_n(s)] \right\} \ds \\
			& = \int_{x}^{x_n}  p(s) \int_x^{s} \bar y'(t)dt \ds  + t_n \int_{x}^{x_n} p(s) [z(s) +  \eta_n(s)]  \ds \\
			& = \left[\int_{x}^{x_n}  p(s) \int_x^{s} \bar y'(t)dt \ds + t_n \int_{x}^{x_n} p(s) z(s) \ds \right] + t_n\int_{x}^{x_n} p(s) \eta_n(s)  \ds \\
			& =: e_n^1 + e_n^2. 
		\end{aligned}
	\end{multline}
	Obviously, one has
	\begin{align*}
		|e_n^2| & \leq t_n \int_{x}^{x_n}  |p(s)| \norm{ \eta_n}_{C([a,b])} \ds  \leq  \norm{p}_{L^\infty(a,b)} \norm{\eta_n}_{C([a,b])} t_n |x_n -x|,
	\end{align*}
}%
	which, along with \eqref{eq:xn-x-distance} and the second limit in \eqref{eq:zn-limit-uniformly}, yields
	$
		\frac{1}{t_n^2}e_n^2 \to 0.
	$
	Moreover, by exploiting \eqref{eq:xn-x-distance} and the continuity over $[a,b]$ of $p$, a simple computation gives
	\begin{equation} \nonumber
		\frac{1}{t_n^2}e_n^1 \to - \frac{p(x)z(x)^2}{2\bar y'(x)} = - \frac{p(x)z(x)^2}{2|\bar y'(x)|} \quad  \text{as } n \to \infty.
	\end{equation} 
	By using these above limits for $e_n^1$ and $e_n^2$, we have \eqref{eq:Q-explicit-one-dim-local} from \eqref{eq:Tn-i-decom}. Finally, for \ref{item:x-great-xn}, one has
	$\Omega_n^- = (x_n, x)$ and $\Omega_n^{+} = \emptyset$
	for $n$ large enough. Similarly to the case \ref{item:x-less-xn}, we also have \eqref{eq:Q-explicit-one-dim-local}.
\end{proof}

%%% An explicit formulation of Q in general case
Exploiting standard coordinate transform arguments, the result in \cref{lem:curvature-explicit-form-one-dim} can be lifted to the $N$-dimensional case.
\begin{theorem}[Explicit formulation for $Q$]
	\label{thm:Q-explicit-form}
	Let  $\{t_n \} \in c_0^+$ be arbitrary. 
	Assume that \eqref{eq:structural-restricted} and \eqref{eq:boundary-intersection-empty} are fulfilled. 
	{%text}
	Let $y_n \in H^1_0(\Omega) \cap C(\overline\Omega)$ be such that
	\begin{equation}
		\label{eq:yn-H1-convergence-general}
		\norm{y_n - \bar y}_{C(\overline\Omega)} \leq Ct_n  \quad \text{and} \quad \frac{y_n - \bar y}{t_n} \rightharpoonup z \quad \text{weakly in} \quad H^1_0(\Omega)
		%y_n \to \bar y \quad \text{in } C^1(\overline\Omega) \quad \text{and} \quad \frac{y_n - \bar y}{t_n} \to z \quad \text{in } C(\overline\Omega)
	\end{equation}
	for some $z \in C(\overline\Omega)$, for some constant $C>0$, and for all $n \geq 1$. 
}%
	Then, 
	{%text}
	for any $p \in C_0(\overline\Omega)$ 
}%
	and for all $1 \leq i \leq K$, there holds
	\begin{equation} \label{eq:Q-component-explicit-general}
		\frac{1}{t_n^2} \int_\Omega p T_n^i \dx  \to -\frac{1}{2} \int_{ \{ \bar y = \tau_i\} } \frac{p(x)z(x)^2}{| \nabla \bar y(x)|} \dH^{N-1}(x)
	\end{equation}
	with $T_n^i$ given in \eqref{eq:Tn-i}.
	
	In particular, 
	{%text}
		for any $h \in L^\barr(\Omega)$, 
}%
	the following formulas hold
	\begin{equation} \label{eq:Q-title-explicit}
		\tilde{Q}(\bar u, \bar p; h)  = \frac{1}{2} \sum_{i=1}^K \sigma_i \int_{ \{ \bar y = \tau_i \} } \frac{\bar p (S'(\bar u)h)^2}{|\nabla \bar y|} \dH^{N-1} 
	\end{equation} 
	and 
	\begin{multline}
		\label{eq:Q-explicit}
		Q(\bar u, \bar p; h) =  \int_\Omega[ \frac{\partial^2 L}{\partial y^2}(x, \bar y) (S'(\bar u) h)^2 + \nu h^2 ] \dx \\
		\begin{aligned}[t]
			- \int_\Omega \1_{\{ \bar y \notin E_f \}} \bar p f''_{yy}( \bar y)(S'(\bar u) h)^2 \dx + \sum_{i=1}^K \sigma_i \int_{ \{ \bar y = \tau_i \} } \frac{\bar p (S'(\bar u)h)^2}{|\nabla \bar y|} \dH^{N-1}. 
		\end{aligned} 
	\end{multline}
	Here $\bar p$ is the unique adjoint state defined in \cref{thm:1st-OS} and \cref{cor:projections}.
\end{theorem}
\begin{proof}
	{%text}
	We first prove \eqref{eq:Q-component-explicit-general}.
}%
	For that purpose, we  consider the following two cases.
	
	\noindent{\emph{Case 1: $\tau_i \neq 0$}.}
	We start with a local result.
	In this case,  $\{ \bar y = \tau_i \}$ is a compact subset of $\Omega$.
	Fixing $x \in \{ \bar y = \tau_i \}$ 
	{%text}
		and assuming w.l.o.g. that $\partial_{N} \bar y(x) >0$, 
}%
	there exist a bounded open set $B \subset \Rbb^{N-1}$, an open interval $I=(a,b)$ and a function $\gamma \in C^1(\overline B)$ satisfying
	\begin{subequations}
		\label{eq:coordinate-local}
		\begin{align}
			& x = (\bar s, \gamma(\bar s)) \in B \times I, \quad \overline{B \times I} \subset \Omega, \quad \{ \bar y = \tau_i \} \cap (B \times I) = \{(s,\gamma(s)) \mid s \in B \}, \label{eq:coordinate-local-1} \\
			& W:= \{(s,t) \mid s \in B, |t-\gamma(s)| < \epsilon \} \subset B \times I, \quad \partial_{N} \bar y\mid_{B\times I} \geq m_0 >0 \label{eq:coordinate-local-2} 
		\end{align}
	\end{subequations}
	for some $\epsilon >0, m_0 >0$, and $\bar s\in B$; 
	{%text}
	see, e.g. \cite[Form.~(3.7)]{WachsmuthWachsmuth2022} and the proof of Lemma 6.10 in \cite{ChristofWachsmuth2018}.
}%
	Defining the sets
	\begin{equation} \nonumber
		\Omega_{i,n}^+ := \{ \bar y \in ( \tau_i, \tau_{i+1} ),  y_n \in (\tau_i - \epsilon_0, \tau_i) \} \, \text{and} \, \Omega_{i,n}^{-} :=  \{ \bar y \in (\tau_{i-1}, \tau_i), y_n \in (\tau_i , \tau_{i} +\epsilon_0) \}
	\end{equation} 
	yields from \eqref{eq:Tn-i} that
	\begin{equation} \nonumber
		T_n^i = (y_n - \tau_i) (\1_{\Omega_{i,n}^+} -\1_{\Omega_{i,n}^-} ).
	\end{equation} 
	Setting $A_n := (\Omega_{i,n}^+ \cup \Omega_{i,n}^-) \cap (B \times I)$ and taking any $s \in \Rbb^{N-1}$, the section $A_n^s$ of $A_n$ determined by $s$ is then defined as follows
	\begin{equation} \nonumber
		A_n^s := \{ t \in \Rbb \mid (s,t) \in A_n \} = 
		\left\{
		\begin{aligned}
			&\{ t \in I \mid (s,t) \in A_n \} && \text{if } s \in B,\\
			& \emptyset && \text{otherwise}.
		\end{aligned}
		\right. 
	\end{equation} 
	The Fubini theorem thus implies that
	\begin{align} \label{eq:T-n-lim-local}
		\frac{1}{t_n^2} \int_{(\Omega_{i,n}^+ \cup \Omega_{i,n}^-) \cap (B \times I)} p T_n^i \dx & = \frac{1}{t_n^2} \int_{A_n} p T_n^i \dx \nonumber  \\
		& =  \int_B d\lambda^{N-1}(s) \frac{1}{t_n^2}  \int_{A_n^s} p(s,t) T_n^i(s,t) d\lambda^1(t).
	\end{align}
	{%text}
	On the other hand, there holds, for any $s \in B$ and $n \geq 1$, that
	\[
		\1_{A_n^s}(t)T_n^i(s,t) = T_n^i(s,t)\1_{I}(t). % \quad \text{for all } t\in I.
	\]
	}%
	Applying \cref{lem:curvature-explicit-form-one-dim}, one has for all $s \in B$ that
	\begin{equation*}
		\frac{1}{t_n^2}  \int_{A_n^s} p(s,t) T_n^i(s,t) d\lambda^1(t) \to - \frac{1}{2}\frac{p(s,\gamma(s)) z(s,\gamma(s))^2}{|\partial_N \bar y(s,\gamma(s))|} \quad \text{as } n \to\infty. 
	\end{equation*}
	In order to pass to limit in the outer integral of \eqref{eq:T-n-lim-local}, we now prove that
	\begin{equation} \label{eq:Tn-local-bound}
		|\frac{1}{t_n^2}  \int_{A_n^s} p(s,t) T_n^i(s,t) d\lambda^1(t)| \leq C \quad \text{for all $n$ large enough,} 
	\end{equation}
	for all $s \in B$ and for  some positive constant $C$. To that end, 
	{%text}
	setting $\epsilon_n := \norm{y_n - \bar y}_{C(\overline\Omega)}$ yields $\epsilon_n \leq C t_n$, due to the estimate in \eqref{eq:yn-H1-convergence-general}.
}%	
	We observe from \eqref{eq:T-n-esti-almost} that
	\begin{equation} \nonumber
		|T_n^i(s,t)| \leq \epsilon_n \1_{ \{t \in I : |\bar y(s,t) - \tau_i | < \epsilon_n \} }
	\end{equation} 
	for all $s \in B$. On the other hand, the mean value theorem gives
	\begin{align*}
		\bar y(s,t) - \tau_i = \bar y(s,t) - \bar y(s, \gamma(s)) = \partial_{N} \bar y(s, \gamma( s) + \theta (t- \gamma( s)))(t -\gamma( s)), \quad \theta \in (0,1).
	\end{align*}
	Combining this with  the inequality in \eqref{eq:coordinate-local-2} yields
	$
	|\bar y(s,t) - \tau_i| \geq m_0 |t -\gamma( s)|,
	$
	which gives 
	\begin{equation} \nonumber
		\{t \in I : |\bar y(s,t) - \tau_i | < \epsilon_n \} \subset [\gamma( s) - \frac{\epsilon_n}{m_0}, \gamma( s) +  \frac{\epsilon_n}{m_0} ].
	\end{equation} 
	We thus derive
	\begin{equation*} 
		|\frac{1}{t_n^2}  \int_{A_n^s} p(s,t) T_n^i(s,t) d\lambda^1(t)| \leq \frac{2}{m_0} \norm{p}_{C(\overline\Omega)} \frac{\epsilon_n^2}{t_n^2}  
	\end{equation*}
	and then \eqref{eq:Tn-local-bound} follows. We now apply the dominated convergence theorem to the outer integral in \eqref{eq:T-n-lim-local} to obtain
	\begin{equation} \label{eq:Tn-i-lim-local}
		\frac{1}{t_n^2} \int_{(\Omega_{i,n}^+ \cup \Omega_{i,n}^-) \cap (B \times I)} p T_n^i \dx \to - \frac{1}{2} \int_B \frac{p(s,\gamma(s)) z(s,\gamma(s))^2}{|\partial_N \bar y(s,\gamma(s))|} d\lambda^{N-1}(s).
	\end{equation}
	Differentiating $\bar y(s,\gamma(s)) =\tau_i$ for all $s \in B$ yields 
	\begin{equation} \nonumber
		\partial_{j} \bar y(s,\gamma(s)) = - \partial_{N}\bar y(s,\gamma(s))	\partial_{j}\gamma(s) \quad \text{for all } 1 \leq j \leq N-1,
	\end{equation} 
	which infers $|\nabla \bar y(s, \gamma(s))| = |\partial_N \bar y(s,\gamma(s))| {(1+\sum_{j=1}^{N-1} |	\partial_{j}\gamma(s)|^2)^{1/2}}$ for all $s \in B$.
	We therefore derive
	\begin{multline*}
		\int_B \frac{p(s,\gamma(s)) z(s,\gamma(s))^2}{|\partial_N \bar y(s,\gamma(s))|} d\lambda^{N-1}(s) \\
		\begin{aligned}
			& = \int_B \frac{p(s,\gamma(s)) z(s,\gamma(s))^2}{|\nabla \bar y(s, \gamma(s))|}{(1+\sum_{j=1}^{N-1} |	\partial_{j}\gamma(s)|^2)^{1/2}} d\lambda^{N-1}(s)  = \int_{\{ \bar y = \tau_i \} \cap (B\times I)} \frac{p z^2}{|\nabla \bar y|} \dH^{N-1}.
		\end{aligned}
	\end{multline*}
	Combining this with \eqref{eq:Tn-i-lim-local} gives
	\begin{equation*} % \label{eq:Tn-i-lim-local-finish}
		\frac{1}{t_n^2} \int_{(\Omega_{i,n}^+ \cup \Omega_{i,n}^-) \cap (B \times I)} p T_n^i \dx \to - \frac{1}{2} \int_{\{ \bar y = \tau_i \} \cap (B\times I)} \frac{p z^2}{|\nabla \bar y|} \dH^{N-1}.
	\end{equation*}
	Since the set $\{ \bar y = \tau_i \}$ is a compact subset of $\Omega$, then it is covered by a finitely many sets $B\times I$ of type \eqref{eq:coordinate-local}. Then by using a standard partition-of-unity argument, we have \eqref{eq:Q-component-explicit-general} when $\tau_i \neq 0$.

	\noindent{\emph{Case 2: $\tau_i = 0$}.}
	In this situation, we have
	\begin{equation}
		\label{eq:boundary-intersection-2}
		\{ \bar y = 0 \} = \partial\Omega \cup ( \{ \bar y = 0 \} \backslash \partial\Omega ) \quad \text{and} \quad \textrm{dist}(\partial \Omega, (\{ \bar y =0 \} \backslash \partial \Omega)) > 0,
	\end{equation}
	where the last condition is identical to that in \eqref{eq:boundary-intersection-empty}.
	For any $r >0$ and a subset $V \subset \overline\Omega$, we define the  open set $	V^r := \{ s \in \Omega \mid \text{dist}(s,V) < r  \}$,
%	\begin{equation} \nonumber
%		V^r := \{ s \in \Omega \mid \text{dist}(s,V) < r  \},
%	\end{equation} 
	where dist$(s,V)$ is the distance from $s$ to $V$. 
	Take $\epsilon >0$ be arbitrary.
	Thanks to the second condition in \eqref{eq:boundary-intersection-2} and due to 
	{%text}
	the closedness of subsets 
	}%
	$\partial\Omega$ and $\{ \bar y = 0 \} \backslash \partial\Omega$ as well as the fact that 
	{%text}
	$p \in C_0(\overline\Omega)$, 
	there exists an $\epsilon_* >0$ such that
	\begin{equation*}
	(\partial\Omega)^{\epsilon_*} \cap ( \{ \bar y = 0 \} \backslash \partial\Omega )^{\epsilon_*} = \emptyset \quad \text{and} \quad \max\{|p(x)|: x \in \overline{(\partial\Omega)^{\epsilon_*}} \}< (C^2c_s\sigma_{\max})^{-1}\epsilon
	\end{equation*}
	with $c_s$ being the constant in \eqref{eq:structure-non-diff}, $\sigma_{\max}$ determined as in \cref{lem:Q-defined}, and $C$ given in \eqref{eq:yn-H1-convergence-general}.
}%
	Since the set $\{ \bar y = 0 \} \backslash \partial\Omega$ is compact in $( \{ \bar y = 0 \} \backslash \partial\Omega )^{\epsilon_*}$, we deduce from the argument analogous to Case 1 that
	\begin{equation} \label{eq:Tn-zero-lim-nonboundary}
	\frac{1}{t_n^2} \int_{( \{ \bar y = 0 \} \backslash \partial\Omega )^{\epsilon_*}} p T_n^i \dx \to - \frac{1}{2} \int_{\{ \bar y = 0 \} \backslash \partial\Omega} \frac{p z^2}{|\nabla \bar y|} \dH^{N-1}.
	\end{equation}	
	On the other hand, similar to \eqref{eq:zeta-esti}, we have an estimate for the integral over $(\partial\Omega)^{\epsilon_*}$ as follows
	\begin{equation} \nonumber
	|\int_{(\partial\Omega)^{\epsilon_*}} p T_n^i| \leq \sigma_{\max} c_s \norm{p}_{C(\overline{(\partial\Omega)^{\epsilon_*}})} \norm{y_n - \bar y}_{C(\overline{(\partial\Omega)^{ \epsilon_*}} )}^2,
	\end{equation} 
	which, along with the estimate in \eqref{eq:yn-H1-convergence-general}, infers
	\begin{equation} \nonumber
		\limsup_{n \to \infty} \frac{1}{t_n^2} |\int_{(\partial\Omega)^{\epsilon_*}} p T_n^i| \leq \sigma_{\max} c_sC^2  \norm{p}_{C(\overline{(\partial\Omega)^{\epsilon_*}})} < \epsilon.
	\end{equation} 
	Since $\epsilon >0$ is arbitrary, there holds
	\begin{equation} \nonumber
	\limsup_{n \to \infty} \frac{1}{t_n^2} |\int_{(\partial\Omega)^{\epsilon_*}} p T_n^i| = 0 = - \frac{1}{2} \int_{\partial\Omega} \frac{p z^2}{|\nabla \bar y|} \dH^{N-1}.
	\end{equation}  
	Combining this with \eqref{eq:Tn-zero-lim-nonboundary} yields \eqref{eq:Q-component-explicit-general} for the situation where $\tau_i =0$.
	
	We have shown \eqref{eq:Q-component-explicit-general}.
	
	\medskip 
	
	{%text}
	It remains to show \eqref{eq:Q-title-explicit} and \eqref{eq:Q-explicit}. To this end, taking $h \in L^\barr(\Omega)$ and $\{t_n \} \in c_0^+$  arbitrarily, we set  $y_n:= S(\bar u + t_n h)$ and $z := S'(\bar u)h$. Thanks to \cref{prop:control-to-state,prop:G-diff-control2state}, $y_n$ and $z$ satisfy \eqref{eq:yn-H1-convergence-general}. 
	From this and the definitions of $\tilde{Q}$ and $Q$, we have  \eqref{eq:Q-title-explicit} and \eqref{eq:Q-explicit}, due to \eqref{eq:Q-component-explicit-general} and \eqref{eq:zeta-func-expression}.
}%
\end{proof}
%\begin{remark}
%	\label{rem:yn-C1-limit}
%	Let us comment on the limits in \eqref{eq:yn-C1-convergence-general}. These limits are automatically fulfilled under some additional regularity conditions of  \eqref{eq:state}. Namely, if  one of the following conditions is valid:
%	\begin{enumerate}[label=(\roman*)]
%		\item \label{item:boundary-smooth} the coefficients $a_{ij} \in C^{0,1}(\overline\Omega)$ and the boundary  $\partial\Omega \in C^{1,1}$ or
%		\item \label{item:convex-polygonal} the coefficients $a_{ij} \in C^{0,1}(\overline\Omega)$, the dimension $N=2$, and $\Omega$ is a convex polygonal domain,
%	\end{enumerate}
%	then, the image $S(U_{ad})$ is a bounded set in $W^{2,p}(\Omega)$ for some $p >N$; see, e.g. \cite[Thm.~2.2 \ Rem.~2.4]{CasasHerzogWachsmuth2012}. From this and the compact embedding $W^{2,p}(\Omega) \Subset C^1(\overline\Omega)$, we deduce from the uniqueness of solutions to \eqref{eq:state} that
%	\begin{equation} \nonumber
%		S(\bar u + t_n h_n) \to S(\bar u) \quad \text{in } C^1(\overline\Omega) \quad \text{and } \frac{S(\bar u + t_n h_n) - S(\bar u)}{t_n} \to S'(\bar u;h) \quad \text{in } C(\overline\Omega)
%	\end{equation} 
%	for all $\{t_n\} \in c_0^+$ and $\{h_n\} \subset L^\barr(\Omega)$ satisfying $\bar u + t_n h_n \in U_{ad}$ and $h_n \rightharpoonup h$ in $L^\barr(\Omega)$.
%\end{remark}

%%% The 2nd OSs in the explicit form
We now combine \cref{thm:2nd-OS-nec,thm:2nd-OS-suf} with \cref{thm:Q-explicit-form} to arrive at the following explicit second-order necessary and sufficient conditions for \eqref{eq:P}, which are the main results of this paper.
\begin{theorem}[explicit second-order necessary optimality conditions]
	\label{thm:2nd-OS-nec-explicit}
	Let $\bar u$ be a local minimizer of \eqref{eq:P} such that the associated state $\bar y:= S(\bar u)$ fulfills  \eqref{eq:structural-restricted} and \eqref{eq:boundary-intersection-empty}.  
	{%text}
	Then there exist a unique adjoint state $\bar p \in H^1_0(\Omega) \cap C(\overline\Omega)$ and a unique multiplier $\bar \lambda \in \partial j(\bar u)$ satisfying
}%
	\eqref{eq:1st-OS}  for $\chi(\cdot) = \1_{\{ \bar y \notin E_f \}}(\cdot) f'(\bar y(\cdot))$. Moreover, the following second-order necessary optimality condition holds:
	\begin{multline*}
		Q(\bar u, \bar p; h) =  \int_\Omega[ \frac{\partial^2 L}{\partial y^2}(x, \bar y) (S'(\bar u) h)^2 + \nu h^2 ] \dx - \int_\Omega \1_{\{ \bar y \notin E_f \}} \bar p f''_{yy}( \bar y)(S'(\bar u) h)^2 \dx  \\
		\begin{aligned}[t]
		+ \sum_{i=1}^K \sigma_i \int_{ \{ \bar y = \tau_i \} } \frac{\bar p (S'(\bar u)h)^2}{|\nabla \bar y|} \dH^{N-1} \geq 0  \qquad\text{for all }h\in \mathcal{C}_{L^\barr(\Omega)}({U}_{ad};\bar u).
		\end{aligned} 
	\end{multline*}
\end{theorem}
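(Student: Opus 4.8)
The plan is to obtain the assertion as a synthesis of two results already established: the abstract second-order necessary condition of \cref{thm:2nd-OS-nec} and the explicit evaluation of the curvature functional in \cref{thm:Q-explicit-form}. No genuinely new estimate is required; the work is in checking that the hypotheses of both results hold for the present data.

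First I would verify that \cref{thm:2nd-OS-nec} applies. The restricted structural condition \eqref{eq:structural-restricted} implies the structural assumption \eqref{eq:structure-non-diff} (see, e.g. \cite[Lem.~3.2]{DeckelnichHinze2012}); in particular $\lambda^N(\{\bar y \in E_f\}) = 0$, so by \cref{prop:G-diff-control2state} the operator $S$ is G\^{a}teaux-differentiable at $\bar u$. Consequently \cref{thm:2nd-OS-nec} together with \cref{cor:projections} produces the unique adjoint state $\bar p \in H^1_0(\Omega) \cap C(\overline\Omega)$ and multiplier $\bar\lambda \in \partial j(\bar u)$ satisfying \eqref{eq:1st-OS} with $\chi = \1_{\{\bar y \notin E_f\}} f'(\bar y)$, and yields the abstract inequality $Q(\bar u,\bar p;h) \geq 0$ for every $h \in \mathcal{C}(U_{ad};\bar u)$.

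Second I would rewrite $Q(\bar u,\bar p;h)$ in explicit form. The restricted assumption \eqref{eq:structural-restricted} and the distance condition \eqref{eq:boundary-intersection-empty} are exactly the standing hypotheses of \cref{thm:Q-explicit-form}, so it remains only to check the $C^1$-convergence \eqref{eq:yn-C1-convergence-general} for the constant direction $h$, namely $S(\bar u + t_n h) \to \bar y$ in $C^1(\overline\Omega)$ and $t_n^{-1}(S(\bar u + t_n h) - \bar y) \to S'(\bar u)h$ in $C(\overline\Omega)$ for each $\{t_n\} \in c_0^+$. Under hypothesis \ref{item:boundary-smooth} or \ref{item:convex-polygonal} the regularity invoked in \cref{rem:yn-C1-limit} shows that $S$ maps $L^2(\Omega)$-bounded sets into $W^{2,p}(\Omega)$-bounded sets for some $p > N$; since $\{\bar u + t_n h\}$ is bounded in $L^2(\Omega)$, the sequence $\{S(\bar u + t_n h)\}$ is precompact in $C^1(\overline\Omega)$ via the compact embedding $W^{2,p}(\Omega) \Subset C^1(\overline\Omega)$, and combining this with the strong $C(\overline\Omega)$-limits furnished by \cref{prop:control-to-state} and the uniqueness of limits delivers \eqref{eq:yn-C1-convergence-general}. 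The ``in particular'' part of \cref{thm:Q-explicit-form} then gives the explicit formula \eqref{eq:Q-explicit} for $Q(\bar u,\bar p;h)$. Substituting this into $Q(\bar u,\bar p;h) \geq 0$ is precisely the claimed inequality.

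I expect the only delicate point to be this last verification, since \cref{rem:yn-C1-limit} is literally stated for admissible increments $\bar u + t_n h_n \in U_{ad}$, whereas here $\bar u + t_n h$ need not be admissible. Two remedies are available: either note that the cited elliptic regularity depends only on the $L^2(\Omega)$-boundedness of the argument of $S$ (which holds because $t_n \to 0^+$), or approximate $h$ by the admissible directions $h_k$ of \cref{lem:key-critical-dense} and pass to the limit using the weak lower semicontinuity of $Q$ from \cref{prop:wlsc-key-term}. Either route closes the argument, all remaining steps being direct applications of the quoted results.
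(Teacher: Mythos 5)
Your overall route is exactly the paper's: the paper gives no separate argument for \cref{thm:2nd-OS-nec-explicit} beyond combining \cref{thm:2nd-OS-nec}, \cref{thm:Q-explicit-form} and \cref{rem:yn-C1-limit}, and your first two steps carry this combination out correctly, including the observation that \eqref{eq:structural-restricted} implies \eqref{eq:structure-non-diff} and hence the applicability of \cref{thm:2nd-OS-nec} and \cref{cor:projections}. You have also correctly spotted the one genuine subtlety, which the paper itself glosses over: \cref{rem:yn-C1-limit} furnishes the limits \eqref{eq:yn-C1-convergence-general} only for increments $\bar u + t_n h_n \in U_{ad}$, whereas the ``in particular'' part of \cref{thm:Q-explicit-form} needs them for $y_n = S(\bar u + t_n h)$ with $h \in \mathcal{C}(U_{ad};\bar u)$ arbitrary, hence possibly unbounded and non-admissible.

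However, your two remedies are not on equal footing. The first one is false and must be discarded: the $W^{2,p}(\Omega)$-bound with $p>N$ invoked in \cref{rem:yn-C1-limit} requires the right-hand side of the state equation to be bounded in $L^{p}(\Omega)$ with $p>N$ (which holds on $U_{ad}\subset L^\infty(\Omega)$), not merely in $L^2(\Omega)$; for $h\in L^2(\Omega)$ and $N\in\{2,3\}$ one only obtains an $H^2(\Omega)$-bound, and $H^2(\Omega)$ does not embed into $C^1(\overline\Omega)$, so the first limit in \eqref{eq:yn-C1-convergence-general} cannot be reached this way. The second remedy is the right one, but as stated it is incomplete: weak lower semicontinuity by itself transfers nonnegativity in the wrong direction (from $h_k\rightharpoonup h$ and $Q(\bar u,\bar p;h_k)\geq 0$ one gets $Q(\bar u,\bar p;h)\leq\liminf_k Q(\bar u,\bar p;h_k)$, which says nothing about the sign of $Q(\bar u,\bar p;h)$). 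The assembly that works is: the approximants $h_k$ of \cref{lem:key-critical-dense} are bounded and yield admissible increments for small $t$, so \cref{rem:yn-C1-limit} and \cref{thm:Q-explicit-form} give $Q(\bar u,\bar p;h_k)=Q_{\mathrm{exp}}(h_k)$, where $Q_{\mathrm{exp}}$ denotes the explicit quadratic form of the theorem; \cref{thm:2nd-OS-nec} gives $Q_{\mathrm{exp}}(h_k)=Q(\bar u,\bar p;h_k)\geq 0$; and since $h_k\to h$ in $L^2(\Omega)$ implies $S'(\bar u)h_k\to S'(\bar u)h$ in $C(\overline\Omega)$ while $\Ha^{N-1}(\{\bar y=\tau_i\})<\infty$ and $\1_{\{\bar y\notin E_f\}}\bar p f''_{yy}(\bar y)$ is bounded, the form $Q_{\mathrm{exp}}$ is continuous along this sequence, whence $Q_{\mathrm{exp}}(h)\geq 0$. (Equivalently, combine $Q(\bar u,\bar p;h)\geq 0$ with the lower-semicontinuity estimate $Q(\bar u,\bar p;h)\leq\liminf_k Q_{\mathrm{exp}}(h_k)=Q_{\mathrm{exp}}(h)$.) Note finally that this repaired argument delivers the explicit \emph{inequality}, which is the substantive assertion, but not the literal \emph{identity} $Q(\bar u,\bar p;h)=Q_{\mathrm{exp}}(h)$ for unbounded $h\in\mathcal{C}(U_{ad};\bar u)$ that the theorem formally states; that identity still hinges on verifying \eqref{eq:yn-C1-convergence-general} for non-admissible increments, which is precisely the point left unchecked in the paper.
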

\begin{theorem}[explicit second-order sufficient optimality conditions] 
	\label{thm:2nd-OS-suf-explicit}
	{%text}
		Assume that $N \in \{{1,2,3}\}$.	
}%
	Let $\bar u$ be an admissible point of \eqref{eq:P} for which conditions \eqref{eq:structural-restricted} and \eqref{eq:boundary-intersection-empty} are satisfied by $\bar y:= S(\bar u)$. 
	Assume further that there exist an adjoint state $\bar p \in H^1_0(\Omega) \cap C(\overline\Omega)$ and a multiplier $\bar \lambda \in \partial j(\bar u)$ satisfying \eqref{eq:1st-OS}  for $\chi(\cdot) = \1_{\{ \bar y \notin E_f \}}(\cdot) f'(\bar y(\cdot))$. If the following explicit second-order sufficient condition is verified:
	\begin{multline*}
		Q(\bar u, \bar p; h) =  \int_\Omega[ \frac{\partial^2 L}{\partial y^2}(x, \bar y) (S'(\bar u) h)^2 + \nu h^2 ] \dx - \int_\Omega \1_{\{ \bar y \notin E_f \}} \bar p f''_{yy}( \bar y)(S'(\bar u) h)^2 \dx  \\
		\begin{aligned}[t]
		+ \sum_{i=1}^K \sigma_i \int_{ \{ \bar y = \tau_i \} } \frac{\bar p (S'(\bar u)h)^2}{|\nabla \bar y|} \dH^{N-1} > 0  \qquad\text{for all }h\in \mathcal{C}_{L^2(\Omega)}({U}_{ad};\bar u)\setminus \{0\},
	\end{aligned} 
	\end{multline*}
	then the quadratic growth condition \eqref{eq:quadratic-grownth} is valid for some positive constants $c, \rho >0$.
\end{theorem}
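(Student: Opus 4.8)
The plan is to obtain this theorem as a corollary of the abstract sufficient condition \cref{thm:2nd-OS-suf}, after rewriting the curvature $Q$ in the explicit form supplied by \cref{thm:Q-explicit-form}. The first step is to check that all hypotheses of \cref{thm:2nd-OS-suf} are in force. The restricted structural condition \eqref{eq:structural-restricted} implies the weaker assumption \eqref{eq:structure-non-diff}: since $\bar y \in C^1(\overline\Omega)$ with $|\nabla \bar y| \neq 0$ on each level set $\{\bar y = \tau_i\}$, a coarea/implicit-function estimate (as in \cite[Lem.~3.2]{DeckelnichHinze2012}) yields the linear volume bound $\sum_{i} \lambda^N(\{|\bar y - \tau_i| < \epsilon\}) \leq c_s \epsilon$. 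In particular $\lambda^N(\{\bar y \in E_f\}) = 0$, so $S$ is G\^{a}teaux-differentiable at $\bar u$ by \cref{prop:G-diff-control2state}, and the first-order system \eqref{eq:1st-OS} assumed in the statement, with $\chi = \1_{\{\bar y \notin E_f\}} f'(\bar y)$, provides exactly the adjoint state $\bar p$ and multiplier $\bar\lambda$ demanded by \cref{thm:2nd-OS-suf}.

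The second step is to identify the displayed functional with $Q(\bar u, \bar p; \cdot)$. Under either regularity hypothesis \ref{item:boundary-smooth} or \ref{item:convex-polygonal}, \cref{rem:yn-C1-limit} guarantees that $S(U_{ad})$ is bounded in $W^{2,p}(\Omega)$ for some $p > N$; combining the compact embedding $W^{2,p}(\Omega) \Subset C^1(\overline\Omega)$ with the uniqueness of solutions to \eqref{eq:state} gives the convergences $S(\bar u + t_n h) \to \bar y$ in $C^1(\overline\Omega)$ and $t_n^{-1}(S(\bar u + t_n h) - \bar y) \to S'(\bar u)h$ in $C(\overline\Omega)$ for every $h \in L^2(\Omega)$ and every $\{t_n\} \in c_0^+$. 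These are precisely the limits \eqref{eq:yn-C1-convergence-general} required to invoke \cref{thm:Q-explicit-form}, which then yields the explicit form \eqref{eq:Q-explicit}. Hence the hypothesis of the present theorem is nothing but the abstract sufficient condition $Q(\bar u, \bar p; h) > 0$ for all $h \in \mathcal{C}(U_{ad}; \bar u) \setminus \{0\}$, i.e.\ condition \eqref{eq:2nd-OS-suff}.

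The third step is to apply \cref{thm:2nd-OS-suf}, which immediately delivers constants $c, \rho > 0$ for which the quadratic growth \eqref{eq:quadratic-grownth} holds. Most of the work has already been carried out upstream --- the contradiction argument with a weakly convergent minimizing sequence, the invariance identity of \cref{lem:invariant}, and the coordinate-change computation behind \cref{lem:curvature-explicit-form-one-dim} that produces the $\mathcal{H}^{N-1}$-surface integral. The only genuinely new point to verify here is the $C^1$-regularity input of \cref{rem:yn-C1-limit}, without which \cref{thm:Q-explicit-form} is inapplicable; this is where the extra hypotheses \ref{item:boundary-smooth}/\ref{item:convex-polygonal} on $\Omega$ and the coefficients $a_{ij}$ are spent. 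I expect no further obstacle, since the explicit surface-integral representation of $\tilde{Q}$ computed along the fixed direction $h$ is exactly the quantity that enters the inequality once \cref{lem:invariant} reduces any weakly convergent sequence $\{h_n\}$ to its weak limit $h$.
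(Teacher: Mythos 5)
Your overall architecture coincides with the paper's one-line derivation (verify the hypotheses of \cref{thm:2nd-OS-suf}, identify the displayed functional with $Q(\bar u,\bar p;\cdot)$ via \cref{thm:Q-explicit-form} and \cref{rem:yn-C1-limit}, then conclude), and your first step — that \eqref{eq:structural-restricted} implies \eqref{eq:structure-non-diff} and hence G\^{a}teaux-differentiability of $S$ at $\bar u$ — is exactly right. However, your second step overstates what \cref{rem:yn-C1-limit} provides, and as written it fails. The remark does \emph{not} give the convergences \eqref{eq:yn-C1-convergence-general} ``for every $h\in L^2(\Omega)$'': the $W^{2,p}$-bound holds only for the image $S(U_{ad})$, and the remark accordingly restricts to sequences with $\bar u+t_nh_n\in U_{ad}$. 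A critical direction $h\in\mathcal{C}(U_{ad};\bar u)$ is merely an $L^2$-function; when $h$ is unbounded, $\bar u+t_nh$ leaves $U_{ad}$, and (say for $N=3$ under hypothesis \ref{item:boundary-smooth}) the state $S(\bar u+t_nh)$ enjoys only $H^2$-regularity, which does not embed into $C^1(\overline\Omega)$. Hence the first limit in \eqref{eq:yn-C1-convergence-general} can fail, the ``in particular'' part of \cref{thm:Q-explicit-form} is not applicable to the constant sequence $\{h\}$, and the wholesale identification of $Q(\bar u,\bar p;h)$ with the surface-integral expression for \emph{all} $h\in\mathcal{C}(U_{ad};\bar u)$ — on which your reduction of the stated hypothesis to the abstract condition \eqref{eq:2nd-OS-suff} rests — is unjustified.

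The gap is repairable, and the repair is presumably what the paper intends by ``combining'' the cited results: do not use \cref{thm:2nd-OS-suf} as a black box, but re-run its contradiction argument. There the relevant directions are $h_n=(u_n-\bar u)/t_n$ with $u_n\in U_{ad}$, so $\bar u+t_nh_n\in U_{ad}$ and \cref{rem:yn-C1-limit} genuinely applies; together with the weak Hadamard differentiability \eqref{eq:weak-strong-Hadamard-app} this verifies \eqref{eq:yn-C1-convergence-general} for $y_n:=S(\bar u+t_nh_n)$ and $z:=S'(\bar u)h$, where $h$ is the weak limit of $h_n$. Then the \emph{first} part of \cref{thm:Q-explicit-form}, namely \eqref{eq:Q-component-explicit-general} combined with \eqref{eq:zeta-func-expression}, evaluates $\underline{Q}(\bar u,\bar p;\{t_n\},\{h_n\})$ directly as $\tfrac12\sum_{i=1}^K\sigma_i\int_{\{\bar y=\tau_i\}}\bar p\,(S'(\bar u)h)^2/|\nabla\bar y|\dH^{N-1}$, without ever passing through constant sequences or \cref{lem:invariant}. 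Consequently the inequality \eqref{eq:contradiction2} holds with $Q$ replaced by the explicit expression in the statement; the hypothesis then forces $h=0$, and inserting $h=0$ gives $0\geq\nu/2>0$, a contradiction. This closes the proof while only invoking the $C^1$-limits along admissible controls, which is all the regularity assumptions \ref{item:boundary-smooth}/\ref{item:convex-polygonal} actually furnish.
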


\begin{remark}
	\label{rem:bang-bang-problem}
	For bang-off-bang optimal control problems, i.e., $\nu =0$, we expect to derive second-order necessary and sufficient conditions based on the curvature functional $Q$ and the critical cone $\mathcal{C}_{L^\barr(\Omega)}(U_{ad};\bar u)$ introduced in \eqref{eq:critical-cone}. However, in this situation, the critical cone  $\mathcal{C}_{L^\barr(\Omega)}(U_{ad};\bar u)$  
	{%text}
	is frequently reduced to 
}%
	$\{0\}$; see, e.g. \cite[Rem.~3.2 \& Prop.~3.3]{Casas2012} and thus the second-order optimality conditions as in \cref{thm:2nd-OS-nec,thm:2nd-OS-suf} do not provide any useful information. 	
	On the other hand, following the approach used in \cite{ChristofWachsmuth2018} and \cite{WachsmuthWachsmuth2022}, where the problems with smooth PDEs were investigated, we might extend the functional $Q$ on the space of signed finite Radon measures $\mathcal{M}(\Omega)$, the dual space of the space of all continuous functions vanishing on the boundary $\partial \Omega$. 
	It is noted that, in order to derive the second-order optimality conditions for problems considered in \cite{ChristofWachsmuth2018} and \cite{WachsmuthWachsmuth2022}, an additional structural assumption imposed on the adjoint state $\bar p$ is required and guarantees a nondegeneracy condition; see, \cite[Ass.~6.6]{ChristofWachsmuth2018} and \cite[Ass.~ 4.12]{WachsmuthWachsmuth2022}. However, in our approach, we do not impose any structural assumption on the adjoint state $\bar p$. 
\end{remark}

%%% Lipschitz stability 
\section{Stability analysis with respect to $\kappa \to 0$} \label{sec:stability}
In this section, we use the notation $u^*_\kappa$ and $u^*$ for any minimizer of \eqref{eq:P} when $\kappa >0$ and $\kappa =0$, respectively.
We also set $y^*_\kappa := S(u^*_\kappa)$ for $\kappa >0$ and $y^* := S(u^*)$. 
We shall study the behavior of a sequence of local minimizers  $u^*_\kappa$ of \eqref{eq:P} as $\kappa \to 0^+$. 

%%% Weak-* and strongly convergence of subsequence
\begin{theorem}
	\label{thm:stability-convergence}
	Let $\{ u^*_\kappa \}_{\kappa >0}$ be a sequence of local minimizers to \eqref{eq:P}. 
	Then, there exists a subsequence 
	{%text}
	$\{u^*_{\kappa_n}\}_{n \geq 1}$ weakly-$*$ converging in $L^\infty(\Omega)$ to an element $u^* \in U_{ad}$.
}%
	Moreover, there hold
	\begin{enumerate}[label=(\roman*)]
		\item \label{item:weak-convergence} $u^*$ is a solution to $(P_0)$;
		\item \label{item:Linfty-convergence} $u^*_{\kappa_n} \to u^*$ in $L^\infty(\Omega)$.
	\end{enumerate}
\end{theorem}
\begin{proof}
	The existence of a subsequence $\{u^*_{\kappa_n}\}_{n \geq 1}$  and its weak-$*$ convergence to $u^* \in U_{ad}$ follows from the boundedness in $L^\infty(\Omega)$ of the admissible set $U_{ad}$. Let us show \ref{item:weak-convergence}. To this end, taking any $u \in U_{ad}$ yields
	\begin{equation}
		\label{eq:minimum-esti}
		\begin{aligned}
			J_0(u^*) &\leq \liminf\limits_{n \to \infty} J_0(u^*_{\kappa_n}) \leq \liminf\limits_{n \to \infty} [ J_0(u^*_{\kappa_n}) + \kappa_n \norm{u^*_{\kappa_n}}_{L^1(\Omega)} ] \\
			&= \liminf\limits_{n \to \infty} J_{\kappa_n}(u^*_{\kappa_n}) \leq  \liminf\limits_{n \to \infty} J_{\kappa_n}(u) = J_0(u).
		\end{aligned}
	\end{equation}	
	This shows \ref{item:weak-convergence}. To prove \ref{item:Linfty-convergence}, we now use \cref{thm:1st-OS} and \cref{cor:projections}. 
	According to \cref{thm:1st-OS}, there exist $\bar p := p^*_{\kappa_n}$, $\chi := \chi^*_{\kappa_n} \in L^\infty(\Omega)$, and $\bar \lambda := \lambda^*_{\kappa_n} \in L^\infty(\Omega)$ with $\lambda^*_{\kappa_n} \in \partial j(u^*_{\kappa_n})$ satisfying \eqref{eq:1st-OS} for $\bar u:= u_{\kappa_n}^*$ and $\bar y := y^*_{\kappa_n}$.
	In particular, we have \eqref{eq:adjoint-OS}, \eqref{eq:normal-OS} and \eqref{eq:projection-u}, that is, there hold
	\begin{align}
		& A^* p^*_{\kappa_n} +  \chi^*_{\kappa_n}  p^*_{\kappa_n} = \frac{\partial L}{\partial y}(x, y^*_{\kappa_n}) \, \text{in } \Omega, \quad 	p^*_{\kappa_n} =0 \, \text{on } \partial\Omega, \label{eq:adjoint-OS-stability}  \\
		& \int_\Omega p^*_{\kappa_n} u^*_{\kappa_n} + \nu |u^*_{\kappa_n}|^2 + \kappa_n \lambda^*_{\kappa_n} u^*_{\kappa_n} dx \leq \int_\Omega (p^*_{\kappa_n}  + \nu u^*_{\kappa_n} + \kappa_n \lambda^*_{\kappa_n})u dx \, \forall  u \in U_{ad}, \label{eq:variational-ine}
	\end{align}
	and
	\begin{equation}
		\label{eq:projection-u-stability}
		u^*_{\kappa_n}(x) = \proj_{[\alpha,\beta]}( - \frac{1}{\nu} (p^*_{\kappa_n}(x) + \kappa_n \lambda^*_{\kappa_n}(x)) ) \quad \text{for a.a. } x \in \Omega.
	\end{equation}
	Since $u^*_{\kappa_n}$ weakly-$*$ converges to $u^*$, the sequence $\{y^*_{\kappa_n} \}$ strongly converges to $y^*$ in $H^1_0(\Omega) \cap C(\overline\Omega)$.
	Moreover, thanks to the fact that $|\lambda^*_{\kappa_n}| \leq 1$ a.a. in $\Omega$, there then exists a subsequence, denoted in the same way, such that
	$\lambda^*_{\kappa_n} \rightharpoonup^* \lambda^*$ for some $\lambda^* \in L^\infty(\Omega)$.  From the weak-to-strong convergence of solutions to \eqref{eq:adjoint-OS-stability}, there holds
	\begin{equation}
		\label{eq:adjoint-state-limit}
		p^*_{\kappa_n} \to p^* \quad \text{in } H^1_0(\Omega) \cap C(\overline\Omega)
	\end{equation}
	for some $p^* \in H^1_0(\Omega) \cap C(\overline\Omega)$. 	
	Letting now $\kappa_n \to 0$ in \eqref{eq:variational-ine} and using the weak lower semicontinuity of the $L^2$-norm yield
	\begin{equation} \label{eq:variational-ine-u-star}
		\int_\Omega (p^* + \nu u^*)(u - u^*) dx \geq 0 \quad \text{for all } u \in U_{ad},
	\end{equation} 
	or, equivalently,
	$% \begin{equation} \nonumber
		u^*(x) = \proj_{[\alpha,\beta]}\left( - \frac{1}{\nu} p^*(x) \right) \, \text{for a.a. } x \in \Omega.
	$ %\end{equation} 
	From this and \eqref{eq:projection-u-stability}, we deduce from the Lipschitz continuity with modulus $1$ of the projection mapping that
	\begin{equation} \nonumber
		|u^*_{\kappa_n}(x) - u^*(x)| \leq \frac{1}{\nu}[|p^*_{\kappa_n}(x) - p^*(x)| + \kappa_n |\lambda^*_{\kappa_n}(x) | \leq \frac{1}{\nu}[|p^*_{\kappa_n}(x) - p^*(x)| + \kappa_n   ]
	\end{equation} 
	for a.a. $x \in \Omega$. Combining this with \eqref{eq:adjoint-state-limit} gives \ref{item:Linfty-convergence}.
\end{proof}

%%% L2 strong convergence of full sequence
Let us prove the converse result on the  $L^2(\Omega)$- convergence.
\begin{theorem}
	\label{thm:stability-convergence-inverse}
	Assume that $u^*$ is a 
	{%text}
	strict local minimizer of 
}%
	$(P_0)$ in the $L^2(\Omega)$-sense, that is, there exists an $\rho >0$ such that
	\begin{equation}
		\label{eq:strict-minimizer}
		J_0(u^*) < J_0(u) \quad \text{for all } u \in U_{ad} \cap \overline B_{L^2(\Omega)}(u^*, \rho) \backslash \{ u^*\}.
	\end{equation}
	%%%Assume further that $\lambda^N(\{y^* \in E_f \}) = 0$. 
	Then there exists a sequence $\{ u^*_{\kappa}\}$ of local minima of \eqref{eq:P} that converges strongly to $u^*$ in $L^2(\Omega)$.
\end{theorem}
\begin{proof}
	For any $\kappa >0$, we consider the problems
	\begin{equation}
		\label{eq:auxiliary-problems}
		\tag{P$_{\kappa}^\rho$}
		\min\{ J_\kappa(u) \mid u \in U_{ad} \cap \overline B_{L^2(\Omega)}(u^*, \rho)  \}.
	\end{equation}
	Similar to \cref{prop:existence}, for every $\kappa >0$, \eqref{eq:auxiliary-problems} admits at least one solution $u^*_\kappa$. 
	The boundedness of $U_{ad}$ in $L^\infty(\Omega)$  implies that there exists a subsequence of $\{u^*_\kappa\}_{\kappa >0}$ which weakly-$*$ converges to some $\tilde{u} \in U_{ad} \cap \overline B_{L^2(\Omega)}(u^*, \rho)$. Similar to \eqref{eq:minimum-esti}, we have $J_0(\tilde{u}) \leq J_0(u)$ for all $u \in U_{ad} \cap \overline B_{L^2(\Omega)}(u^*, \rho)$ and thus $\tilde{u}$ is a local minimizer of $(P_0)$. Since $u^*$ is a unique solution to \eqref{eq:strict-minimizer}, one has $\tilde{u} = u^*$ and thus the full sequence $\{u^*_\kappa\}_{\kappa>0}$ weakly-$*$ converges to $u^*$.   
	On the other hand, analogous to \cref{thm:1st-OS}, 
	there exist $\bar p := p^*_{\kappa}$, $\chi := \chi^*_{\kappa} \in L^\infty(\Omega)$, and $\bar \lambda := \lambda^*_{\kappa} \in L^\infty(\Omega)$ with $\lambda^*_{\kappa} \in \partial j(u^*_{\kappa})$ satisfying \eqref{eq:adjoint-OS}, \eqref{eq:Clarke-multiplier} for $\bar u:= u^*_\kappa, \bar y:= y^*_\kappa$ and
	\begin{equation} \label{eq:variational-ine-stability}
		\int_\Omega (p^*_{\kappa} + \nu u^*_\kappa + \kappa \lambda^*_\kappa)(u - u^*_\kappa ) \geq 0 \quad \text{for all } u \in U_{ad} \cap \overline B_{L^2(\Omega)}(u^*, \rho),
	\end{equation}
	or, equivalently,
	\[
		\int_\Omega (\frac{1}{\nu}(-p^*_{\kappa}  - \kappa \lambda^*_\kappa) -  u^*_\kappa)(u - u^*_\kappa ) \leq 0 \quad \text{for all } u \in U_{ad} \cap \overline B_{L^2(\Omega)}(u^*, \rho).
	\]
	The Hilbert projection theorem then implies that 
	\begin{equation}
		\label{eq:projection-stability}
		u^*_\kappa = \proj_{U_{ad} \cap \overline B_{L^2(\Omega)}(u^*, \rho)}\frac{1}{\nu}(-p^*_{\kappa}  - \kappa \lambda^*_\kappa).
	\end{equation}
	Moreover, similar to the arguments in the proof of \cref{thm:stability-convergence}, there exists a subsequence $\{p^*_{\kappa_n}\}$ that satisfies the limit \eqref{eq:adjoint-state-limit} for some $p^* \in H^1_0(\Omega) \cap C(\overline\Omega)$. Plugging $\kappa := \kappa_n$ into \eqref{eq:variational-ine-stability} and thus letting $n \to \infty$ yield
	\[
		\int_\Omega (p^* + \nu u^*)(u-u^*) \geq 0 \quad \text{for all } u \in U_{ad} \cap \overline B_{L^2(\Omega)}(u^*, \rho)
	\]
	(see arguments showing \eqref{eq:variational-ine-u-star}). From the Hilbert projection theorem, we then deduce
	\begin{equation*}
		%\label{eq:projection-stability-u-star}
		u^* = \proj_{U_{ad} \cap \overline B_{L^2(\Omega)}(u^*, \rho)}\frac{1}{\nu}(-p^*).
	\end{equation*}
	Combining this with \eqref{eq:projection-stability} for $\kappa = \kappa_n$ yields
	\begin{align*}
		\norm{u^*_{\kappa_n} - u^*}_{L^2(\Omega)} \leq \frac{1}{\nu}[\norm{p^*_{\kappa_n} - p^*}_{L^2(\Omega)} + \kappa_n \norm{\lambda^*_{\kappa_n}}_{L^2(\Omega)}] \to 0, 
	\end{align*}
	due to the limit \eqref{eq:adjoint-state-limit} and the fact that $|\lambda_{\kappa_n}^*| \leq 1$ a.a. in $\Omega$ (see \eqref{eq:subderivative-j-formulation}). 
	We have just shown that the subsequence $\{u^*_{\kappa_n}\}$ strongly converges to $u^*$ in $L^2(\Omega)$. We then have $\norm{u^*_\kappa - u^*}_{L^2(\Omega)} \to 0$ by using the subsequence-subsequence argument. Consequently, $\norm{u^*_\kappa - u^*}_{L^2(\Omega)} < \rho$ for $\kappa$ small enough and thus $u^*_\kappa$ is a local minimizer of \eqref{eq:P}.
\end{proof}

%%% Lipschitz stability when kappa tends to zero
\begin{theorem} \label{thm:stability-Lipschitz}
	Let $u^*$ be a local minimizer to $(P_0)$ and let $\{u^*_\kappa \}_{\kappa >0}$ be the sequence of local minimizers to \eqref{eq:P} as defined in \cref{thm:stability-convergence-inverse}. Assume that the following quadratic growth condition holds
	\begin{equation}
		\label{eq:quadratic-grownth-stability}
		J_0(u^*) + c \norm{u - u^*}_{L^2(\Omega)}^2 \leq J_0(u) \quad \text{for all } u \in {U}_{ad} \cap \overline B_{L^2(\Omega)}(u^*, \rho)
	\end{equation}
	with some constants $c>0, \rho >0$. Then, 
	{%text}
	there exist constants 
}%
	$\kappa_0 >0$ and $C>0$ such that
	\begin{equation}
		\label{eq:Lipschitz-L2}
		\norm{u^*_\kappa - u^*}_{L^2(\Omega)} \leq C \kappa \quad \text{for all } 0 < \kappa < \kappa_0.
	\end{equation}
	%%% We alse obtain the Lipschitz estimate in L^infty using the projection expression in as in Theorems 5.1 and 5.2 and under the condition that \lambda^N(y^* \in E_f) = 0.
\end{theorem}
\begin{proof}
	Thanks to \cref{thm:stability-convergence-inverse}, there exists a constant $\kappa_0 >0$ satisfying  $\norm{u^*_\kappa - u^*}_{L^2(\Omega)} < \rho$ for all $0 < \kappa < \kappa_0$.  The quadratic growth condition \eqref{eq:quadratic-grownth-stability}  then implies for all $\kappa \in (0, \kappa_0)$ that
	\begin{align*}
		J_0(u^*) + c \norm{u^*_\kappa - u^*}_{L^2(\Omega)}^2 + \kappa \norm{u^*_\kappa}_{L^1(\Omega)} & \leq J_0(u^*_\kappa) + \kappa \norm{u^*_\kappa}_{L^1(\Omega)} \\
		&  = J_\kappa(u^*_\kappa) \leq J_\kappa(u^*) = J_0(u^*) + \kappa \norm{u^*}_{L^1(\Omega)}.
	\end{align*}
	We then deduce for all $\kappa \in (0, \kappa_0)$ that
	\begin{align*}
		c \norm{u^*_\kappa - u^*}_{L^2(\Omega)}^2 & \leq \kappa (\norm{u^*}_{L^1(\Omega)} - \norm{u^*_\kappa}_{L^1(\Omega)} ) \\
		&   \leq \kappa \norm{u^* - u^*_\kappa}_{L^1(\Omega)} \leq \kappa (\lambda^N(\Omega))^{1/2} \norm{u^* - u^*_\kappa}_{L^2(\Omega)},
	\end{align*}
	where we have just employed the Cauchy--Schwarz inequality to get the last estimate. The desired inequality \eqref{eq:Lipschitz-L2} then follows. 
\end{proof}

\section{Conclusions}
We have derived second-order optimality conditions as well as the Lipschitz stability of solutions for non-smooth semilinear elliptic optimal control problems involving the $L^1$-norm of the control in the cost functional. The control-to-state operator is in general directionally differentiable only. By using the regularization approach, a system of first-order optimality conditions involving the Clarke subdifferential of the non-smooth nonlinear coefficient in the state equation has been obtained. 
Under the structural assumption imposed on the considered state, the control-to-state operator is then G\^{a}teaux-differentiable and the so-called curvature functional $Q$ for a component of the objective has been introduced. Based on this curvature and a Taylor-type expansion, the second-order necessary and sufficient optimality conditions have been derived by using some key lemmas. 
The difference between these conditions is minimal.
Moreover, under a more restrictive structural assumption on the mentioned state, an explicit formulation for the curvature functional $Q$ has been established. Consequently, explicit second-order necessary and sufficient optimality conditions have been presented. These explicit conditions could   be of great importance to  discretization error estimates for the numerical approximation of \eqref{eq:P}; see, e.g. \cite{ClasonNhuRosch2022,ClasonNhuRosch2023_part2}. Such estimates will be studied in a future work. Finally, the convergence of optimal solutions for vanishing the sparsity parameter has been shown and the associated convergence rate has also been proven to be $O(\kappa)$.

\bibliography{nonsmoothsemilinearsparse}% common bib file

%\printbibliography

%%% Statement & Declarations
\section*{Statements and Declarations}

\subsection*{Funding}
	{%
		This research is funded by Phenikaa University under grant number PU2022-1-A-14.}
	
\subsection*{Competing Interests}
	{%
		Financial interests: The authors have received travel  support from Phenikaa University.
	}
\subsection*{Author Contributions}
{%
	All authors contributed to the study conception and design. The first draft of the manuscript was written by the first author and all authors commented on previous versions of the manuscript. All authors read and approved the final manuscript.
}

\subsection*{Data Availability}
{%
	All data generated or analysed during this study are included in this published article [and its supplementary information files].
}
\end{document}